\newtheorem{theorem}{Theorem}[section] %%%[section]
\newtheorem{corollary}{Corollary}[section]
\newtheorem{lemma}{Lemma} [section] %%%[theorem]
\newtheorem{proposition}{Proposition}[section] %%%[theorem]
\newtheorem{assumption}{Assumption}[section]
\newtheorem{proof of lemma}{Proof of Lemma}
\newtheorem{proof of theorem}{Proof of Theorem}
\numberwithin{equation}{section}
\newtheorem{definition}{Definition}[section]
\theoremstyle{definition}
\newtheorem{example}{Example}[section]
\newcommand{\E}{\mathrm{E}}
\newcommand{\pr}{\mathrm{P}}
\newcommand{\R}{\mathbb{R}}
\newcommand{\var}{\operatorname{var}}
\newcommand{\bs}{\boldsymbol}
\numberwithin{equation}{section}
\theoremstyle{plain}
\begin{document}
\vspace*{0.15cm}
\begin{center}
{\Large\bf Asymptotic in a class of network models with an increasing sub-Gamma degree sequence}
\vskip 1.2\baselineskip
{\large Jing Luo$^{1,2}$,  \ Haoyu Wei$^{3*}$, \ Xiaoyu Lei$^{4}$, \ Jiaxin Guo$^{5}$}
\vskip 0.3\baselineskip
{\sl ${1}$. Department of Statistics, South-Central Minzu University, Wuhan, China.}
\vskip 0.2\baselineskip
{\sl ${2}$. School of Mathematics and Statistics, and Key Laboratory of Nonlinear Analysis and Applications (Ministry of Education), Central China Normal University, Wuhan, China.}
\vskip 0.2\baselineskip
{\sl ${3}$.  $^{*}$Department of Economics, University of California San Diego, La Jolla, USA.}
\vskip 0.3\baselineskip
{\sl ${4}$. Department of Statistics, University of Wisconsin–Madison, Madison, USA}
\vskip 0.2\baselineskip
{\sl ${5}$. University of Warwick, Coventry, CV4 7AL, UK.}

\vskip 0.1\baselineskip

\end{center}
\footnotetext[3]{$^*$Correspondence author. Email: h8wei@ucsd.edu (Haoyu Wei). Luo's research is partially supported by the Fundamental Research Funds for the Central Universities of South-Central Minzu University(Grant Number:CZQ22003) and by National Statistical Science Research of China (2022LY051) and by the Open Research Fund of Key Laboratory of Nonlinear Analysis \& Applications (Central China Normal University), Ministry of Education, P.R. China. \\
Jing Luo and Haoyu Wei are co-first authors. Email:jingluo2017@mail.scuec.edu.cn (Jing Luo), {xlei35@wisc.edu} (Xiaoyu Lei), phd19jg@mail.wbs.ac.uk (Jiaxin Guo).}
\vskip 1.5mm

\begin{abstract}
For the differential privacy under the sub-Gamma noise, we derive the asymptotic properties of a class of network models with binary values with a general link function. In this paper, we release the degree sequences of the binary networks under a general noisy mechanism with the discrete Laplace mechanism as a special case. We establish the asymptotic result including both consistency and asymptotically normality of the parameter estimator when the number of parameters goes to infinity in a class of network models. Simulations and a real data example are provided to illustrate asymptotic results.

\vskip 5 pt \noindent
\textbf{Key words}: Consistency and asymptotic normality; Network data; Differential privacy; sub-Gamma degree sequence\\

{\noindent \bf Mathematics Subject Classification:} 	62E20, 62F12.
\end{abstract}

\vskip 5 pt
%
%Running title:  Asymptotic in network model with a noisy degree sequence

\section{Introduction}
In network data analysis, the privacy of network data has received wide attention as the disclosed data is likely to contain sensitive information about individuals and their social relationships (sexual relationships, email exchanges, money transfers, etc). Analyzing this type of data can uncover valuable information that can be used to address many important social concerns, such as disease transmission, fraud detection, precision marketing, and more. In recent years, the urgent need to solve the problem of network privacy protection has led to the rapid development of algorithms for securely publishing network data or aggregating network data (see \cite{Zhou2008A}, \cite{Yuan2011Personalized}, \cite{Cutillo2010Privacy}, and \cite{Lu2014Exponential}). However, the unstructured characteristics of network data bring great challenges to the statistical inference (see \cite{fienberg2012brief}, \cite{Mosler2015}) . Data privacy protection is usually achieved by adding some noise to the data. The most typical method is differential privacy[\cite{Dwork2006}]. Due to the unstructured characteristics of network data and the data with noise, there are relatively few theoretical studies on statistical asymptotic behavior of noise-based network data.

The Erd\"{o}s-R\'{e}nyi model (see \cite{Erd1959On}) is generally acknowledged as one of the earliest
random binary graph models, in which each edge occurs with the same probability independent
of any other edge. However, it lacks the ability
to capture the extent of degree heterogeneity commonly associated with network data in
practice. To capture the heterozygous of nodes' degree, a class
of models have come into use for analyzing binary undirected networks network data.
The simplest network model is the binary network model, which mainly uses the degree sequence to capture the real network [\cite{albert2002}]. The $\beta$-model has undirected binary weighted, which is known for binary arrays whose distribution only depends on the row and column totals (see \cite{degree1britton2006generating,degree3bickel2011method,degree4zhao2012consistency,hillar2013maximum}). 
When the number of network nodes tends to infinity, many literature have studied the Maximum Likelihood Estimator (MLE) of the $\beta$-model(see \cite{chatterjee2011random}; \cite{degree2blitzstein2011sequential}; \cite{rinaldo2013maximum}). The asymptotic normality of the MLE of the $\beta$-model is further studied by \cite{yanxu2013}. \cite{Fan2002Connected} proposes another type of binary network model. This model is called log-linear model, where the edge probability
$p_{ij}$ between vertices $i$ and $j$ is $w_{i}w_{j}/(\sum_{k=1}^{n} w_{k})$ under the normalization constraint $w_{i}^{2}\leq (\sum_{k=1}^{n} w_{k}) $($i=1,...,n$), where $w_{i}$ is referred to as the weight of vertex $i$.
 Moreover, Olhede and Wolfe (2012) obtained the approximate value of the parameter estimation of a class of binary network in the case of limited sample network nodes. Until recent years,  \cite{Karwa2016} has given research on the asymptotic theory of adding discrete Laplace noise to network data.  However, when the general noise is added to a class of this network model, the asymptotic theory of parameter estimators is still unknown.

In this paper, we mainly study the asymptotic theory of a class of network models with sub-Gamma degree sequences. It is different from the method of adding noise to network data in \cite{Karwa2016},  \cite{luo2022asymptotic} and \cite{luo2021Ordered}. Therefore, they only considered the asymptotic theory of the parameters for the $\beta-$model in the case of Laplace noise. 
Here, we consider the general distribution of noise variables, and Laplace distribution is only a special case.
And then, our method does not need to deal with the degree sequence of noise, so we can directly use the degree sequence with noise to study the statistical inference of the network model. Furthermore,
the probability-mass or density function of the edge $a_{ij}$ only depends on the sum of $\alpha_i^{*}$ and $\alpha_j^{*}$, where $\alpha_i^{*}$ denotes the strength parameter of vertex $i$. These are the main contributions of this paper.

For the rest of this article, we state as follows. In Section 2, we give null models for undirected network data and sub-Gamma degree sequences. In Section 3, we give a uniform asymptotic result. In Section 4, we illustrate several applications of our main results. Summary and discussion are given in Section 5. Proofs are given in the Appendix.

\textbf{Notation}:
 For $0< q\leq\infty$,
we write $\| \beta\|_{q}:=(\sum_{i=1}^{p}|\beta_{i}|^{q})^{1/q}$ as the $\ell_{q}$-norm
of a $p-$dimensional vector $\beta$. If $q=\infty$, we have $\| \beta\|_{\infty}:=\max\limits_{i=1,...,p}|\beta_{i}|$. For a subset $C\subset \R^n$, let $C^0$ and $\overline{C}$ denote the interior and closure of $C$ in $\R^n$,
respectively. Let $\Omega(\mathbf{x}, r)$ denote the open ball $\{\mathbf{y}: \|\mathbf{x}-\mathbf{y}\|< r \}$,
and $\overline{\Omega(\mathbf{x}, r)}$ be its closure.

\section{Null models for undirected network data}\label{section 2}
\subsection{Several Models}
%Consider a directed graph $\mathcal{G}$ on $n \geq 2$ vertices labeled by $1,...,n$. Let $a_{i,j} \in \Omega$ is the weight of the directed edge from $i$ to $j$, where $\Omega = \{0,1\}$ is the set of all possible weight values and $A = (a_{i,j})$ be the adjacency matrix of $\mathcal{G}$.
Following \cite{yanxu2013}, we consider an undirected graph $\mathcal{G}_{n}$ on n ( $n \geq 2$ ) agents labeled by $1,...,n$. Let $a_{ij}\in\{0,1\}$ be the weight of the
undirected edge between $i$ and $j$. That is, if there is a link between $i$ and $j$, then $a_{ij}=1$;
otherwise, $a_{ij}=0$. Denote $A=(a_{ij})_{n \times n}$ as the symmetric adjacency matrix of $\mathcal{G}_{n}$.
We assume that there are no self-loops, i.e., $a_{ii}=0$. Define $d_{i}=\sum_{j\neq i}^{n}a_{ij}$ as the degree of vertex $i$ and $\mathbf{d}=(d_{1},\dots,d_{n})^{\top}$ as the degree sequence of the graph $\mathcal{G}$. %It is obvious that $\sum_{i=1}^{n}d_{i}=g_{++}$.
We suppose that the adjacency matrix $A$ has independent Bernoulli elements such that $\pr (a_{ij}=1) = p_{ij}$ and specify the corresponding family of probability null models for $A$. Here $\boldsymbol{\alpha}^{*}=(\alpha_{1}^{*},\alpha_{2}^{*},...,\alpha_{n}^{*})^{\top}$ is a parameter vector. The parameter
$\alpha_i^*$ quantifies the effect of the vertex $i$. To this end, let $\varepsilon(\cdot,\cdot): \R^2 \mapsto \R$ be a smooth bivariate function satisfying $ \varepsilon(x,y) = \varepsilon(y,x)$. Consider the model $\emph{M}_{\varepsilon}$ specified $p_{ij}$ as
\begin{equation}\label{2.1}
\emph{M}_{\varepsilon}: \log p_{ij} = \alpha_{i}^{*} + \alpha_{j}^{*} + \varepsilon(\alpha_{i}^{*}+\alpha_{j}^{*})
\end{equation}
so that we obtain a class of log-linear models indexed by $\varepsilon$. In fact, this class encompasses three common choices of link functions:
\begin{equation}\label{2.2:1}
\emph{M}_{\log}:  \log p_{ij} = \alpha_{i}^{*} +  \alpha_{j}^{*},
\end{equation}
\begin{equation}\label{2.2:2}
\emph{M}_{\rm logit}:  {\rm logit} (p_{ij})= \alpha_{i}^{*} +  \alpha_{j}^{*},~\text{where}~{\rm logit} (x)= \begin{matrix} \frac{1}{1 + e^{-x}} \end{matrix},
\end{equation}
\begin{equation}\label{2.2:2}
\emph{M}_{\rm cloglog}:  \log(-\log(1-p_{ij}))= \alpha_{i}^{*} + \alpha_{j}^{*}.
\end{equation}
%\[\emph{M}_{\log}: \og p_{ij} = \alpha_{i} + \beta_{j} \]
%\[\emph{M}_{logit}: \log p_{ij} = \alpha_{i} + \beta_{j} \]
To see this, set $\varepsilon(\alpha_{i}^{*},\alpha_{j}^{*}) = -\log\{1+ \exp(\alpha_{i}^{*}+ \alpha_{j}^{*})\}$ for the model $\emph{M}_{logit}$. As we have seen, the logit-link model $\emph{M}_{logit}$ is an undirected version of \cite{Holland1981An} exponential family random graph model without reciprocal parameter. As noted by \cite{yanxu2013}, the degree sequence of $\mathcal{G}$ is sufficient for $\alpha$ in this case, and they derived its asymptotic normality. The log-link model $\emph{M}_{\log}$ can be considered as an undirected version of expected degree model with $\varepsilon(\alpha_{i}^{*},\alpha_{i}^{*}) = 0$ constructed by \cite{Fan2002Connected}. Setting $\varepsilon(\alpha_{i}^{*},\alpha_{j}^{*}) = \log\{1- \exp(\alpha_{i}^{*}+ \alpha_{j}^{*})\}-(\alpha_{i}^{*}+ \alpha_{j}^{*})$, we get the complementary log-log link model $\emph{M}_{\rm cloglog}$ (see \cite{Mccullagh1989Generalized}).

\subsection{The sub-Exponential/-Gamma noisy sequence}
In this section, we will prepare the probability and distribution preliminary for later network analysis. This section can be divided into two parts. In the first part, we are going to recap the definition of a specific type of distributions and state their basic properties. 

\begin{definition}[Sub-Gaussian distribution]
 A random variable $X \in \mathbb{R}$ with mean zero is sub-Gaussian with
variance proxy $\sigma^{2}$ if its MGF satisfies
$$\mathrm{E}[\exp (s X)] \leq \exp \left(\frac{\sigma^{2} s^{2}}{2}\right), \quad \forall s \in \mathbb{R}.$$
In this case we write $X \sim \operatorname{subG}\left(\sigma^{2}\right) .$
\end{definition}
\noindent Similarly, we define that a random variable is called sub-exponential if its survival function is bounded by that of a particular exponential distribution.
\begin{definition}[Sub-exponential distribution]\label{def:Sub-exponential}
 A random variable $X \in \mathbb{R}$ with mean zero is sub-exponential with
parameter $\sigma^{2}$ (denoted by $X \sim \operatorname{subE}(\lambda))$ if its MGF satisfies
\begin{equation}\label{eq-sub-exponentialmoments}
 {\mathrm{{E}}}e^{s X} \leq e^{\frac{s^{2}\lambda ^{2}}{2}} \quad
\text{for all } |s| <\frac{1}{\lambda}.
\end{equation}
\end{definition}
\noindent Obviously, sub-Gaussian random variables are sub-exponential but not vice verse. And there are some equivalent definitions of sub-exponential distributions, see \cite{Rigollet2019}, which can lead to sub-exponential norm used in concentration inequalities. The equivalent definitions and other details can be seen in the Appendix.

\begin{definition}[Sub-exponential norm]\label{def: sub-exponential}
The sub-exponential norm of $X$, denoted $\|X\|_{\psi_1}$,
  is defined as
  \begin{equation}\label{eq: psione}
  \|X\|_{\psi_1} = \inf \left\{ t>0 :\; \E \exp(|X|/t) \le 2 \right\}.
  \end{equation}
\end{definition}

%To determine the absolute constants $K_i$ in Proposition~\ref{prop: sub-exponential properties}, we present a useful lemma below which is to determine the  sub-exponential parameter in the Definition \ref{def:Sub-exponential} by its MGF if we adopt Definition \ref{def: sub-exponential} by the sub-exponential norm. The results holds for sub-Gaussian norm.
The above norm provides us with a useful tool to connect MGF and the defined norm, and hence makes it possible to give concentrations for the sub-exponential variables. The following lemma confirms Definition \ref{def: sub-exponential} would give a concise form of concentrations.

\begin{lemma}[Properties of sub-exponential norm]\label{prop:Psub-E}
If $\E \exp(|X|/\|X\|_{{\psi _1}} ) \le 2$, then we have
\begin{enumerate}[\rm{(}a\rm{)}]
\item Tail bounds
\begin{equation}\label{sub-exponential tail}
\pr \{ |X| > t \} \leq 2\exp(-t/\|X\|_{\psi _{1}}) \quad \text{for all } t \geq 0;
\end{equation}	
\item  Moment bounds
$$\E |X|^k \le 2{\|X\|_{{\psi _1}}^k}k! \quad \text{for all integer}~k \ge 1;$$
\item If $\E X=0$, we get the MGF bounds
\begin{equation}\label{sub-exponentialmgf}
\E e^{s X} \leq e^{({\rm{2}} \left\| X \right\|_{{\psi _1}})^{2}s^{2}}~\text { for all }|{s}|<1/(2\left\| X \right\|_{{\psi _1}})
\end{equation}
which gives $X  \sim \operatorname{subE}(2\left\| X \right\|_{{\psi _1}})$.
\end{enumerate}
\end{lemma}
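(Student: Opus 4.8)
The plan is to establish the three parts in sequence, using the defining inequality $\E\exp(|X|/K)\le 2$ (write $K=\|X\|_{\psi_1}$ for brevity) as the sole input and feeding each part into the next. For part (a), I would apply Markov's inequality to the nonnegative random variable $\exp(|X|/K)$. Since $t\mapsto e^{t/K}$ is strictly increasing, the event $\{|X|>t\}$ coincides with $\{\exp(|X|/K)>\exp(t/K)\}$, whence
\[
\pr\{|X|>t\}\le \frac{\E\exp(|X|/K)}{\exp(t/K)}\le 2\exp(-t/K),
\]
which is exactly the claimed tail bound. This step is immediate once the defining inequality is in hand.

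For part (b), I would convert the tail bound into a moment bound through the layer-cake (integration-by-parts) representation $\E|X|^k=\int_0^\infty k u^{k-1}\,\pr\{|X|>u\}\,du$. Substituting the tail bound from (a) yields $\E|X|^k\le 2k\int_0^\infty u^{k-1}e^{-u/K}\,du$, and the remaining integral is a Gamma integral equal to $K^k\Gamma(k)=K^k(k-1)!$. Therefore $\E|X|^k\le 2k\,K^k(k-1)!=2K^k k!$, as required; the only care needed is the change of variable $u=Kv$ in the Gamma integral.

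For part (c), I would expand the moment generating function as a power series and control it term by term. Because $\E X=0$, the constant and linear terms collapse to $1$, giving $\E e^{sX}=1+\sum_{k\ge 2} s^k\,\E X^k/k!$. Bounding $|\E X^k|\le\E|X|^k\le 2K^k k!$ from (b) cancels the factorials and leaves a geometric series: for $|s|K<1$,
\[
\E e^{sX}\le 1+2\sum_{k\ge 2}(|s|K)^k = 1+\frac{2(|s|K)^2}{1-|s|K}.
\]
Restricting to $|s|<1/(2K)$ forces $|s|K<1/2$, so $1-|s|K>1/2$ and the bound collapses to $\E e^{sX}\le 1+4K^2s^2\le e^{4K^2s^2}=e^{(2K)^2 s^2}$ via $1+x\le e^x$, which is the stated MGF bound and identifies the parameter as $2K=2\|X\|_{\psi_1}$, so $X\sim\operatorname{subE}(2\|X\|_{\psi_1})$.

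The routine portions are (a) and (b); the step requiring the most attention is (c). The main obstacle there is twofold: first, justifying the interchange of expectation and the infinite sum, which follows from absolute convergence guaranteed by the moment bounds in (b) together with $|s|K<1$ (so dominated convergence applies); and second, the bookkeeping that links the mere geometric-convergence condition $|s|K<1$ to the sharper operating range $|s|<1/(2K)$, which is precisely what is needed to absorb the factor $1/(1-|s|K)$ into an explicit constant and thereby match the targeted sub-exponential form.
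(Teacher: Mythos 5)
Your proposal is correct and follows essentially the same route as the paper's own proof: Markov's inequality applied to $\exp(|X|/\|X\|_{\psi_1})$ for (a), the layer-cake representation reduced to a Gamma integral for (b), and a term-by-term Taylor expansion of the MGF summed as a geometric series and absorbed via $1+x\le e^x$ for (c). The only difference is cosmetic: you carry out the Gamma integral and the $|s|\|X\|_{\psi_1}<1/2$ bookkeeping slightly more carefully than the paper (which has a harmless typo writing $\Gamma(k-1)$ where $\Gamma(k)$ is meant).
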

Lemma \ref{prop:Psub-E}(c) implies that the following user-friendly concentration inequality would contain all known constant. One should note that Theorem 2.8.1 of \cite{Vershynin2018} includes an unspecific constant, so it is inefficacious when constructing non-asymptotic confident interval for sub-exponential sample mean.

\begin{corollary}[Concentration for sub-exponential sum of r.v.s,~\cite{zhang2020concentration}]\label{Sub-expConcentration}
Let $\{ X_{i}\} _{i = 1}^n $ be zero mean independent sub-exponential distribution with $\|X_i\|_{\psi_1}\le \infty$. Then for every $t \ge 0$,
\[\pr ( {| {\sum\limits_{i = 1}^n {{X_i}} }| \ge t} ) \le 2 \exp\{  { - \frac{1}{4}( {\frac{{{t^2}}}{\sum\nolimits_{i = 1}^n {2\|X_i\|_{\psi_1}^2}} } \wedge \frac{t}{\mathop {\max }\limits_{1 \le i \le n}\|X_i\|_{\psi_1}}}) \}.\]
\end{corollary}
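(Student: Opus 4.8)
The statement is a Bernstein-type two-sided deviation bound, so the natural route is the exponential Chernoff method, fed by the moment-generating-function control of Lemma~\ref{prop:Psub-E}(c). The plan begins by reducing to a one-sided estimate: each $-X_i$ is again a zero-mean independent sub-exponential variable with $\|-X_i\|_{\psi_1}=\|X_i\|_{\psi_1}$, so any bound I prove for $\pr(\sum_i X_i \ge t)$ holds verbatim for $\pr(-\sum_i X_i \ge t)$. A union bound over the events $\{\sum_i X_i \ge t\}$ and $\{\sum_i X_i \le -t\}$ then accounts for the leading factor $2$, and it remains to control the one-sided probability $\pr(S \ge t)$ with $S := \sum_{i=1}^n X_i$.

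Next I would apply the exponential Markov inequality and independence. Write $\sigma_i := \|X_i\|_{\psi_1}$, $v := \sum_{i=1}^n 2\sigma_i^2$, and $b := \max_{1\le i \le n}\sigma_i$. For $s>0$, Markov applied to $e^{sS}$ together with independence gives $\pr(S\ge t)\le e^{-st}\prod_{i=1}^n \E e^{sX_i}$. By Lemma~\ref{prop:Psub-E}(c) we have $X_i\sim \operatorname{subE}(2\sigma_i)$, so Definition~\ref{def:Sub-exponential} yields $\E e^{sX_i}\le \exp(2\sigma_i^2 s^2)$ whenever $|s|<1/(2\sigma_i)$. Imposing the joint constraint $s<1/(2b)$ makes all $n$ factors simultaneously admissible, and multiplying them out leaves
\[
\pr(S\ge t)\le \exp\!\left(-st + v s^2\right), \qquad 0<s<\tfrac{1}{2b}.
\]

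The final step is to optimize the free parameter under the admissibility constraint. The quadratic exponent $-st+vs^2$ is minimized over $\R$ at $s=t/(2v)$, with value $-t^2/(4v)$, so I would take $s^\star=\min\{t/(2v),\,1/(2b)\}$. If $t\le v/b$ the interior optimizer is admissible and produces the Gaussian-type exponent $-t^2/(4v)=-\tfrac14\,t^2/v$, which is exactly $-\tfrac14\,t^2/\sum_i 2\sigma_i^2$. If instead $t>v/b$ the constraint binds and I set $s^\star=1/(2b)$; the exponent becomes $-t/(2b)+v/(4b^2)$, and the inequality $v<tb$ forced by $t>v/b$ shows this is at most $-t/(4b)=-\tfrac14\,t/b$. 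Since $t\le v/b$ (resp. $t>v/b$) is precisely the condition under which $t^2/v$ (resp. $t/b$) is the smaller of the two competing terms, the two cases assemble into $\exp\{-\tfrac14(\,t^2/v \wedge t/b\,)\}$, which is the claim once the factor $2$ from the reduction step is reinstated.

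The one genuinely delicate point is that the sub-exponential MGF estimate is valid only in the window $|s|<1/(2\sigma_i)$; the unconstrained Chernoff optimizer $t/(2v)$ leaves this window once $t$ is large, so it cannot simply be plugged in. Clamping to $s^\star=1/(2b)$ is exactly what converts the would-be Gaussian tail into the linear (exponential) tail, and the main care is in verifying that the clamped exponent still collapses to the clean constant $\tfrac14\,t/b$ via $v<tb$. This constrained-optimization bookkeeping is what generates the $\wedge$ in the statement and is where I expect the bulk of the attention to be needed; the symmetrization and Chernoff steps are otherwise routine.
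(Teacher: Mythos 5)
Your argument is correct, and it is in substance the same Chernoff-plus-clamping argument that underlies the paper's result, but you present it quite differently: the paper's own ``proof'' is a two-line reference --- it invokes Lemma \ref{prop:Psub-E}(c) to conclude $X_i\sim\operatorname{subE}(2\|X_i\|_{\psi_1})$ and then cites Lemma \ref{sub-exponentialConcentration}(b) (whose proof is in turn outsourced to \cite{Rigollet2019}) with unit weights. You instead unfold that black box and run the exponential Markov bound and the constrained optimization over $0<s<1/(2b)$ yourself. Your self-contained route buys something concrete: by multiplying the individual MGF bounds $\E e^{sX_i}\le e^{2\sigma_i^2 s^2}$ you obtain the heterogeneous variance term $\sum_i 2\|X_i\|_{\psi_1}^2$ that actually appears in the statement, whereas a literal application of Lemma \ref{sub-exponentialConcentration}(b), which is formulated with the single parameter $\lambda=\max_i\lambda_i$, would only give $2n\max_i\|X_i\|_{\psi_1}^2$ in the denominator. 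Two small caveats. First, your step ``$X_i\sim\operatorname{subE}(2\sigma_i)$ hence $\E e^{sX_i}\le e^{2\sigma_i^2s^2}$'' takes the conclusion of Lemma \ref{prop:Psub-E}(c) at face value; note that the displayed bound \eqref{sub-exponentialmgf} actually proved there is $\E e^{sX}\le e^{(2\|X\|_{\psi_1})^2s^2}=e^{4\sigma^2s^2}$, which per Definition \ref{def:Sub-exponential} only certifies $\operatorname{subE}(2\sqrt{2}\,\sigma)$ and would degrade your Gaussian-regime constant from $\tfrac14$ to $\tfrac18$; the stated constant in the corollary requires the stronger reading, so this is an imprecision inherited from the paper rather than a flaw you introduced, but it is worth flagging. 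Second, the MGF window is open, so you should take $s\uparrow 1/(2b)$ (or an infimum over the open interval) rather than setting $s^\star=1/(2b)$ exactly; since the exponent is continuous in $s$ this is cosmetic.
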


\iffalse
When network data containing sensitive individuals' and their relationships' information can not be directly made public due to privacy concerns,
to guarantee the confidence information not to be disclosed, they must be carefully treated before being made public.
A simple method to deal with the privacy problem is the anonymization technique by removing identifiers from a network and releasing an anonymized isomorphic network, but it has been demonstrated that it is easy to attack [e.g., \cite{Narayanan2009}, \cite{Wondracek2010}, \cite{WANG2019541}]. \cite{Dwork2006} developed a rigorous definition of Differential Privacy(DP) to achieve privacy protection.
\fi
\cite{Hay2009} and \cite{Karwa2016} used the Laplace mechanism to provide privacy protection in which independent and identically distributed Laplace random variables are added into the input data.
Here, we consider a general distribution for the noisy variables with the Laplace distribution as a special case.
%\begin{example}[Laplace r.vs]
%{\color{black}{ A r.v. $X$ has a Laplace distribution (${{\textrm {Laplace}}(\mu ,b)},~\mu \in \mathbb{R} ,b>0$) if its probability density function is
%$f(x )=\frac{1}{2 b} e^{-\frac{|x-\mu|}{b}}$. The Laplace distribution is the distribution of the difference of two independent identical exponential distributed r.vs, thus it is also sub-exponential distributed by using Proposition \ref{sub-exponentialConcentration}(a). The graph of $f(x)$ likes two exponential distributions which are spliced together back-to-back.
%\end{example}
\begin{example} [Laplace r.vs]
{\color{black}{ A r.v. $X$ follows a Laplace distribution (${{\textrm {Laplace}}(\mu ,b)},~\mu \in \mathbb{R} ,b>0$) if its probability density function is
$f(x )=\frac{1}{2 b} e^{-\frac{|x-\mu|}{b}}$. The Laplace distribution is a distribution of the difference of two independent identical exponential distributed r.vs, thus it is also sub-exponential distributed by using Proposition \ref{sub-exponentialConcentration}(a). The graph of $f(x)$ are like two exponential distributions which are spliced together back-to-back.}}
\end{example}

\begin{example} [Geometric distributions]
{\color{black}{The \emph{geometric distribution} $X\sim \mathrm{Geo} (q) $ for r.v. $X$ is given by
$\pr (X=k) ={(1 - q)} {q^{k-1}},~~q \in (0,1),k=1,2,\cdots.$ The mean and variance of ${\rm{Geo}} (q) $ are $\frac{{1 - q}}{{q}}$ and $\frac{{1 - q}}{{{{q}^2}}}$ respectively. Apply
Lemma 4.3 in \cite{hillar2013maximum}, we have ${({\rm{E}}|X{|^k})^{1/k}} < \frac{{2k}}{{ - \log (1 - q)}}$. Followed by the triangle inequality applied to the $p$ -norm and Jensen's inequality for $k \ge 1$, we have
$$
\mathrm{E}[|X-\mathrm{E}X|^{k}]^{1 / k} \le \mathrm{E}[|X|^{k}]^{1 / k}+|\mathrm{E}[X]| \leq 2 \mathrm{E}[|X|^{k}]^{1 / k}\le \frac{{4k}}{{ - \log (1 - q)}}.
$$
Then Proposition \ref{prop: sub-exponential properties}(3) shows that the \textit{centralized geometric distribution} is sub-exponential with $K_3=\frac{{4}}{{ - \log (1 - q)}}$.
}}
\end{example}

\begin{example}[Discrete Laplace r.vs]
A r.v. $X$ obeys the discrete Laplace distribution with parameter $p \in(0,1),$ denoted by $\mathrm{DL}(p),$ if
\[
f_{p}(k)=\mathrm{P}(X=k)=\frac{1-p}{1+p} p^{|k|}, \quad k \in \mathbb{Z}=\{0,\pm 1,\pm 2, \ldots\}.
\]
Similar to Laplace distribution, the discrete Laplace r.v. is the difference of two independent identical geometric distributed r.vs (see Proposition 3.1 in \cite{Inusah2006A}). Since geometric distribution is sub-exponential in the previous example, the Proposition \ref{sub-exponentialConcentration}(a) implies that discrete Laplace is also sub-exponential distributed. In differential privacy of network models, the noises are assumed from the discrete Laplace distribution (see \cite{fan2020} and references therein).
\end{example}

In statistical applications, we sometimes do not expect the bounded assumption in Hoeffding's inequality, the following Bernstein's inequality for a sum of independent random variables allows us to estimate the tail probability by a weaker version of exponential condition on the growth of the $k$-moment (like a condition of the exponential MGF) without any assumption of boundedness.
\begin{lemma}[Bernstein's inequality]\label{lm-Bernsteingm}
The centred independent random variables $X_1,\ldots, X_n$ satisfy \textit{the growth of moments condition}
\begin{equation}\label{eq-Bernsteinmoments}
{\rm{E}}{\left| {{X_i}} \right|^k} \le \frac{1}{2}\nu_i^2{\kappa_i^{k - 2}}k!,~(i = 1,2, \cdots ,n),~\text{for all}~k\ge 2
\end{equation}
where $\{\kappa_i\}_{i=1}^n$ and $ \{\nu _i\}_{i=1}^n$ are constants independent of $k$. Denote ${\nu _n^2} = \sum\limits_{i = 1}^n {v _i^2} $ (the fluctuation of sums) and $\kappa= \mathop {\max }\limits_{1 \le i \le n} \kappa_i$. Then we have $
{\rm{E}} e^{s X_{i}} \leq e^{s^{2} \nu_{i}^{2} /(2-2 \kappa_i|s|)}.
$ And for $t>0$
\begin{equation}\label{eq-Bernstein}
\pr \left( {\left| {{S_n}} \right| \ge t} \right) \le 2\exp \left( { - \frac{{{r^2}}}{{2{\nu _n^2} + 2\kappa t}}} \right),~~\pr( {\left| {{S_n}} \right| \ge \sqrt {2\nu _n^2t}  + \kappa r}) \le 2{e^{ - t}}.
\end{equation}
\end{lemma}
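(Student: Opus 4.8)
The plan is to follow the classical three-move route for Bernstein-type bounds: first pass from the moment growth condition \eqref{eq-Bernsteinmoments} to an exponential-moment (MGF) bound for each $X_i$; second, tensorize over the independent summands to control the MGF of $S_n=\sum_{i=1}^n X_i$; and third, run a Chernoff argument and optimize the free parameter $s$. The two displayed conclusions in \eqref{eq-Bernstein} are then the two-sided Chernoff bound and its reparametrization.

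First I would expand the MGF as a power series. Since each $X_i$ is centred, $\E X_i=0$, so
$$\E e^{sX_i} = 1 + \sum_{k\ge 2}\frac{s^k\,\E X_i^k}{k!} \le 1 + \sum_{k\ge 2}\frac{|s|^k\,\E|X_i|^k}{k!}.$$
Inserting \eqref{eq-Bernsteinmoments} cancels the $k!$ and leaves a geometric series,
$$\E e^{sX_i}\le 1 + \frac{\nu_i^2}{2}\sum_{k\ge 2}|s|^k\kappa_i^{k-2} = 1 + \frac{\nu_i^2 s^2}{2(1-\kappa_i|s|)},$$
which converges precisely when $|s|<1/\kappa_i$. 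Applying $1+x\le e^x$ then yields the per-variable bound $\E e^{sX_i}\le \exp\{\nu_i^2 s^2/(2-2\kappa_i|s|)\}$ asserted in the statement.

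Next, by independence the MGF of $S_n$ factorizes; and since $\kappa_i\le\kappa$ forces $2-2\kappa_i|s|\ge 2-2\kappa|s|$ for every $i$ on the common range $|s|<1/\kappa$, summing the exponents gives $\E e^{sS_n}\le \exp\{\nu_n^2 s^2/(2-2\kappa|s|)\}$. A Markov/Chernoff step gives, for $0<s<1/\kappa$,
$$\pr(S_n\ge t)\le \exp\Big(-st + \frac{\nu_n^2 s^2}{2-2\kappa s}\Big),$$
and I would optimize by the standard choice $s = t/(\nu_n^2+\kappa t)$, which indeed lies in $(0,1/\kappa)$ whenever $\nu_n^2>0$. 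Substituting and simplifying collapses the exponent to $-t^2/(2\nu_n^2+2\kappa t)$. Applying the same argument to $-X_i$ (which obeys the identical moment bound because $|-X_i|^k=|X_i|^k$) and taking a union bound produces the first, two-sided inequality in \eqref{eq-Bernstein}.

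For the second, inverted form I would solve the quadratic $u^2-2\kappa t u-2\nu_n^2 t\ge 0$ so that $-u^2/(2\nu_n^2+2\kappa u)\le -t$; its positive root is $u_\star=\kappa t+\sqrt{\kappa^2 t^2+2\nu_n^2 t}$, and the subadditivity estimate $\sqrt{a+b}\le\sqrt a+\sqrt b$ bounds $u_\star\le \sqrt{2\nu_n^2 t}+2\kappa t$, delivering the stated deviation at confidence level $2e^{-t}$ recorded in \eqref{eq-Bernstein}. The only genuinely delicate point is the Chernoff optimization: one must check that the minimizing $s$ respects the constraint $|s|<1/\kappa$ under which the geometric series — and hence the entire MGF bound — is valid, and that the optimal exponent simplifies exactly; everything else is bookkeeping with the moment condition \eqref{eq-Bernsteinmoments} and elementary inequalities.
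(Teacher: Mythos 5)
The paper itself offers no proof of this lemma --- it simply points to p.~119 of \cite{Gin2015Mathematical} --- so there is no in-paper argument to compare against; I will judge your proposal on its own. Your first three moves are correct and are the standard route: the series expansion plus the moment condition \eqref{eq-Bernsteinmoments} gives $\E e^{sX_i}\le 1+\nu_i^2s^2/(2-2\kappa_i|s|)\le e^{\nu_i^2s^2/(2-2\kappa_i|s|)}$ for $|s|<1/\kappa_i$, tensorization with $\kappa_i\le\kappa$ gives the MGF bound for $S_n$, and the Chernoff step with $s=t/(\nu_n^2+\kappa t)$ (which does lie in $(0,1/\kappa)$) collapses the exponent to exactly $-t^2/(2\nu_n^2+2\kappa t)$; the union bound over $\pm S_n$ is fine.

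The gap is in your last paragraph. Inverting the \emph{crude} exponent $u^2/(2\nu_n^2+2\kappa u)$ via the quadratic $u^2-2\kappa tu-2\nu_n^2t\ge 0$ gives $u_\star=\kappa t+\sqrt{\kappa^2t^2+2\nu_n^2t}\le \sqrt{2\nu_n^2t}+2\kappa t$, so what you have actually proved is $\pr(|S_n|\ge\sqrt{2\nu_n^2t}+2\kappa t)\le 2e^{-t}$ --- a strictly \emph{weaker} statement than the claimed $\pr(|S_n|\ge\sqrt{2\nu_n^2t}+\kappa t)\le 2e^{-t}$ (reading the $r$'s in \eqref{eq-Bernstein} as $t$'s). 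Indeed, at $u=\sqrt{2\nu_n^2t}+\kappa t$ one has $u^2=2\nu_n^2t+2\kappa t\sqrt{2\nu_n^2t}+\kappa^2t^2<2\nu_n^2t+2\kappa tu$, so the first tail bound does \emph{not} yield $e^{-t}$ there; the sharp form cannot be obtained by inverting the weaker one. To close the gap, go back to the Chernoff bound $\pr(S_n\ge u)\le\exp\{-su+\nu_n^2s^2/(2-2\kappa s)\}$ and either compute the full Cram\'er transform $\psi^*(u)=\frac{\nu_n^2}{\kappa^2}h_1(\kappa u/\nu_n^2)$ with $h_1(x)=1+x-\sqrt{1+2x}$ and invert $h_1$, or more simply plug in the specific choice
\begin{equation*}
s=\frac{\sqrt{2t/\nu_n^2}}{1+\kappa\sqrt{2t/\nu_n^2}}\in\Bigl(0,\tfrac{1}{\kappa}\Bigr),
\end{equation*}
for which a direct computation gives $su-\nu_n^2s^2/(2-2\kappa s)=t$ exactly at $u=\sqrt{2\nu_n^2t}+\kappa t$. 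With that substitution the second inequality of \eqref{eq-Bernstein} follows; the rest of your argument stands.
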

\noindent The proof Bernstein's inequality for the sum of independent random variables can be founded in p119 of \cite{Gin2015Mathematical}.

Like the sub-Gaussian, \cite{boucheron2013concentration} defines the sub-Gamma r.v. based on the right tail and left tail with variance factor $v$ and scale factor $b$.
\begin{definition}[Sub-Gamma r.v.]\label{def: sub-Gamma}
A centralized r.v. $X$ is {\em sub-Gamma} distributed with variance factor $\upsilon>0$ and scale parameter $c>0$ (denoted by $X \sim \mathrm{sub}\Gamma(\upsilon,c)$) if
\begin{equation}\label{eq:sub-gamma}
\log (\mathrm{{E}}{e^{s{X}}}) \leq \frac{s^{2}}{2} \frac{\upsilon}{1-c|s|}, \quad \forall~0<|s|<c^{-1}.
\end{equation}
\end{definition}

The sub-exponential  moment condition  \eqref{eq-sub-exponentialmoments} would imply that Bernstein's moment condition \eqref{eq-Bernsteinmoments} is observed as
$$\log (\mathrm{{E}}{e^{s{X}}}) \le {\frac{s^{2}\lambda ^{2}}{2}} \le \frac{s^{2}\lambda ^{2}}{
{2(1 - \lambda|s|)}},~\forall~|s| <\frac{1}{\lambda}.$$

\begin{lemma}[Concentration for sub-Gamma sum, Section 2.4 of \cite{boucheron2013concentration}]\label{sub-GammaConcentration}
Let $\{ X_{i}\} _{i = 1}^n $ be independent $\{\mathrm{sub}\Gamma(\upsilon_i,c_i)\} _{i = 1}^n$ distributed with zero mean. Define $c= \mathop {\max }\limits_{1 \le i \le n} c_i$, then
\begin{enumerate}[\rm{(}a\rm{)}]
\item Closed under addition: $ S_n:=\sum\limits_{i = 1}^n {{X_i}}  \sim {\rm{sub}}\Gamma({\sum\limits_{i = 1}^n\upsilon_i},c)$;
\item $\pr (|S_n|\geq t)\leq 2 \exp \left(-{\frac {t^{2}/2}{\sum _{i=1}^{n}\upsilon_i+ct}}\right)$ and $\pr \{|S_n|>(2t \sum_{i=1}^{n}\upsilon_i)^{1/2}+c t\} \leq 2 e^{-t},~\forall~t \ge 0$;

\item If $X \sim \mathrm{sub}\Gamma(\upsilon,c)$, the even moments bounds satisfy
$\E{X^{2k}} \le k!{({8v} )^k}+(2k)!{({4c} )^{2k}},~k\ge 1.$

%\item If $P\{|X|>(2t \upsilon)^{1/2}+c t\} \leq 2 e^{-t}$, then $X \sim \mathrm{sub}\Gamma(32(\upsilon+2c^2),8c)$.
\end{enumerate}
\end{lemma}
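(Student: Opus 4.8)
The plan is to establish the three parts in sequence, with (b) and (c) both resting on the cumulant bound proved in (a). \textbf{Part (a).} First I would use independence to factor the cumulant generating function: for every $s$ in the common domain $0<|s|<c^{-1}=\min_{1\le i\le n}c_i^{-1}$ one has $\log\E e^{sS_n}=\sum_{i=1}^n\log\E e^{sX_i}$. Applying Definition \ref{def: sub-Gamma} to each summand and using that $c_i\le c$ forces $(1-c_i|s|)^{-1}\le(1-c|s|)^{-1}$ on this domain, I obtain
\[
\log\E e^{sS_n}\le\sum_{i=1}^n\frac{s^2}{2}\frac{\upsilon_i}{1-c_i|s|}\le\frac{s^2}{2}\frac{\sum_{i=1}^n\upsilon_i}{1-c|s|},
\]
which is exactly the defining inequality of $\mathrm{sub}\Gamma(\sum_{i=1}^n\upsilon_i,c)$. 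The only thing to verify is that the validity range is $|s|<c^{-1}$, and this holds because $c=\max_i c_i$ makes $c^{-1}$ the smallest of the individual radii.

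\textbf{Part (b).} Writing $\upsilon=\sum_{i=1}^n\upsilon_i$, I would run the Cram\'er--Chernoff method on the bound from (a). The exponential Markov inequality gives, for $0<s<c^{-1}$, $\pr(S_n\ge t)\le\exp(-st+\tfrac{\upsilon s^2}{2(1-cs)})$, hence $\pr(S_n\ge t)\le\exp(-\psi^*(t))$ where $\psi^*(t)=\sup_{0<s<c^{-1}}\{st-\tfrac{\upsilon s^2}{2(1-cs)}\}$ is the Legendre transform of the sub-Gamma rate function. Carrying out this maximization (as in Section 2.4 of \cite{boucheron2013concentration}) gives the closed form $\psi^*(t)=\tfrac{\upsilon}{c^2}h(\tfrac{ct}{\upsilon})$ with $h(u)=1+u-\sqrt{1+2u}$. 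The first tail bound then follows from the elementary inequality $h(u)\ge\tfrac{u^2}{2(1+u)}$, which collapses $\psi^*(t)$ to $\tfrac{t^2/2}{\upsilon+ct}$; the second follows by inverting $h$, namely $h^{-1}(w)=w+\sqrt{2w}$, so that $\psi^*(\sqrt{2\upsilon t}+ct)=t$. Finally, applying the same argument to $-S_n$, which is $\mathrm{sub}\Gamma(\upsilon,c)$ because Definition \ref{def: sub-Gamma} is symmetric in the sign of $s$, and adding the two one-sided estimates produces the factor $2$ in both displays.

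\textbf{Part (c).} For the even moments I would integrate the two-sided tail bound from (b) through the layer-cake identity $\E X^{2k}=2k\int_0^\infty t^{2k-1}\pr(|X|>t)\,dt$. Splitting the denominator $\upsilon+ct$ at the crossover $t=\upsilon/c$ gives $\upsilon+ct\le2\upsilon$ on $[0,\upsilon/c]$ and $\upsilon+ct\le2ct$ on $[\upsilon/c,\infty)$, so the integrand is dominated by $2e^{-t^2/(4\upsilon)}$ in the Gaussian regime and by $2e^{-t/(4c)}$ in the exponential regime. Evaluating the two Gamma integrals, $\int_0^\infty t^{2k-1}e^{-t^2/(4\upsilon)}\,dt=\tfrac12(4\upsilon)^k(k-1)!$ and $\int_0^\infty t^{2k-1}e^{-t/(4c)}\,dt=(4c)^{2k}(2k-1)!$, yields the two advertised terms $k!(8\upsilon)^k$ and $(2k)!(4c)^{2k}$.

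\textbf{Where the difficulty lies.} The conceptual core is the Legendre-transform computation in (b): once $\psi^*(t)=\tfrac{\upsilon}{c^2}h(ct/\upsilon)$ and the two elementary facts $h(u)\ge u^2/(2(1+u))$ and $h^{-1}(w)=w+\sqrt{2w}$ are in hand, both tail bounds drop out mechanically. The most delicate bookkeeping is in (c): the Gaussian contribution is comfortably absorbed into $k!(8\upsilon)^k$ since $2\cdot4^k\le8^k$ for $k\ge1$, but the naive split leaves the exponential contribution as $2\,(2k)!(4c)^{2k}$, a factor $2$ larger than claimed, so matching the stated constant exactly requires a sharper treatment of the incomplete Gamma integral over $[\upsilon/c,\infty)$ rather than simply extending it to the whole half-line.
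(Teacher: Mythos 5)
The paper never proves this lemma: it is imported by citation from Section 2.4 of \cite{boucheron2013concentration}, so there is no internal proof to compare against. Your parts (a) and (b) reproduce the standard argument from that reference correctly: the cumulant generating functions add under independence, $c_i\le c$ lets you replace each denominator $1-c_i|s|$ by the common $1-c|s|$ on the common domain $|s|<c^{-1}$, and the Cram\'er--Chernoff computation with $\psi^*(t)=\tfrac{\upsilon}{c^2}h(ct/\upsilon)$, $h(u)=1+u-\sqrt{1+2u}$, combined with $h(u)\ge u^2/(2(1+u))$ and $h^{-1}(w)=w+\sqrt{2w}$, delivers both displays; the two-sidedness of Definition \ref{def: sub-Gamma} (it constrains $\E e^{sX}$ for both signs of $s$) justifies applying the bound to $-S_n$ and hence the factor $2$.

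The one genuine gap is the point you flag yourself in part (c): integrating the first tail bound of (b) by the layer-cake formula with the split at $t=\upsilon/c$ leaves $2\,(2k)!(4c)^{2k}$ in the exponential regime, and no refinement of the incomplete Gamma integral over $[\upsilon/c,\infty)$ removes that factor of $2$ in general (for $\upsilon/c^2$ small the truncated integral is essentially the full one). The standard repair is to integrate the \emph{second} tail bound of (b) instead: writing $\E X^{2k}=\int_0^\infty \pr(|X|>s)\,d(s^{2k})$ and substituting $s=\sqrt{2\upsilon t}+ct$ gives, after integration by parts,
\begin{equation*}
\E X^{2k}\le 2\int_0^\infty e^{-t}\bigl(\sqrt{2\upsilon t}+ct\bigr)^{2k}\,dt
\le 2^{2k}\Bigl[(2\upsilon)^k\,k!+c^{2k}\,(2k)!\Bigr]
=k!\,(8\upsilon)^k+(2k)!\,(2c)^{2k},
\end{equation*}
using $(a+b)^{2k}\le 2^{2k-1}(a^{2k}+b^{2k})$; this is within the stated bound since $(2c)^{2k}\le(4c)^{2k}$. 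With that substitution your part (c) closes and the lemma is fully proved.
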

%\begin{proof}
% The proof of (a-c) is in Section 2.4 of \cite{boucheron2013concentration}. To see (d)
%\end{proof}
The concentration inequalities introduced in above only concerns the linear
combinations of independent random variables. For lots of
applications in high-dimensional statistics, we have
to control the maximum of the n r.vs when deriving error bounds for the proposed estimator.
In our proof of Theorem \ref{theorem:1}, the following  maximum inequality is crucial.
\begin{lemma}[Concentration for maximum of sub-Gamma
random variables]\label{sub-GammaMAX}
Let $\{ X_{i}\} _{i = 1}^n $ be independent $\{\mathrm{sub}\Gamma(\upsilon_i,c_i)\} _{i = 1}^n$ distributed with zero mean.  Denote $\max\limits_{i=1,...,n}\upsilon_{i} = \upsilon$ and $\max\limits_{i=1,...,n}c_{i} = c$, we have
\begin{equation}\label{eq:max}
\E (\max_{i=1,\ldots,n}|X_i|)  \leq \sqrt{2\upsilon \log (2n)} + c  \log (2n).
\end{equation}
\end{lemma}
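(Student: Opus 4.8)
The plan is to use the standard exponential-moment (Chernoff-type) maximal inequality, exploiting the sub-Gamma MGF bound \eqref{eq:sub-gamma} directly rather than integrating tail bounds. First I would symmetrize: because the defining inequality \eqref{eq:sub-gamma} involves $|s|$, each $-X_i$ is again $\mathrm{sub}\Gamma(\upsilon_i,c_i)$, so that
$$\max_{i=1,\ldots,n}|X_i| = \max\{X_1,-X_1,\ldots,X_n,-X_n\} =: \max_{j=1,\ldots,2n} Z_j,$$
where each of the $2n$ variables $Z_j$ satisfies $\log \E e^{sZ_j} \le \frac{s^2\upsilon}{2(1-c|s|)}$ for $0<|s|<c^{-1}$, using the monotonicity of $v\mapsto \frac{s^2 v}{2(1-cs)}$ and $c\mapsto \frac{s^2\upsilon}{2(1-cs)}$ to replace $\upsilon_j,c_j$ by the maxima $\upsilon,c$.

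Next, for any fixed $s\in(0,c^{-1})$, Jensen's inequality applied to the convex map $x\mapsto e^{sx}$ gives
$$\exp\!\big(s\,\E\max_j Z_j\big) \le \E\exp\!\big(s\max_j Z_j\big) = \E\max_j e^{sZ_j} \le \sum_{j=1}^{2n}\E e^{sZ_j} \le 2n\,\exp\!\Big(\frac{s^2\upsilon}{2(1-cs)}\Big),$$
where the third step uses that all summands are nonnegative. Taking logarithms and dividing by $s$ yields
$$\E\max_{i}|X_i| \le \frac{\log(2n)}{s} + \frac{s\upsilon}{2(1-cs)}, \qquad \forall\, s\in(0,c^{-1}).$$

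It remains to optimize the right-hand side over $s$. Writing $L=\log(2n)$ and choosing
$$s = \frac{\sqrt{2L/\upsilon}}{\,1+c\sqrt{2L/\upsilon}\,} \in (0,c^{-1}),$$
a direct computation gives $1-cs = (1+c\sqrt{2L/\upsilon})^{-1}$, whence $\frac{L}{s} = \sqrt{L\upsilon/2}+cL$ and $\frac{s\upsilon}{2(1-cs)} = \sqrt{L\upsilon/2}$; summing the two contributions produces exactly $\sqrt{2\upsilon L}+cL = \sqrt{2\upsilon\log(2n)}+c\log(2n)$, which is \eqref{eq:max}. The only delicate point is this choice of $s$: naively balancing the two terms $\frac{L}{s}$ and $\frac{s\upsilon}{2}$ (ignoring the factor $1-cs$) leaves a residual error, so the scale parameter $c$ must be built into $s$ as above to make the two pieces collapse to the stated constants, and one checks $s<c^{-1}$ (needed for validity of the MGF bound) immediately since $cs<1$ by construction. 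Alternatively one could integrate the one-sided sub-Gamma tail bound from Lemma \ref{sub-GammaConcentration}(b) via $\E\max_j Z_j = \int_0^\infty \pr(\max_j Z_j>u)\,du$ together with a union bound, but the MGF route above delivers the exact constants with less bookkeeping.
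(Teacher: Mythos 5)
Your proof is correct and follows essentially the same route as the paper: bound $\exp(s\,\E\max_i|X_i|)$ via Jensen, reduce the maximum of $|X_i|$ to a maximum over the $2n$ sub-Gamma variables $\pm X_i$ (the paper does this through $e^{|x|}\le e^{x}+e^{-x}$, you by relabeling), apply the union-of-MGFs bound to get $\frac{\log(2n)}{s}+\frac{s\upsilon}{2(1-cs)}$, and optimize over $s\in(0,c^{-1})$. The only cosmetic difference is that you exhibit the minimizing $s$ explicitly, whereas the paper evaluates the infimum by a change of variable and AM--GM; both computations are correct and yield the identical constants.
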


\section{Estimation and its asymptotic properties}\label{section:3}
In this section, we will derive the asymptotic results for the estimator with an increasing sub-Gamma degree sequence. 
Note that $\E(a_{ij})$ only depends on the $e^{\alpha_{i}^{*}+ \alpha_{j}^{*}+\varepsilon_{ij}(\alpha_{i}^{*}+\alpha_{j}^{*})}$.
Let $\mathbf{d}=(d_{1}, \ldots, d_{n})^{\top}$ be the degree sequence of graph $\mathcal{G}_{n}$.
We assume that random variables $\{ e_{i}\}_{i=1}^n$ are mutually independent and distributed in sub-gamma distributions $\{{\rm{sub}}\Gamma(\upsilon_i,c_i)\} _{i = 1}^n$ with respective parameters $\{(\upsilon_i,c_i)\} _{i = 1}^n$. Then we observe the noisy sequence $\tilde{d}$ instead of $d$, where
\begin{equation}\label{equation:df}
\begin{array}{lcl}
\tilde{d}_{i} & = & d_{i} + e_{i},~~i=1, \ldots, n. \\
\end{array}
\end{equation}
We use moment equations to estimate the degree parameter with the noisy sequence $\tilde{d}$ instead of $d$.
Define a system of functions:
\begin{gather*}
    F_i(\bs{\alpha}) := \tilde{d_{i}} - \E(d_{i}) =\tilde{d_{i}} - \sum_{j\neq i}^n e^{\alpha_{i} + \alpha_{j} + \varepsilon_{ij}(\alpha_{i} + \alpha_{j})},~~i=1,\ldots, n, \\
    F(\bs{\alpha}) = \big(F_1(\bs{\alpha}), \ldots, F_n(\bs{\alpha}) \big)^\top.
\end{gather*}
Now, we define our estimator $\widehat{\bs{\alpha}}$ as the solution to the equation $F(\bs{\alpha})=0$, i.e.,
\begin{equation}\label{estimator_def}
    \widehat{\bs{\alpha}} := \{ \bs{\alpha} : F(\bs{\alpha})=0 \}
\end{equation}
It is not hard to see that the estimator is actually induced by the moment equation $\mathbf{\tilde{d}} = \E (\mathbf{d})$.
\vspace{2ex}

These asymptotic results of $\widehat{\bs{\alpha}}$ hold for all $\varepsilon$ satisfying the following condition.
\begin{assumption}\label{assumption:2}
For all pairs of node $i$ and node $j$, all choices of $k,l$ and $m$ $(k,l,m=1,...,n)$, the function $\varepsilon_{i,j}$, $\partial \varepsilon_{i,j} / \partial \alpha_{k} $, $\partial \varepsilon^{2}_{i,j} / \partial \alpha_{k} \partial \alpha_{l} $, and $\partial \varepsilon^{3}_{i,j} / \partial \alpha_{k}^{*} \partial \alpha_{l}  \partial \alpha_{m} $, are sub-exponential in $\alpha_{i} +\alpha_{j} $. That is, there exists a constant $M_{0}$ such that the absolute values of these functions are bounded by $M_{0} \exp (\alpha_{i} + \alpha_{j})$.
\end{assumption}
Note that the solution to the equation $F({\boldsymbol{\alpha}})=0$ is precisely the moment estimator. Here, we consider the symmetric parameter space
\[D = \{ {\boldsymbol{\alpha}} \in \mathbb{R}^{n}: -Q_{n}\leq \alpha_{i} + \alpha_{j} \leq Q_{n}, \ Q_{n} >0, \ 1\leq i<j \leq n\}.\]
The uniform consistency of $\widehat{\boldsymbol{\alpha}}$ and asymptotic distribution of the parameter estimator are stated as follows, and the proofs are given in Appendix.

\begin{theorem}[Consistency]\label{theorem:1}
If
\begin{equation*}
    e^{Q_{n}+e^{Q_{n}}} = \left(\frac{\sqrt{(n-1)\log(n-1)}+\sqrt{2\upsilon \log (2n)} + c  \log (2n)}{n} \right)^{-1/34}
\end{equation*}
and  $\max_{i=1,...,n}\upsilon_{i} = \upsilon$, $\max_{i=1,...,n}c_{i} = c$, then as $n \rightarrow \infty$, the estimator $\widehat{\bs{\alpha}}$ exists and satisfies
\begin{equation}\label{consistency-unified}
\|\bs{\widehat{\alpha}} - \bs{\alpha}^{*} \|_\infty = O_p \left( \frac{1}{n}e^{15Q_{n}+e^{Q_{n}}} \big(\sqrt{n \log n }+\sqrt{2\upsilon \log (2n)} + c  \log (2n) \big) \right) = o_p(1).
\end{equation}
\end{theorem}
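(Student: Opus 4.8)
The plan is to prove existence of the estimator $\widehat{\bs{\alpha}}$ and the uniform consistency bound simultaneously via a fixed-point argument applied to the map induced by the moment equation $F(\bs{\alpha})=0$. The core idea is that $F$ is a smooth function whose Jacobian is diagonally dominant (this is the structural feature that makes $\beta$-type models tractable), so one can invert $F$ locally around the truth $\bs{\alpha}^*$ and control the solution by bounding the stochastic fluctuation $\tilde{\mathbf{d}}-\E(\mathbf{d})$ evaluated at $\bs{\alpha}^*$. First I would decompose this fluctuation as $\tilde d_i - \E(d_i) = (d_i - \E d_i) + e_i$, i.e. the centered degree plus the privacy noise. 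The centered degree $d_i - \E d_i = \sum_{j\neq i}(a_{ij}-\E a_{ij})$ is a sum of $n-1$ independent bounded (hence sub-Gamma) terms, so by Lemma~\ref{sub-GammaMAX} its maximum over $i$ is of order $\sqrt{(n-1)\log(n-1)}$; the noise term $\max_i|e_i|$ is controlled by the same maximal inequality applied to the $\{\mathrm{sub}\Gamma(\upsilon_i,c_i)\}$ variables, giving $\sqrt{2\upsilon\log(2n)}+c\log(2n)$. Adding these reproduces exactly the numerator appearing in \eqref{consistency-unified}, so $\|F(\bs{\alpha}^*)\|_\infty = O_p\big(\sqrt{n\log n}+\sqrt{2\upsilon\log(2n)}+c\log(2n)\big)$.

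Next I would set up the Newton-type / contraction scheme. Writing the Jacobian $J(\bs{\alpha}) = \partial F/\partial\bs{\alpha}$, Assumption~\ref{assumption:2} guarantees that the entries of $J$ and its derivatives are bounded by multiples of $\exp(\alpha_i+\alpha_j)$, hence by $e^{Q_n}$-type factors on the parameter space $D$. The off-diagonal entry of $J$ is $\partial \E(d_i)/\partial\alpha_j = -\,e^{\alpha_i+\alpha_j+\varepsilon_{ij}}(1+\varepsilon'_{ij})$ and the diagonal entry is the sum over $j\neq i$ of analogous terms; because each diagonal entry aggregates $n-1$ positive contributions while each off-diagonal is a single term, $J$ is (up to sign and the $\varepsilon$-correction) diagonally dominant with diagonal magnitude of order $n\,e^{-Q_n}$ from below. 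I would then either quote or re-derive the approximate-inverse bound for such matrices, showing $\|J^{-1}\|_\infty$ is controlled by a factor polynomial in $e^{Q_n}$ and $1/n$. The key propagation-of-error step is the standard inequality $\|\widehat{\bs{\alpha}}-\bs{\alpha}^*\|_\infty \lesssim \|J^{-1}\|_\infty \cdot \|F(\bs{\alpha}^*)\|_\infty$, valid once the second-order remainder (governed by the third-derivative bounds in Assumption~\ref{assumption:2}) is shown to be negligible on a ball of the claimed radius.

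For rigor I would make the fixed-point explicit: define $\Phi(\bs{\alpha}) = \bs{\alpha} - J(\bs{\alpha}^*)^{-1}F(\bs{\alpha})$ and show $\Phi$ maps a closed ball $\overline{\Omega(\bs{\alpha}^*,r)}$ into itself and is a contraction there, with $r$ of the order given by the right-hand side of \eqref{consistency-unified}. Existence of $\widehat{\bs{\alpha}}$ then follows from the Brouwer/Banach fixed-point theorem, and the radius bound gives consistency. This is where the precise exponent $1/34$ and the growth rate $e^{15Q_n+e^{Q_n}}$ enter: they are exactly the bookkeeping needed to ensure the product $\|J^{-1}\|_\infty\cdot\|F(\bs{\alpha}^*)\|_\infty$ shrinks and that the contraction constant stays below one, i.e. the self-mapping and contraction conditions force $r\cdot(\text{curvature}) = o(r)$. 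The main obstacle I anticipate is precisely this constant-tracking: controlling $J^{-1}$ uniformly over the expanding parameter space $D$ while the entries blow up like $e^{Q_n}$ and $e^{e^{Q_n}}$, and simultaneously verifying that the Taylor remainder is dominated so the self-map stays inside the ball. Getting the powers of $e^{Q_n}$ to balance against the $1/n$ gain from diagonal dominance — so that the final rate is genuinely $o_p(1)$ under the stated scaling of $e^{Q_n+e^{Q_n}}$ — is the delicate quantitative heart of the argument, whereas the probabilistic inputs (the two maximal inequalities) are comparatively routine given Lemmas~\ref{sub-GammaConcentration} and \ref{sub-GammaMAX}.
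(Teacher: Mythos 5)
Your plan follows essentially the same route as the paper: the fluctuation $\tilde d_i-\E(d_i)$ is split into the centered degree (bounded via a $\sqrt{(n-1)\log(n-1)}$ maximal inequality) plus the sub-Gamma noise (bounded via Lemma~\ref{sub-GammaMAX}), exactly as in the paper's Lemma~\ref{lemma:4}, and the existence/consistency step is the same Newton-type argument exploiting diagonal dominance of the Jacobian (the class $\mathcal{L}_n(m,M)$ and its approximate inverse $S$) to get $\|\widehat{\bs{\alpha}}-\bs{\alpha}^*\|_\infty\lesssim \|J^{-1}\|_\infty\|F(\bs{\alpha}^*)\|_\infty$, which the paper implements through the Newton--Kantorovich theorem (Lemma~\ref{lemma:1}) rather than your frozen-Jacobian Banach contraction. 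The two fixed-point formulations are interchangeable here, so the proposal is correct and matches the paper's proof in all substantive respects.
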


We use the Newton-Kantovorich theorem to prove the consistency of the estimators by constructing the Newton iterative sequence. This technical step is different from Chatterjee et al. (2011) and provides a simple proof.  The proof of the theorem is in Appendix.

\begin{theorem}[Asymptotic normality]\label{theorem:2}
Using the same notations as Theorem \ref{theorem:1}, if
$$\frac{1}{n}e^{45Q_{n}+e^{Q_{n}}} \left(\sqrt{(n-1)\log(n-1)}+\sqrt{2\upsilon \log (2n)} + c\log (2n)\right)^2 = o(n^{1/2})$$
then for any fixed $k\ge 1$, as
$n \to\infty$, 
 the vector consisting of the first $k$ elements of
$(B^{-1})^{1/2}(\widehat{\bs{\alpha} }-\bs{\alpha}^{*} )$ is asymptotically distributed as $N(0,\mathbf{I}_k)$, 
where $(B^{-1})^{1/2}=\operatorname{diag}(v_{11}^{1/2}, \ldots,
v_{nn}^{1/2})$ with
\begin{equation*}
    v_{ii}=\sum_{j \neq i}^{n}e^{\alpha_{i}^{*}+\alpha_{j}^{*}+\varepsilon_{i,j}(\alpha_{i}^{*},\alpha_{j}^{*})}\left(1+\frac{\partial \varepsilon_{ij}(\alpha_{i}^{*},\alpha_{j}^{*}) }{\partial \alpha_{i}^{*} } \right).
\end{equation*}
\end{theorem}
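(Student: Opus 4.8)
The plan is to establish a linear (Bahadur-type) representation of $\widehat{\bs\alpha}-\bs\alpha^*$ and then invoke a central limit theorem for its leading term. Write $g_i(\bs\alpha)=\sum_{j\neq i}e^{\alpha_i+\alpha_j+\varepsilon_{ij}(\alpha_i+\alpha_j)}$, so that $F_i(\bs\alpha)=\tilde d_i-g_i(\bs\alpha)$ and $F(\bs\alpha^*)=(\mathbf d-\E\mathbf d)+\mathbf e$. Since $\widehat{\bs\alpha}$ solves $F(\widehat{\bs\alpha})=0$ and is consistent by Theorem~\ref{theorem:1}, I would expand each $F_i$ to second order about $\bs\alpha^*$,
\[
0=F_i(\widehat{\bs\alpha})=F_i(\bs\alpha^*)-\sum_{j}v_{ij}(\widehat\alpha_j-\alpha_j^*)+\tfrac12(\widehat{\bs\alpha}-\bs\alpha^*)^\top H_i(\bar{\bs\alpha})(\widehat{\bs\alpha}-\bs\alpha^*),
\]
where $v_{ij}=\partial g_i/\partial\alpha_j$ evaluated at $\bs\alpha^*$ (its diagonal entries are exactly the $v_{ii}$ in the statement), $H_i$ is the Hessian of $F_i$, and $\bar{\bs\alpha}$ is an intermediate point. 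Collecting coordinates yields $V(\widehat{\bs\alpha}-\bs\alpha^*)=F(\bs\alpha^*)+R$, and hence $\widehat{\bs\alpha}-\bs\alpha^*=V^{-1}F(\bs\alpha^*)+V^{-1}R$, where $R$ is the vector of quadratic remainders.

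The key structural step is to replace $V^{-1}$ by its diagonal $S^{-1}=\operatorname{diag}(v_{11}^{-1},\dots,v_{nn}^{-1})$. Under Assumption~\ref{assumption:2} the off-diagonal entries $v_{ij}$ are controlled by the diagonal ones, so $V$ is diagonally dominant and lies in the class of matrices treated by \cite{yanxu2013}, whose inverse is uniformly close to $S^{-1}$ in the $\max$-norm; writing $V^{-1}=S^{-1}+W$ with $\|W\|_{\max}$ of smaller order reduces the representation to the scalar form
\[
\widehat\alpha_i-\alpha_i^*=\frac{[d_i-\E d_i]+e_i}{v_{ii}}+\big(WF(\bs\alpha^*)\big)_i+\big(V^{-1}R\big)_i .
\]

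The main obstacle is to show that, after multiplying by $v_{ii}^{1/2}$, both correction terms are $o_p(1)$ uniformly in $i$. For this I would bound $\|F(\bs\alpha^*)\|_\infty$ and the entries of $R$: the degree fluctuations are handled by Bernstein's inequality (Lemma~\ref{lm-Bernsteingm}) and the noise by the sub-Gamma concentration and maximum bounds (Lemmas~\ref{sub-GammaConcentration} and \ref{sub-GammaMAX}), while the Hessian entries are bounded through Assumption~\ref{assumption:2} together with the consistency rate $\|\widehat{\bs\alpha}-\bs\alpha^*\|_\infty$ supplied by Theorem~\ref{theorem:1}. The growth condition imposed on $Q_n$ in the hypothesis is precisely what forces $v_{ii}^{1/2}\big[(WF(\bs\alpha^*))_i+(V^{-1}R)_i\big]=o_p(1)$; carrying these estimates through while tracking the $e^{Q_n}$ and $e^{e^{Q_n}}$ factors is the delicate part of the argument.

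It then remains to apply a central limit theorem to the leading term $v_{ii}^{-1/2}\{[d_i-\E d_i]+e_i\}$. Here $d_i-\E d_i=\sum_{j\neq i}(a_{ij}-p_{ij})$ is a sum of $n-1$ independent, centered, uniformly bounded summands, while $e_i$ is an independent sub-Gamma variable whose variance factor is of smaller order than $v_{ii}$ and therefore drops out of the limit; the Lindeberg--Feller condition holds because each normalized summand is $O(v_{ii}^{-1/2})$. For the joint statement over the first $k$ coordinates I would use the Cram\'er--Wold device: any fixed linear combination is again a triangular-array sum satisfying Lindeberg, and the only dependence among $d_1,\dots,d_k$ enters through the shared edges $a_{ij}$, whose covariance contribution is $O(1)$ and hence negligible against the scaling $v_{ii}\asymp n$. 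This produces the diagonal limiting covariance and the asymptotic $N(0,\mathbf I_k)$ law under the normalization $(B^{-1})^{1/2}=\operatorname{diag}(v_{11}^{1/2},\dots,v_{nn}^{1/2})$.
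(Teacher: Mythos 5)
Your proposal is correct and follows essentially the same route as the paper: a second-order Taylor expansion of the moment equations around $\bs{\alpha}^*$, inversion of the Jacobian $V$ with the diagonal approximation $S$ of \cite{yanxu2013} and a remainder $W$, showing that $(W F(\bs{\alpha}^*))_i$ and $(V^{-1}R)_i$ are negligible under the stated growth condition on $Q_n$, and then a CLT for the leading term $(d_i-\E d_i + e_i)/v_{ii}$ in which the sub-Gamma noise is asymptotically negligible. The only cosmetic difference is that the paper outsources the final Lindeberg--Feller/Cram\'er--Wold step to Proposition 1 of \cite{yan2016undirectedrandom} rather than arguing it directly.
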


The proof of the theorem is in Appendix.

\section{Numerical studies}\label{section 4}
In this section, we will evaluate the asymptotic result in  Theorems \ref{theorem:2} through numerical simulations and a real data example. The simulation will be conducted using Hermite distributions which we have introduced and proved its properties under the framework of Section 2.

\subsection{Simulation studies}
For the simulation study of all models $M_{\varepsilon}$, we set $\alpha_{i}^{*} = i*L/n$ for $i=1,...,n$. Other parameter settings in simulation studies are listed as follows. We consider three different values $-\log(\log(n))^{1/3}$, $ -\log(\log(n))^{1/2}$, and $-\log(\log(n))$ for $L$ in case of model $M_{log}$. For model $M_{logit}$, we consider three different values $L=0$, $\log(\log(n))$ and $\log(n)^{1/2}$. In case of model $M_{cloglog}$, we set three different values $L=\log(\log(\log(n)))^{3/2}, \log(\log(\log(n)))^{5/4}$ and $\log(\log(\log(n)))$. The noise $e_{i}(i=1,...,n)$ is the difference between two independent and identically distributed Hermite distributions (see Definition \ref{herm}). We consider three cases of Hermite distribution where parameters $a_1=0.01$, $a_2=(\Lambda-0.01)/4, m=2$; $a_1=\Lambda-0.01$, $a_2=0.025$, $m=2$ and $a_1=4*\Lambda/5$, $a_2=\Lambda/5$, $m=2$. Here, we set $\Lambda = 2*\exp(-\lambda_{0}/2)/(1-\exp(-\lambda_{0}/2))^{2}$ and $\lambda_{0}=2$.
We consider two values for $n=100$ and $n=200$. Note that by Theorems \ref{theorem:2}, $\widehat{\xi}_{ij} =(\widehat{\alpha}_i + \widehat{\alpha}_j-(\alpha_{i}^{*} + \alpha_{j}^{*}))/(1/\widehat{v}_{ii}+1/\widehat{v}_{ii})^{1/2}$ is an asymptotically normal distribution, where $\hat{v}_{ii}$ is the estimator of $v_{ii}$ by replacing $\alpha_i$ with $\hat{\alpha}_i$.
The quantile-quantile(QQ) plots of $\xi_{ij}$ are drawn. We ran $10000$ simulations for each scenario.

We simulate with $n=100$, $n=200$, three values for $L$ and three cases of Hermite distribution to find that the QQ-plots for each combination are similar. In order to save the place, we only present the QQ plots of three model in Figure \ref{Fig1:log}, Figure \ref{Fig2:logit} and Figure \ref{Fig3:cloglog} when $n=200$, $a_1=4*\Lambda/5$, $a_2=\Lambda/5$, and $m=2$ for each case. The horizontal and vertical axes are the theoretical and empirical quantiles respectively, and the straight lines correspond to the reference line $y=x$. In Figure \ref{Fig1:log}, we first observe that the empirical quantiles agree well with the ones of the standard normality of $\widehat{\xi_{ij}}$, expect for pair $(n/2, n/2+1)$ and $(n-1, n)$ when $L=-\log(\log(n))$ in the case of model $M_{log}$. For the case of model $M_{logit}$, there are no notable derivations from the standard normality for each scenario in Figure \ref{Fig2:logit}. We also observe that the empirical quantiles agree well with the ones of the standard normality of $\widehat{\xi}_{ij}$, expect for pair $(n/2, n/2+1)$ and $(n-1, n)$ when $L=\log(\log(\log(n)))$ in Figure \ref{Fig3:cloglog}.

The coverage probability of the $95\%$ confidence interval for $\alpha_i-\alpha_j$, the length of the confidence interval and the frequency that the MLE did not exist are reported in Table \ref{tab:log:1}, Table \ref{tab:logit:2} and Table \ref{tab:cloglog:3}. We can see that the length of estimated confidence interval increases as $L$ increases for fixed $n$, and decreases as $n$ increases for fixed $L$.

\subsection{ A data example}
We use the KAPFERER TAILOR SHOP network dataset created by Bruce Kapferer which is downloaded from \url{http://vlado.fmf.uni-lj.si/pub/networks/data/ucinet/ucidata.htm}. In each study they obtain measures of social interaction among all actors. In this network data, $``1''$ represents they have a friend relationship between two actors, otherwise, it is denoted as $``0''$. Because the estimate $\hat{\alpha}$ does not exist when the degree is zero, we remove the vertex 17 and 22 whose degree is zero before analysis, so the network with the left 37 vertices in each table remains.
When the parameters of three kinds of noise distribution function $e_{i}(i=1,...,n)$ are set, the parameter estimation is similar in case of model $M_{logit}$, $M_{log}$ and $M_{cloglog}$.
Thus we here only show the parameter estimation under the case of noise distribution function parameter for $a_1=4*\Lambda/5$, $a_2=\Lambda/5$ and $m=2$. From Figure \ref{Fig4}, we first observe the scatter plots of the noisy degree sequence $\tilde{d}$ corresponding to the parameter estimation $\hat{\alpha}$ under model $M_{\varepsilon}$ and the value of $\hat{\alpha}$ increases as the number of $\tilde{d}$ climbs. The larger the estimated parameters $\hat{\alpha}$, the actors have more friends. For these three cases of model $M_{\varepsilon}$, the estimated parameters and their standard errors as well as the $95\%$ confidence intervals and the size of noisy degree sequences are reported in Table \ref{tab:example:log}, Table \ref{tab:example:logit} and Table \ref{tab:example:cloglog} respectively.
The value of estimated parameters reflects the corresponding size of noisy degrees. For example, the large five degrees are $26,17,16,15,14$ for vertices $16,18,32,11,12$ which also have the top five influence parameters at $0.36,-0.05,-0.12,-0.18,-0.25$. On the other hand, the five vertices with smallest parameters $-2.89,-2.19,-1.79,-1.28,-1.10$ have degrees at $1,2,3,5,6$. In Table \ref{tab:example:logit} and Table \ref{tab:example:cloglog},
 the larger the parameter $\hat{\alpha}$, the greater the degree $\tilde{d}$ of the node. This is the same as the conclusion in Table \ref{tab:example:log}.

\section{Summary and discussion} \label{section 5}
In this paper, we release the degree sequences of the class of binary networks under the sub-Gamma noisy mechanism.
We establish the asymptotic result including the consistency and asymptotically normality of the parameter estimator when the number of parameters goes to infinity. By using the
Newton-Kantorovich theorem, we try to ignore adding noisy process and obtain the existence
and consistency of the parameter estimator satisfying equation $\mathbf{\tilde{d}} = \E (\mathbf{d})$. Furthermore, we give some simulation results to illustrate that the asymptotic normality behaves well under model $\emph{M}_{\log}$, $\emph{M}_{logit}$ and $ \emph{M}_{cloglog}$. However, an edge in networks takes not only binary values but also weighted edges in many scenarios. We will investigate null models for these directed weighted networks in the future. It is worth noting that the conditions imposed on $Q_{n}$,$a_1$, $a_2$, and $m=2$ may not be the most possible. In particular, the conditions guaranteeing the asymptotic normality are stronger than those guaranteeing the consistency. Simulation studies suggest that the conditions on $Q_{n}$ might be relaxed. It can be noted that the asymptotic behavior of the parameter estimator depends not only on $Q_{n}$ $a_1$, $a_2$, and $m=2$, but also on the configuration of all the parameters, We will investigate this in future studies.

In this paper we derived individual parameter asymptotic properties, and we can also study on a linear combination of all the parameter estimation in binary networks with noisy degree sequence in the future work. In our paper, we only consider the model heterogeneity parameter. In network data, the second distinctive feature inherent in most natural networks is the homophily phenomenon. \cite{Yan2019} established the uniform consistency and asymptotic normality of the heterogeneity parameter and homophily parameter estimators. On the other hand, sub-Weibull variables, as an extension of sub-Gamma variables, enable variables have heavier tails, which may also be consider in networks models. Fortunately, there have been some articles investigating concentration of sub-Weibull variables, see \cite{zhang2021sharper} for instance. And we further investigate a central limit theorem
for a linear combination of all the maximum likelihood estimators of degree parameter when the number of nodes goes to infinity[\cite{luo2020asymptotic}]. And the asymptotic theory of the affiliation network model with noise sequence is also worth further study[\cite{luo2021affiliation}]. We will investigate these aspects in future studies. \par

\section{Appendix}

\subsection{Two-side discrete compound Poisson and Hermite distributions}
The negative binomial random variable belongs to the exponential family when the dispersion parameter is known. But if the parameter in negative binomial random variable $X$ is unknown in real world problems \citep{zhang2022elastic}, it does not belong to the exponential family. Whereas, it is well-known that Poisson and negative binomial distributions belong to the family of discrete infinitely divisible distributions (also named as discrete compound Poisson distributions); see \cite{zhang2014notes} and the references therein.

It should be noted that the discrete Laplace random variable is the difference of two i.i.d. geometric distributed random variables (see Proposition $3.1$ in \cite{Inusah2006A}). The geometric distribution as a class of infinitely divisible distribution is a special case of discrete compound Poisson distribution. The difference of geometric noise-addition mechanism can be flexibly extended to the difference between two i.i.d. (or independent) discrete compound Poisson random variables (see Definition $4.2$ of \cite{zhang2016characterizations}). In fact, the difference between two independent discrete compound Poisson random variables follows the infinitely divisible distributions with integer support; see Chapter IV of \cite{steutel2003infinite}.

\begin{definition}\label{def-dcp}
We say that $Y$ is discrete compound Poisson (DCP) distributed if the
characteristic function of $Y$ is
\begin{equation}
\label{eq:cf} {\varphi _{Y}}(t) = \mathrm{{E}}
{e^{\mathrm{{i}}tY}} = \exp \Biggl\{ \sum_{k= 1}^{\infty }{{
\alpha _{k}}\lambda \bigl({e^{\mathrm{{i}}t
k}} - 1 \bigr)} \Biggr\}
\quad ( t \in \mathbb{R}),
\end{equation}
where $({\alpha _{1}}\lambda ,{\alpha _{2}}\lambda , \ldots )$ are
infinite-dimensional parameters satisfying $\sum_{i = 1}^{\infty }
{\alpha _{k}} = 1$, ${\alpha _{k}} \ge 0$, \mbox{$\lambda > 0$}. We denote it as
$Y \sim \operatorname{DCP}({\alpha _{1}}\lambda ,{\alpha _{2}}\lambda ,
\ldots )$.
\end{definition}
Based on the \eqref{eq:cf}, the discrete compound Poisson random variable $X_i$ has the weighted Poisson decomposition
\begin{equation*}
X:= \sum_{k = 1}^{\infty }{k{N_{k}}},~~\text{where }{N_{k}}\overset{ind.}{\sim} {\rm Poisson}({\alpha _{k}}\lambda),
\end{equation*}
so we have $\E X = \lambda \sum\limits_{k = 1}^\infty  k {\alpha _k}$.

\cite{prekopa1952composed} first considered the difference between two composed Poisson distributions (we name it two-side composed Poisson distribution in the following context), of which the characteristic function is
\begin{equation}\label{cf-1}
    \varphi(t)=\exp \left\{\sum\limits_{k\in \mathbb{Z}\backslash\{0\}}C_{k}(e^{i\lambda_{k}t}-1)\right\},
\end{equation}
where $C_{k},\lambda_{k}\ge 0$ and $\sum\limits_{k\in \mathbb{Z}\backslash\{0\}}C_{k}<\infty$. We notice \eqref{cf-1} degenerates to \eqref{eq:cf} when $\lambda_{k}=k \,(k\ge 1)$ and $C_{k}=0 \,(k\le 0)$.

Notice the characteristic function of Levy process $Z(t)$ can be represented by
\begin{equation*}
    \varphi(\theta)=\operatorname{E}[e^{i\theta Z(t)}]=
    \exp\left\{ ait\theta-\frac{1}{2}\sigma^{2}t\theta^{2}+t\int_{\mathbb{R}\backslash 0}(e^{i\theta x}-1-i\theta x \mathbb{I}_{|x|<1})w(dx) \right\}
\end{equation*}
using Levy-Khinchine formula, where $a\in \mathbb{R},\, \sigma>0$ and $\mathbb{I}_{|x|<1}$ as the indicator function. $w$ is usually referred as the Levy measure, which is a non-negative measure that satisfies $\int_{\mathbb{R}\backslash 0}\min\{x^{2},1\}w(dx)<\infty$.

Construct the following Levy measure $w(d x)=\alpha_{k}\lambda d \delta_{k}$, where $\sum\limits_{k\in \mathbb{Z}\backslash\{0\}}\alpha_{k}=1$ and $\delta_{k}$ is the Dirac measure. According to the definition of $w(dx)$, we have
\begin{equation*}
    ait\theta-\frac{1}{2}\sigma^{2}t\theta^{2}=t\int_{\mathbb{R}\backslash 0}(i\theta x \mathbb{I}_{|x|<1})w(dx)=0.
\end{equation*}
Therefore, the characteristic function of two-side CPD denoted by $Z$ is
\begin{equation}\label{cf-2}
    \operatorname{E}[e^{itZ}]=\exp \left\{\lambda \sum\limits_{k=1}^{\infty}\alpha_{k}(e^{ikt}-1)+\mu\sum\limits_{k=1}^{\infty}\beta_{k}(e^{-ikt}-1) \right\},
\end{equation}
which can also be used as the definition of two-side CPD. We say a random variable has two-side CPD if the characteristic function of it satisfies the \eqref{cf-2}.

Notice the discrete composed Poisson distribution can be generated by Poisson process. Similarly, the two-side CPD can be generated by the difference between two Poisson processes with different parameters. We introduce the two-side Poisson distribution later.

When the characteristic function of random variable $Z$ satisfies \eqref{cf-2} with $\alpha_{1}=\beta_{1}=1$ and $\alpha_{k}=\beta_{k}=0\,(k>1)$, we say $Z$
satisfies tow-side Poisson distribution, of which the characteristic function is $\exp\{\lambda(e^{i t}-1)+\mu(e^{-it}-1)\}$ and the p.d.f. is in the form
\begin{equation*}
    \pr (Z=k)=e^{-(\lambda+\mu)}\left( \frac{\lambda}{\mu} \right)^{k/2}I_{|k|}(2\sqrt{\lambda\mu}), \qquad k\in \mathbb{Z},
\end{equation*}
where $I_{n}(x)$ is the modified Bessel function of the first kind.
$I_{n}(x)=\sum\limits_{k=0}^{\infty}\frac{1}{k!(k+n)!}\left(\frac{x}{2} \right)^{2k+n}$ satisfies: (i) the expansion $\exp\{\frac{1}{2}x(z+z^{-1})\}=\sum\limits_{n\in \mathbb{Z}}I_{n}(x)z^{n}; $ 
(ii) $I_{n}(x)=I_{-n}(x)$.

Two special cases occur when we set $\mu=0$ and $\lambda=0$ in a two-side Poisson distribution $Z$ respectively. When $\mu=0$, $Z$ degenerates to Poisson distribution, consistent with the fact that
\begin{equation*}
    \pr (Z=k)=\lim\limits_{\mu\rightarrow 0}\left(\frac{\lambda}{\mu}\right)^{k/2}\sum\limits_{i=0}^{\infty}\frac{(\sqrt{\lambda\mu})^{2i+|k|}}{e^{\lambda+\mu}i!(i+|k|)!}
    =\frac{\lim\limits_{\mu\rightarrow0}\left(\frac{\lambda}{\mu}\right)^{k/2}(\sqrt{\lambda\mu})^{|k|}}{e^{\lambda}|k|!}=\frac{\lambda^{k}e^{-\lambda}}{k!}\,(k\ge 0).
\end{equation*}
When $\mu=0$, we derive the p.d.f. of $Z$ that
\begin{equation*}
    \pr (Z=k)=\lim\limits_{\lambda\rightarrow 0}\left(\frac{\lambda}{\mu}\right)^{k/2}\sum\limits_{i=0}^{\infty}\frac{(\sqrt{\lambda\mu})^{2i+|k|}}{e^{\lambda+\mu}i!(i+|k|)!}
    =\frac{\lim\limits_{\lambda\rightarrow0}\lambda^{k/2+|k|/2}\mu^{-|k|/2}u^{|k|/2}}{e^{\lambda}|k|!}=\frac{\mu^{-k}e^{-\mu}}{(-k)!}\,(k\le 0),
\end{equation*}
which shows that $Z$ is in a negative Poisson distribution.

\iffalse
Next we provide some equivalent descriptions for two-side CPD (TCPD).

(i) If $X\sim \text{DCP}(\alpha_{1}\lambda,\alpha_{2}\lambda,\cdots)$, $Y\sim \text{DCP}(\beta_{1}\mu,\beta_{2}\mu,\cdots)$, $X$ and $Y$ are independent, the difference $Z=X-Y$ has the characteristic function satisfying \eqref{cf-2}, which is denoted by $Z\sim \text{TCPD}(\alpha_{1}\lambda,\alpha_{2}\lambda,\cdots;\beta_{1}\mu,\beta_{2}\mu,\cdots)$. $Z$ can also be represented by $Z=\sum\limits_{i=1}^{N_{+}}X_{i}-\sum\limits_{i=1}^{N_{-}}Y_{i}$, where $X_{i}$, $Y_{i}$, $N_{+}$ and $N_{-}$ are independent satisfying $P(X_{i}=j)=\alpha_{j}$, $P(Y_{i}=j)=\beta_{j}$, $N_{+}\sim P(\lambda)$ and $N_{-}\sim P(\mu)$.

(ii) $Z=\sum\limits_{i=1}^{\infty}i(N_{i}^{+}-N_{i}^{-})$, where $N_{i}^{+}$ and $N_{i}^{-}$ are independent satisfying $N_{i}^{+}\sim P(\alpha_{i}\lambda)$
and $N_{i}^{-}\sim P(\beta_{i}\mu)$.

(iii) $Z=\sum\limits_{i=1}^{N}Z_{i}$, where $Z_{i}$, $N$ are independent satisfying $N\sim P(\lambda+\mu)$ and
\begin{equation*}
    P(Z_{i}=k)=\left\{
\begin{array}{rcl}
\frac{\alpha_{k}\lambda}{\lambda+\mu}      &     & k\ge 1\\
\frac{\beta_{k}\mu}{\lambda+\mu}   &     & k\le -1
\end{array} \right..
\end{equation*}

(iv) $Z$ has integer-valued infinite divisible distribution.

The correctness of (i)-(iii) can be easily verified by computing their characteristic functions respectively. According to the Lévy-Khinchine formula, we get (iv).
\fi
\vspace{2ex}
If ${\alpha _{k}=0}$ for $k \ge 3$ in Definition \ref{def-dcp}, there is a special kind of DCP which is often used in social analysis and biology and is called Hermite distribution. 
\begin{definition}\label{herm}
    We say that $Y$ is in a Hermite distribution with parameters $(\Lambda, a_1, a_2)^{\top}$ if 
    \begin{equation*}
        Y := N_1 + 2 N_2, \qquad ~~\text{where }{N_{k}}\overset{ind.}{\sim} {\rm Poisson}(\Lambda{a _{k}})
    \end{equation*}
    with $a_1, a_2 > 0$ and $a_1 + a_2 = 1$. We denote it by $Y \sim \operatorname{Herm} (\Lambda, a_1, a_2)$.
\end{definition}
\noindent It can be seen that Hermite distribution is actually DCP with only first two active elements. In the next result, we show the sub-Gamma concentration for the sum of independent Hermite random variables. Hence the two-side Hermite distribution also enjoys sub-Gamma concentration.

\begin{theorem}
\label{thm:cdp}
Given ${Y_{i}} \sim \operatorname{DCP}\big({\alpha _{1}}(i)\lambda (i), \ldots
,{\alpha _{r}}(i)\lambda (i) \big)$ independently for $i = 1,2, \ldots ,n$, and
${\sigma _{i}^2} := \operatorname{var}{Y_{i}} = \lambda (i)\sum_{k = 1}
^{r} {{k^{2}}{\alpha _{k}}(i)} $, for non-random weights $\{w_i\}_{i=1}^n$ with $w= \mathop {\max }\limits_{1 \le i \le n}| w_i|>0$, we have
\begin{equation} \label{eq:ci1}
\pr \left\{ \Big| {\sum\limits_{i = 1}^n {w_i({Y_i} - {\rm{E}}{Y_i})} } \Big| \ge w \Big[ \big(2x\sum\limits_{i = 1}^n {\sigma _i^2} \big)^{1/2}  + rx/3 \Big] \right\}\le 2{e^{ - x}}, \qquad \forall x > 0.
\end{equation}
Specially, if $Y_i \, \overset{iid.}{\sim} \, \operatorname{Herm} (\Lambda, a_1, a_2)$, we have 
\begin{equation*}
    \pr \left\{ \big| \overline{Y} - \Lambda (a_1 + 2 a_2) \big| \geq \sqrt{\frac{2 x \Lambda (a_1 + 4 a_2)}{n}} + \frac{2 x}{3 n}\right\} \leq 2 e^{-x}, \qquad \forall x > 0.
\end{equation*}
\end{theorem}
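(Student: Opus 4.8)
The plan is to show that each centred summand $w_i(Y_i-\E Y_i)$ is sub-Gamma in the sense of Definition \ref{def: sub-Gamma}, and then to invoke the closure-under-addition and tail bounds already packaged in Lemma \ref{sub-GammaConcentration}. First I would compute the cumulant generating function of the centred DCP variable. Passing from the characteristic function in Definition \ref{def-dcp} to the moment generating function (the jump part is finitely supported since $\alpha_k(i)=0$ for $k>r$, so this is legitimate), and using $\E Y_i = \lambda(i)\sum_{k=1}^r k\,\alpha_k(i)$ recorded after Definition \ref{def-dcp}, one obtains the clean identity
$$\log \E e^{s(Y_i - \E Y_i)} = \lambda(i)\sum_{k=1}^r \alpha_k(i)\bigl(e^{sk} - 1 - sk\bigr),$$
which isolates all the difficulty into bounding the single elementary function $e^u-1-u$.

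The technical heart is the two-sided estimate of this function with the correct constants. For the right tail ($s>0$) I would use the series comparison $e^u-1-u=\sum_{j\ge2}u^j/j!\le\tfrac{u^2}{2}\sum_{m\ge0}(u/3)^m=\tfrac{u^2/2}{1-u/3}$, valid for $0\le u<3$, which follows from the elementary inequality $j!\ge 2\cdot 3^{j-2}$ for $j\ge2$ (proved by induction). Setting $u=sk$ and using $k\le r$ to replace $1-sk/3$ by the uniform denominator $1-sr/3$, the cumulant bound becomes $\tfrac{s^2}{2}\cdot\frac{\sigma_i^2}{1-(r/3)s}$ with $\sigma_i^2=\lambda(i)\sum_{k}k^2\alpha_k(i)$. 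For the left tail ($s<0$) the simpler bound $e^{-u}-1+u\le u^2/2$ for all $u\ge0$ gives $\tfrac{s^2}{2}\sigma_i^2$, which is dominated by the same sub-Gamma expression. Hence $Y_i-\E Y_i\sim\mathrm{sub}\Gamma(\sigma_i^2,\,r/3)$.

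Then I would rescale and sum: since $aX\sim\mathrm{sub}\Gamma(a^2\upsilon,|a|c)$ whenever $X\sim\mathrm{sub}\Gamma(\upsilon,c)$, each $w_i(Y_i-\E Y_i)\sim\mathrm{sub}\Gamma(w_i^2\sigma_i^2,\,|w_i|r/3)$. By Lemma \ref{sub-GammaConcentration}(a) the sum $S_n:=\sum_i w_i(Y_i-\E Y_i)$ is $\mathrm{sub}\Gamma\bigl(\sum_i w_i^2\sigma_i^2,\,wr/3\bigr)$, the scale factor of the sum being the maximum $\max_i|w_i|\cdot r/3=wr/3$. Applying the second tail bound of Lemma \ref{sub-GammaConcentration}(b) gives $\pr\{|S_n|>(2x\sum_i w_i^2\sigma_i^2)^{1/2}+(wr/3)x\}\le2e^{-x}$; enlarging the threshold via $w_i^2\le w^2$, so that $(2x\sum_i w_i^2\sigma_i^2)^{1/2}\le w(2x\sum_i\sigma_i^2)^{1/2}$, yields exactly \eqref{eq:ci1}. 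The Hermite corollary is then just the case $r=2$, $\alpha_1(i)=a_1$, $\alpha_2(i)=a_2$, $\lambda(i)=\Lambda$, $w_i=1/n$, for which $\E Y_i=\Lambda(a_1+2a_2)$, $\sigma_i^2=\Lambda(a_1+4a_2)$, $\sum_i\sigma_i^2=n\Lambda(a_1+4a_2)$ and $w=1/n$; substituting and simplifying the radicals produces the stated bound.

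The step I expect to be the main obstacle is the sharp right-tail estimate of $e^u-1-u$ with the precise constant $1/3$ in the scale parameter, since this is exactly what pins down the $rx/3$ term and dictates the $2x/(3n)$ in the Hermite case; everything after it is bookkeeping with the scaling and additivity properties already established in Lemma \ref{sub-GammaConcentration}.
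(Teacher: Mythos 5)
Your proposal is correct and follows essentially the same route as the paper's proof: both compute the cumulant generating function of the centred DCP variable, bound $e^{u}-1-u$ by $\tfrac{u^{2}/2}{1-u/3}$ to obtain $\mathrm{sub}\Gamma(\sigma_i^2, r/3)$ (the paper folds the weights $w_i$ in from the start rather than rescaling afterwards), and then invoke Lemma \ref{sub-GammaConcentration}(a) and (b) for additivity and the tail bound. Your explicit justification of the constant $1/3$ via $j!\ge 2\cdot 3^{j-2}$ and your separate left-tail estimate are in fact slightly more careful than the paper's displayed chain, but the argument is the same.
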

\begin{proof}
To apply the above lemma, we need to evaluate the
log-moment-generating function of centered DCP random variables. Let
${\mu _{i}} =: \mathrm{{E}}{Y_{i}} = \lambda (i)\sum_{k = 1}
^{r} {k{\alpha _{k}}(i)}$, we have
\begin{align}\label{eq:Poissonmgf}
\log {\mathrm{{E}}} {e^{sw_i({Y_{t}} - {\mu _{i}})}} &= - sw_i{\mu _{i}} + \log
\mathrm{E} {e^{sw_i{Y_{i}}}} = - sw_i{\mu _{i}} + \log
{e^{\lambda (i)\sum _{k = 1}^{r} {{\alpha _{k}}(i)} ({e^{ksw_i}}
- 1)}}
\nonumber\\
&= \lambda (i)\sum_{k = 1}^{r}
{{\alpha _{k}}(i) \bigl({e^{ksw_i}} - ksw_i - 1 \bigr)} .
\end{align}
Hence, one can derive that
\begin{equation}\label{eq:less00}
\log {\mathrm{{E}}} {e^{sw_i({Y_{t}} - {\mu _{i}})}} \le \sum
_{k = 1}^{r} {{\alpha _{k}}(i)
\frac{\lambda (i) {{k^{2}}{w_i^{2}}s^{2}}}{{2(1 -
ksw_i)}}} \le
{\frac{ws^2\lambda (i)\sum_{k = 1}^{r}{{k^{2}}}{\alpha _{k}}(i)
}{{2(1 - rw|s|/3)}}} = \frac{{{\sigma _{i}^2}w^2{s^{2}}}}{{2(1 - rw|s|/3)}},~\left| s \right| \le \frac{3}{rw},
\end{equation}
where ${\sigma _{i}^2} = \lambda (i)\sum_{k = 1}^{r} {{k^{2}}{\alpha _{k}}(i)} $. Thus (\ref{eq:less00}) implies that $w_i({Y_{t}} - {\mu _{i}})\sim \mathrm{sub}\Gamma({{w^2}{\sigma _{i}^2}},rw/3)$.
\vspace{1ex}

From Proposition \ref{sub-GammaConcentration}(a), we obtain
$ S_n^w \sim \mathrm{sub}\Gamma({{w^2}\sum_{k = i}^n\sigma_i^2},rw/3)$. Then applying Proposition \ref{sub-GammaConcentration}(b), we get \eqref{eq:ci1}.
\end{proof}

\subsection{Proofs in Section 2}

First, we would like to introduce some equivalent definitions of sub-exponential variables. The detailed discussions and proofs can be seen in  \cite{Rigollet2019}.

\begin{proposition}[Characterizations of sub-exponential distributions]\label{prop: sub-exponential properties}
Let $X$ be a r.v. in $\R$ with $\E X = 0$. Then the following properties are equivalent (the parameters
  $K_i > 0$ are equal up to a constant
  factor)
\begin{enumerate}[\rm{(}1\rm{)}]
    \item \label{p: exponential tail}
      The tails of $X$ satisfy
      $
      \pr \{ |X| \ge t \} \le 2 e^{-t/K_1} \quad \text{for all } t \ge 0;
      $
   \item \label{p: exponential MGF}
    The MGF of $X$ satisfies
      $
      \E e^{\l X} \le e^{K_2^2 \l^2}~\text{for all}~|\l| \le \frac{1}{K_2};
      $
    \item \label{p: exponential moments}
      The moments of $X$ satisfy
      $
     (\E |X|^k)^{1/k} \le K_3 k\quad \text{for  integer}~k \ge 1;
      $
    \item \label{p: exponential MGF abs}
      The MGF of $|X|$ satisfies
      $
      \E e^{\l |X|} \le e^{K_4 \l} \quad \text{for all } 0 \le \l \le \frac{1}{K_4};
      $
    \item \label{p: exponential MGF finite}
      The MGF of $|X|$ is bounded at some point:
      $
      \E e^{|X|/K_5} \le 2.
      $
  \end{enumerate}

\end{proposition}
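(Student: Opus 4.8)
The plan is to establish the five characterizations as a single cycle of implications, so that each property implies the next and the cycle closes, thereby making all of them equivalent. Since the statement only claims the constants $K_i$ to agree up to absolute multiplicative factors, at every step I would freely rescale the constant produced by the previous implication. A convenient arrangement is the main cycle $(1)\Rightarrow(3)\Rightarrow(4)\Rightarrow(5)\Rightarrow(1)$, which places the tail bound, the moment bound, the MGF of $|X|$, and the bounded-MGF-of-$|X|$ conditions into one equivalence class; the mean-zero MGF condition (2) is then inserted into this class by proving $(3)\Rightarrow(2)$ and $(2)\Rightarrow(1)$. Only these last two implications will use the hypothesis $\E X=0$; all the $|X|$-based steps hold for an arbitrary random variable.

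For the routine directions: $(5)\Rightarrow(1)$ is an immediate Chernoff/Markov bound, since $\pr\{|X|\ge t\}=\pr\{e^{|X|/K_5}\ge e^{t/K_5}\}\le e^{-t/K_5}\,\E e^{|X|/K_5}\le 2e^{-t/K_5}$, giving $K_1=K_5$. For $(1)\Rightarrow(3)$ I would use the layer-cake identity $\E|X|^k=\int_0^\infty k t^{k-1}\pr\{|X|>t\}\,dt$, substitute the tail bound from (1), and recognize the integral as a Gamma function, obtaining $\E|X|^k\le 2K_1^k\,k!$ and hence $(\E|X|^k)^{1/k}\le 2^{1/k}K_1(k!)^{1/k}\le C K_1 k$ by Stirling. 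Finally $(4)\Rightarrow(5)$ follows by choosing a single admissible parameter value: taking $\lambda=(\log 2)/K_4\le 1/K_4$ in (4) gives $\E e^{(\log 2)|X|/K_4}\le e^{K_4\lambda}=2$, so $K_5=K_4/\log 2$.

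The substantive work lies in the Taylor-series steps and the Chernoff optimization. For $(3)\Rightarrow(4)$ I would expand $\E e^{\lambda|X|}=\sum_{k\ge0}\lambda^k\E|X|^k/k!$, bound $\E|X|^k\le(K_3k)^k$, and use $k^k/k!\le e^k$ to reduce to a geometric series $\sum_k(eK_3\lambda)^k$ convergent for $eK_3\lambda<1$; summing and applying $1/(1-u)\le e^{2u}$ for $u\le\tfrac12$ yields $\E e^{\lambda|X|}\le e^{K_4\lambda}$ on $\lambda\le 1/K_4$ with $K_4\asymp K_3$. The implication $(3)\Rightarrow(2)$ is the analogue for $X$, and here the mean-zero assumption is essential: writing $\E e^{\lambda X}=1+\sum_{k\ge2}\lambda^k\E X^k/k!$ kills the linear term, so the remainder is of order $\lambda^2$ and sums to the Gaussian-type form $e^{K_2^2\lambda^2}$. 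I expect this to be the main obstacle, because one must track the range of $\lambda$ carefully: the geometric series converges only for small $\lambda$, the bound $e^{K_2^2\lambda^2}$ is the correct behaviour only in that window, and the constant $K_2$ must be chosen large enough that the required range $|\lambda|\le 1/K_2$ falls inside the window where the summation is valid. To close the cycle, $(2)\Rightarrow(1)$ is a two-regime Chernoff argument: for $t>0$, $\pr\{X\ge t\}\le\inf_{0\le\lambda\le 1/K_2}\exp(-\lambda t+K_2^2\lambda^2)$, where the unconstrained optimum $\lambda=t/(2K_2^2)$ is admissible for $t\le 2K_2$ (giving the Gaussian bound $e^{-t^2/(4K_2^2)}$) and the boundary choice $\lambda=1/K_2$ is used for $t>2K_2$ (giving $e^{1-t/K_2}$); applying the same estimate to $-X$, whose bound in (2) is even in $\lambda$, and absorbing the small-$t$ regime and numerical constants into a rescaled $K_1\asymp K_2$ produces the two-sided tail (1).
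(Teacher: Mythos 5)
Your proposal is correct, and it is essentially the standard argument: the paper itself does not prove this proposition but simply defers to \cite{Rigollet2019}, where the equivalence is established by exactly the kind of implication cycle you describe (Markov/Chernoff for the tail directions, the layer-cake integral for moments, Taylor expansion of the MGF with the geometric-series bound, and the mean-zero cancellation of the linear term for property (2)). You correctly identify the two delicate points --- enlarging $K_2$ so that the range $|\lambda|\le 1/K_2$ sits inside the window where the series bound is valid, and absorbing the small-$t$ regime of the Chernoff bound into the constant $2$ --- so nothing is missing.
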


\begin{lemma}[Concentration for weighted sub-exponential sum]\label{sub-exponentialConcentration}
Let $\{ X_{i}\} _{i = 1}^n $ be independent $\{\operatorname{subE}(\lambda_i)\} _{i = 1}^n$ distributed with zero mean. Define $\lambda= \mathop {\max }\limits_{1 \le i \le n} \lambda_i>0$ and the non-random vector $\bm w := ({w_1}, \cdots ,{w_n})^{\top} \in {\mathbb{R}^n}$ with $w := \mathop {\max }\limits_{1 \le i \le n} |w_i|>0$, we have
\begin{enumerate}[\rm{(}a\rm{)}]
\item Closed under addition: $\sum\limits_{i = 1}^n {{w_i}{X_i}}  \sim {\rm{subE}}(\| \bm w \|_2{\lambda} )$;
\item
$\pr ( {| {\sum\limits_{i = 1}^n {{w_i}{X_i}} }| \ge t} ) \le 2e^{ { - \frac{1}{2}( {\frac{{{t^2}}}{{\left\|\bm w \right\|_2^2{\lambda ^2}}} \wedge \frac{t}{{w\lambda }}})} } = \left\{ {\begin{array}{*{20}{c}}
{2{e^{ - {t^2}/2\left\|\bm w \right\|_2^2{\lambda ^2}}},~~~~0 \le t \le \left\|\bm w \right\|_2^2\lambda /w}\\
{2{e^{ - t/2w\lambda }},~~~~~~~t > \left\|\bm w \right\|_2^2\lambda /w}
\end{array}} \right..$
\end{enumerate}
\end{lemma}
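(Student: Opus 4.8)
The plan is to reduce both parts to the one-dimensional sub-exponential MGF bound of Definition \ref{def:Sub-exponential} and then run a two-regime Chernoff argument. For part (a), I would first use independence to factorize the moment generating function of the weighted sum, writing $\E\exp(s\sum_{i} w_i X_i) = \prod_{i=1}^n \E\exp(sw_i X_i)$. Each factor is controlled by the defining bound $\E\exp(sw_i X_i)\le \exp((sw_i)^2\lambda_i^2/2)$, which is valid whenever $|sw_i|<1/\lambda_i$. Since $|w_i|\le w$ and $\lambda_i\le\lambda$, the single restriction $|s|<1/(w\lambda)$ guarantees all $n$ factor bounds simultaneously. Multiplying them and bounding $\sum_i w_i^2\lambda_i^2 \le \lambda^2\sum_i w_i^2 = \|\bm w\|_2^2\lambda^2$ yields $\E\exp(s\sum_i w_i X_i)\le \exp(s^2\|\bm w\|_2^2\lambda^2/2)$ for $|s|<1/(w\lambda)$. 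Finally, because $\|\bm w\|_2\ge w$, the admissible range $|s|<1/(w\lambda)$ contains $|s|<1/(\|\bm w\|_2\lambda)$, so the bound qualifies as $\operatorname{subE}(\|\bm w\|_2\lambda)$ in the sense of Definition \ref{def:Sub-exponential}.

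For part (b), I would apply the Chernoff bound to $S:=\sum_i w_i X_i$: for $0<s<1/(w\lambda)$, one has $\pr(S\ge t)\le \exp(-st)\,\E\exp(sS)\le \exp(-st + s^2\|\bm w\|_2^2\lambda^2/2)$ by part (a). The exponent is minimized at $s^\star = t/(\|\bm w\|_2^2\lambda^2)$, and I would split on whether $s^\star$ lies in the feasible interval. When $t\le \|\bm w\|_2^2\lambda/w$ the minimizer is admissible, and substituting it gives the Gaussian-type exponent $-t^2/(2\|\bm w\|_2^2\lambda^2)$. When $t>\|\bm w\|_2^2\lambda/w$ the constrained minimizer is the endpoint $s=1/(w\lambda)$; evaluating the exponent there leaves an extra term $\|\bm w\|_2^2/(2w^2)$, which I would absorb using the case hypothesis $\|\bm w\|_2^2<tw/\lambda$ to conclude the exponent is at most $-t/(2w\lambda)$, the exponential-type regime. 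A symmetric argument applied to $-S$ together with $\pr(|S|\ge t)\le \pr(S\ge t)+\pr(-S\ge t)$ supplies the factor of $2$, yielding the stated minimum of the two exponents with crossover at $t=\|\bm w\|_2^2\lambda/w$.

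The routine parts are the factorization and the algebra of the two cases. The one genuinely delicate point, and the step I would flag as the main obstacle, is the endpoint analysis in the heavy-tail regime: one must keep the distinction between the variance proxy $\|\bm w\|_2\lambda$ (controlling the Gaussian exponent) and the scale $w\lambda$ (controlling the linear exponent and, crucially, the validity range $|s|<1/(w\lambda)$), and then verify that the leftover quadratic term at the endpoint is dominated by exactly half of the linear term under the case condition. Producing the explicit constant $1/2$ and the precise crossover threshold, rather than an unspecified constant, is what requires this careful bookkeeping.
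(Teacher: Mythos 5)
Your proposal is correct: the MGF factorization under independence, the uniform validity range $|s|<1/(w\lambda)$, and the two-regime Chernoff optimization with the endpoint analysis all go through, and the containment $\|\bm w\|_2\ge w$ correctly justifies the $\operatorname{subE}(\|\bm w\|_2\lambda)$ claim in part (a). The paper itself gives no proof and simply cites \cite{Rigollet2019}, where exactly this argument appears, so your write-up supplies the standard proof the paper delegates to the reference.
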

\begin{proof}
    See \cite{Rigollet2019}.
\end{proof}

\subsubsection{The proof of Lemma \ref{prop:Psub-E} and Corollary \ref{Sub-expConcentration}}

\noindent \rm{(a)}. To verified \eqref{sub-exponential tail}, using exponential Markov's inequality, we have
$$
    \pr(|X| \geq t)= \pr \left(e^{|X / \|X\|_{{\psi _1}}|} \geq e^{t / \|X\|_{{\psi _1}}}\right) \leq e^{-t / \|X\|_{{\psi _1}}|} \mathrm{E} e^{|X / \|X\|_{{\psi _1}}|} \leq 2 e^{-t / \|X\|_{{\psi _1}}}
$$
where the last inequality stems from the definition of sub-exponential norm.

\noindent \rm{(b)}. From \eqref{sub-exponential tail}, we get
\begin{align*}
\E|X{|^k} &= \int_0^\infty  \pr (|X| \ge t)k{t^{k - 1}}dt \le 2k\int_0^\infty  {{e^{ - t/\|X\|_{{\psi _1}}}}} {t^{k - 1}}dt\nonumber, \\
[\text{let}~s = {t/\|X\|_{{\psi _1}}}]~~&={{2k}}\int_0^\infty  {{e^{ - s}}} {({s}\|X\|_{{\psi _1}})^{k - 1}}\|X\|_{{\psi _1}}ds= 2{\|X\|_{{\psi _1}}^k}{k}\Gamma( {{k-1}})= 2{\|X\|_{{\psi _1}}^k}k!.
\end{align*}
\rm{(c)}. Applying Taylor's expansion to MGF, we have
\begin{align*}
\mathrm{E} \exp (s X)&=\mathrm{E}\left(1+sX+\sum_{k=2}^{\infty} \frac{(s X)^{k}}{k !}\right)=1+\sum_{k=2}^{\infty} \frac{s^{k} \mathrm{E} X^{k}}{k !}\\
[\text{Applying}~\rm{(b)}]&  \le 1 + {\rm{2}}\sum\limits_{k = 2}^\infty  {{{(s\left\| X \right\|_{{\psi _1}})}^k}}=1+\frac{2(s\left\| X \right\|_{{\psi _1}})^{2}}{1-{s\left\| X \right\|_{{\psi _1}}}},~(|{s\left\| X \right\|_{{\psi _1}}}|<1)\\
&\le 1+{4(s\left\| X \right\|_{{\psi _1}})^{2}} \leq e^{({\rm{2}} \left\| X \right\|_{{\psi _1}})^{2}s^{2}},~\text{if}~|{s}|<1/(2\left\| X \right\|_{{\psi _1}}).
\end{align*}
Therefore, $X  \sim \operatorname{subE}(2\left\| X \right\|_{{\psi _1}})$. And to prove Corollary \ref{Sub-expConcentration}, we only need to note that if $\mathrm{E} \exp(|X|/\|X\|_{\psi_1} ) \le 2$, then $X  \sim \operatorname{subE}(2\left\| X \right\|_{{\psi _1}})$ by using Lemma \ref{prop:Psub-E}(c). The result follows Lemma \ref{sub-exponentialConcentration} (b).

\subsubsection{Proof of Lemma \ref{sub-GammaMAX}}

The Jensen's inequality and ${e}^{|x|} \leq {e}^{x}+{e}^{-x}$ imply that 
\begin{align}\label{eq:sE}
\exp (s\E \max _{i=1, \ldots, n} |X_i| ) &\leq \E \operatorname{exp}(\max _{i=1, \ldots, n} |X_i| )=\E \max _{i=1, \ldots, n} e^{s |X_{i}|}\le \E \max _{i=1, \ldots, n} (e^{-s X_{i}}+e^{s X_{i}})\nonumber\\
& \le \sum_{i=1}^{n} \E (e^{-s X_{i}}+e^{s X_{i}}) \le 2n \cdot \operatorname{exp}\left(\frac{s^{2}}{2} \frac{\upsilon}{1-c|s|}\right)
\end{align}
where the last inequality is deduced by $X_i\sim \Gamma(\upsilon_{i},c_{i})$ and $-X_i\sim \Gamma(\upsilon_{i},c_{i})$.

By \eqref{eq:sE} and taking logarithm, we have
\begin{align*}
\E \max _{i=1, \ldots, n} |X_i| \leq \mathop {\inf }\limits_{|s| < {c^{ - 1}}} \frac{{\log (2n) + \frac{s^{2}}{2} \frac{\upsilon}{1-c|s|}}}{s}&=\mathop {\inf }\limits_{|s| < {c^{ - 1}}} \left\{ {\frac{{\log (2n)}}{s}{\rm{(}}1 - c|s|{\rm{) + }}\frac{s}{2}\frac{\upsilon }{{1 - c|s|}}{\rm{ + }}c\log (2n)} \right\}\\
&=\sqrt {2\upsilon \log (2n)}  + c\log (2n).
\end{align*}
This completes the proof of Lemma \ref{sub-GammaMAX}.
    
\subsection{Proofs in Section 3}

For a subset of $C\subset \R^n$, denote $C^0$ and $\overline{C}$ as the interior and closure of $C$ respectively. For a vector $x=(x_1, \ldots, x_n)^\top\in \R^n$, denote 
$\|x\|_\infty = \max_{1\le i\le n} |x_i|$ as the $\ell_\infty$-norm of $x$. For a $n\times n$ matrix $J=(J_{ij})$, let $\|J\|_\infty$ be the matrix norm induced by the $\ell_\infty$-norm on vectors in $\R^n$, i.e.,
\begin{equation}\label{equ:fanshu}
\|J\|_\infty = \max_{x\neq 0} \frac{ \|Jx\|_\infty }{\|x\|_\infty}
=\max_{1\le i\le n}\sum_{j=1}^n |J_{ij}|.
\end{equation}

Let $D$ be an open convex subset of $\R^n$.
We say a $n\times n$ function matrix $G(\mathbf{x})$ whose elements $G_{ij}(\mathbf{x})$ are functions on vectors $\mathbf{x}$,
is Lipschitz continuous on $D$ if there exists a real number of $\lambda$ such that
for~any~$\mathbf{v}\in R^n$
and any $\mathbf{x}, \mathbf{y}\in D$,
\begin{equation}\label{equ:lipschitz-lambda}
\|G(\mathbf{x})(\mathbf{v})-G(\mathbf{y})(\mathbf{v})\|_\infty
\le \lambda\|\mathbf{x}-\mathbf{y}\|_\infty \|\mathbf{v}\|_\infty,
\end{equation}
where $\lambda$ may depend on $n$ but is independent of $\mathbf{x}$ and $\mathbf{y}$.
For every fixed $n$, $\lambda$ is a constant. Given $m, M>0$, we say an $n\times n$ matrix $V=(v_{ij})$ belongs to the matrix class $\mathcal{L}_{n}(m, M)$ if
$V$ is a diagonally balanced matrix with positive elements bounded by $m$ and $M$,
\begin{equation}\label{eq1}
    v_{ii}=\sum_{j\neq i}^{n} v_{ij}, ~~i=1,\ldots, n, \quad m\le v_{ij} \le M, \quad i,j=1,\ldots,n, \quad i\neq j.
\end{equation}
 We use $V$ to denote the Jacobian matrix induced by the moment equations and show that it belongs to the matrix class
$\mathcal{L}_{n}(m, M)$. We require the inverse
of $V$, which doesn't have a closed form.
\cite{yanxu2013} proposed approximating the inverse $V^{-1}$ of $V$ by a matrix
$S=(s_{ij})$, where
\begin{equation}\label{definition:sij}
s_{ij}=\frac{\delta_{ij}}{v_{ii}}.
\end{equation}
in which $\delta_{ij}=1$ when $i=j$ and $\delta_{ij}=0$ when $i\neq j$.

\subsubsection{Preliminaries}
Before the beginning of the proof, we introduce the preliminary results that will be used in the proofs.
For a subset $C\subset \R^n$, let $C^0$ and $\overline{C}$ denote the interior and closure of $C$ in $R^n$,
respectively. Denote $\Omega(\mathbf{x}, r)$ as the open ball $\{\mathbf{y}: \|\mathbf{x}-\mathbf{y}\|< r \}$ and $\overline{\Omega(\mathbf{x}, r)}$ as its closure.
We use Newton's iterative sequence to prove the existence and consistency of the moment estimates relying on results of
\cite{1974GRAGGt}.

\begin{lemma}\label{lemma:1}
Let $F(\mathbf{x}) = (F_1(\mathbf{x}), \ldots, F_n(\mathbf{x}))^\top$ be a function vector on $\mathbf{x}\in \R^n$.
Assume that the Jacobian matrix $F^{\prime}(\mathbf{x})$ is Lipschitz continuous on an open convex set $D$ with Lipschitz constant $\lambda$.
Given $\mathbf{x}_0 \in D$, assume that $[F'(\mathbf{x}_0)]^{-1}$ exists, we have
\[
\|[F'(\mathbf{x}_0)]^{-1} \|_\infty \le \aleph, ~~ \|[F'(\mathbf{x}_0)]^{-1}F(\mathbf{x}_0)\|_\infty \le \delta,~~
h = 2 \aleph \lambda \delta \le 1,
\]
\[
\Omega(\mathbf{x}_0, t^*) \subset D^0, ~~~t^* := \frac{2}{h}(1-\sqrt{1-h})\delta = \frac{2}{1+\sqrt{1-h}}\delta \le 2\delta,
\]
where $\aleph$ and $\delta$ are positive constants that may depend on $\mathbf{x}_0$ and the dimension $n$ of $\mathbf{x}_0$.
Then the Newton iterates $\mathbf{x}_{k+1}=\mathbf{x}_k - [F'(\mathbf{x}_k)]^{-1}F(\mathbf{x}_k)$
exist and $\mathbf{x}_k \in \Omega(\mathbf{x}_0, t^*)\subset D^0$ for all $k\ge 0$;
$\mathbf{\widehat{x}} =\lim \mathbf{x}_k$ exists, $\mathbf{\widehat{x}} \in \overline{\Omega(\mathbf{x}_0, t^*)} \subset D$
and $F(\mathbf{\widehat{ x}} )=0$. Thus if $t^*\to 0$, then $\|\widehat{\bs{x}}- \bs{x}_0\|=o(1)$.
\end{lemma}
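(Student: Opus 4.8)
The plan is to establish this via the classical Kantorovich majorant technique, reducing the $n$-dimensional Newton iteration to a scalar one governed by a quadratic whose smaller root is exactly $t^*$. First I would introduce the scalar majorizing quadratic
\[
\phi(t) = \tfrac{\lambda}{2}t^2 - \tfrac{1}{\aleph}t + \tfrac{\delta}{\aleph},
\]
whose roots are $t_\pm = (1\pm\sqrt{1-h})/(\aleph\lambda)$; using $h = 2\aleph\lambda\delta$ one checks directly that the smaller root $t_-$ coincides with $t^*$. I would then run scalar Newton's method on $\phi$ from $t_0 = 0$ via $t_{k+1}=t_k-\phi(t_k)/\phi'(t_k)$, and verify that $\{t_k\}$ is monotone increasing and converges upward to $t^*$, with the telescoping increments $t_{k+1}-t_k\ge 0$ summing to $t^*$. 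Here the condition $h\le 1$ is exactly what guarantees $t^*$ is real and the scalar iteration is well-behaved on $[0,t^*)$.

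The core of the argument is an induction showing that the vector iterates are well-defined and are \emph{majorized} by the scalar ones, i.e. $\|\mathbf{x}_k-\mathbf{x}_0\|_\infty\le t_k$, $\|\mathbf{x}_{k+1}-\mathbf{x}_k\|_\infty\le t_{k+1}-t_k$, and $[F'(\mathbf{x}_k)]^{-1}$ exists at every step. The inductive step rests on two estimates. The first is a Banach perturbation bound: since $F'$ is Lipschitz with constant $\lambda$ and $\|\mathbf{x}_k-\mathbf{x}_0\|_\infty\le t_k<t^*$, writing $F'(\mathbf{x}_k)=F'(\mathbf{x}_0)[I-[F'(\mathbf{x}_0)]^{-1}(F'(\mathbf{x}_0)-F'(\mathbf{x}_k))]$ and summing the Neumann series in the submultiplicative $\ell_\infty$ operator norm of \eqref{equ:fanshu} yields invertibility together with
\[
\|[F'(\mathbf{x}_k)]^{-1}\|_\infty \le \frac{\aleph}{1-\aleph\lambda t_k} = \frac{1}{-\phi'(t_k)}.
\]
The second is a Taylor residual bound: because $\mathbf{x}_{k+1}$ is the Newton step, $F(\mathbf{x}_k)+F'(\mathbf{x}_k)(\mathbf{x}_{k+1}-\mathbf{x}_k)=0$, so the integral form of Taylor's theorem and the Lipschitz property \eqref{equ:lipschitz-lambda} give
\[
\|F(\mathbf{x}_{k+1})\|_\infty \le \tfrac{\lambda}{2}\|\mathbf{x}_{k+1}-\mathbf{x}_k\|_\infty^2 \le \tfrac{\lambda}{2}(t_{k+1}-t_k)^2 = \phi(t_{k+1}),
\]
the last equality being the exact Taylor identity for the quadratic $\phi$ at the Newton point. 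Combining the two gives $\|\mathbf{x}_{k+2}-\mathbf{x}_{k+1}\|_\infty\le \phi(t_{k+1})/(-\phi'(t_{k+1})) = t_{k+2}-t_{k+1}$, which closes the induction and keeps every iterate inside $\Omega(\mathbf{x}_0,t^*)\subset D^0$.

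To finish, I would use that monotone convergence makes $\{t_k\}$ Cauchy, and the majorization $\|\mathbf{x}_{k+1}-\mathbf{x}_k\|_\infty\le t_{k+1}-t_k$ transfers this to $\{\mathbf{x}_k\}$; its limit $\widehat{\mathbf{x}}$ therefore lies in $\overline{\Omega(\mathbf{x}_0,t^*)}\subset D$. Continuity of $F$ together with $\|F(\mathbf{x}_k)\|_\infty\to 0$ then forces $F(\widehat{\mathbf{x}})=0$, and the concluding statement is immediate: $\|\widehat{\mathbf{x}}-\mathbf{x}_0\|_\infty\le t^*$, so $t^*\to 0$ gives $\|\widehat{\mathbf{x}}-\mathbf{x}_0\|=o(1)$.

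I expect the main obstacle to be the inductive step, and specifically the uniform control of the inverse Jacobian along the entire orbit: one must guarantee that each $\mathbf{x}_k$ stays strictly inside the region where $1-\aleph\lambda\|\mathbf{x}_k-\mathbf{x}_0\|_\infty>0$, so that the Neumann series converges, and then verify that the perturbation and Taylor estimates chain together to reproduce \emph{exactly} the scalar recursion rather than a weaker bound. The degenerate boundary case $h=1$, where $t^*=2\delta=1/(\aleph\lambda)$ and the quadratic convergence degrades to linear, requires slightly more care since $\aleph\lambda t_k<1$ holds only for each finite $k$ and not in the limit; this is handled by taking the final conclusion $F(\widehat{\mathbf{x}})=0$ from continuity rather than from invertibility at the limit point.
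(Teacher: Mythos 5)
Your proposal is correct in substance: it is the classical Kantorovich majorant proof of the Newton--Kantorovich theorem, and the key identifications check out ($t_-=t^*$ for your quadratic $\phi$, $1/(-\phi'(t_k))=\aleph/(1-\aleph\lambda t_k)$ matching the Banach perturbation bound, and $\phi(t_{k+1})=\tfrac{\lambda}{2}(t_{k+1}-t_k)^2$ closing the recursion). The comparison with the paper is lopsided, however: the paper does not prove this lemma at all. It is stated as a known result and attributed to Gragg and Tapia (1974), so the "paper's proof" is a citation. What your route buys is a self-contained argument; what the citation buys is brevity and the sharper error bounds of the Gragg--Tapia formulation (which the paper does not actually use beyond $\|\widehat{\mathbf{x}}-\mathbf{x}_0\|\le t^*\le 2\delta$). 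Two small points to tighten if you write this out in full: (i) the induction hypothesis must explicitly carry $\|\mathbf{x}_k-\mathbf{x}_0\|_\infty\le t_k<t^*$ so that both the Neumann series condition $\aleph\lambda t_k<1$ and the inclusion $\mathbf{x}_k\in\Omega(\mathbf{x}_0,t^*)\subset D^0$ (needed for the Lipschitz estimate on the segment $[\mathbf{x}_k,\mathbf{x}_{k+1}]$, which requires convexity of $D$) are available at every step; (ii) your handling of $h=1$ by deducing $F(\widehat{\mathbf{x}})=0$ from continuity of $F$ and $\|F(\mathbf{x}_k)\|_\infty\le\phi(t_k)\to\phi(t^*)=0$, rather than from invertibility at the limit, is exactly right and worth keeping.
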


\begin{lemma}\label{lemma:2}
For a matrix $A=(a_{ij})$, define $\|A\|:=\max_{i,j} |a_{ij}|$.
If $V\in\mathcal{L}_{n}(m, M)$ at \eqref{eq1} and $n$ is large enough, we get
$$\| V^{-1}-S \| \le \frac{c_1M^2}{m^3(n-1)^2},$$
where $S$ is defined at \eqref{definition:sij} and $c_1$ is a constant that does not depend on $M$, $m$, and $n$.
\end{lemma}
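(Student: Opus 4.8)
The plan is to read the statement as a quantitative inverse-approximation result for the signless-Laplacian-type matrices in $\mathcal{L}_n(m,M)$, and to attack it in three stages: coarse bounds plus invertibility, extraction of an explicit leading correction to $S$, and control of a residual, which is where the real work lies. First I would record the elementary consequences of $V\in\mathcal{L}_n(m,M)$. Since $v_{ii}=\sum_{j\ne i}v_{ij}$ with $m\le v_{ij}\le M$, each diagonal entry obeys $(n-1)m\le v_{ii}\le(n-1)M$, so $\|S\|=\max_i v_{ii}^{-1}\le\{(n-1)m\}^{-1}$. To control $V^{-1}$ I would use the signless-Laplacian identity
\[
\mathbf{x}^\top V\mathbf{x}=\sum_{1\le i<j\le n}v_{ij}(x_i+x_j)^2\ge m\sum_{i<j}(x_i+x_j)^2=m\big[(n-2)\|\mathbf{x}\|_2^2+(\mathbf{1}^\top\mathbf{x})^2\big],
\]
which shows $V$ is positive definite with $\lambda_{\min}(V)\ge m(n-2)$; hence $V$ is invertible, $\|V^{-1}\|_{\mathrm{op}}\le\{m(n-2)\}^{-1}$, and every entry of $W:=V^{-1}$ satisfies $|W_{ij}|\le\{m(n-2)\}^{-1}$.

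Second, I would split $V=\Delta+G$ with $\Delta=\operatorname{diag}(v_{11},\dots,v_{nn})$ (so $S=\Delta^{-1}$) and $G$ the off-diagonal part, $G_{ij}=v_{ij}\mathbf{1}\{i\ne j\}$. From $W=\Delta^{-1}-\Delta^{-1}GW$ one gets the exact identity $W-S=-SGW$, and substituting once more yields the two-term expansion
\[
W-S=-SGS+(SG)^2W .
\]
The first term is explicit: $(SGS)_{ij}=v_{ij}/(v_{ii}v_{jj})$ for $i\ne j$ and $0$ on the diagonal, so $\|SGS\|\le M/\{(n-1)m\}^2\le M^2/\{m^3(n-1)^2\}$ (using $M\ge m$), already of the claimed order. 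It therefore remains to bound the residual $(SG)^2W$ entrywise by $O\!\big(M^2/\{m^3(n-1)^2\}\big)$.

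Third — and this is the crux — I must bound $(SG)^2W$ per entry. This is delicate because $SG=\Delta^{-1}G$ is the weighted random-walk matrix, whose Perron eigenvalue is exactly $1$ with eigenvector $\mathbf{1}$ (from $G\mathbf{1}=\Delta\mathbf{1}$, equivalently $V\mathbf{1}=2\Delta\mathbf{1}$). Consequently no Neumann series for $(I+\Delta^{-1}G)^{-1}$ converges, and a naive entrywise bootstrap of $v_{ii}W_{ij}=-\sum_{l\ne i}v_{il}W_{lj}$ through absolute values produces a self-map with contraction constant $\ge M/m\ge 1$; Cauchy–Schwarz only reaches the suboptimal rate $O(n^{-3/2})$. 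The $(n-1)^{-2}$ rate must come from cancellation among the $W_{lj}$. I would isolate the all-ones direction: the identity above shows $V$ is uniformly well conditioned on $\mathbf{1}^{\perp}$, with eigenvalues in $[m(n-2),M(n-2)]$ and condition number $\le M/m$, while the exact relation $V^{-1}(v_{11},\dots,v_{nn})^{\top}=\tfrac12\mathbf{1}$ pins down the $\mathbf{1}$-component of each column $W_{\cdot j}$. These two facts localize the problem and dispose of the dangerous direction, but turning the resulting $\ell_2$ control on $\mathbf{1}^{\perp}$ into a per-entry $O(n^{-2})$ bound is exactly the delicate inverse-approximation estimate of Simons–Yao type underlying \cite{yanxu2013}. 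The hard part will be making this cancellation quantitative; it is what fixes the constant $c_1$, whereas everything else is bookkeeping with $v_{ii}\asymp(n-1)m$ and $\|V^{-1}\|_{\mathrm{op}}\le\{m(n-2)\}^{-1}$.
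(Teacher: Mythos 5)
The paper never proves this lemma: it is stated as a preliminary and imported from \cite{yanxu2013} (whose argument in turn refines Simons and Yao's inverse-approximation lemma for the Bradley--Terry model), so there is no in-paper proof to measure you against. Judged on its own, your proposal is a correct setup followed by an acknowledged hole where the theorem lives. The preliminaries are right: the identity $\mathbf{x}^{\top}V\mathbf{x}=\sum_{i<j}v_{ij}(x_i+x_j)^{2}$, the bound $\lambda_{\min}(V)\ge m(n-2)$ and hence $|(V^{-1})_{ij}|\le\{m(n-2)\}^{-1}$, the exact expansion $V^{-1}-S=-SGS+(SG)^{2}V^{-1}$, and the entrywise estimate $\|SGS\|\le M/\{m^{2}(n-1)^{2}\}$ all check out. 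But the entire content of the lemma is the entrywise $O\big(M^{2}/\{m^{3}(n-1)^{2}\}\big)$ bound on the residual $(SG)^{2}V^{-1}$, and you do not supply it. What your tools actually deliver is $O(n^{-1})$ from row-stochasticity of $SG$, or $O(n^{-3/2})$ from Cauchy--Schwarz, and you correctly diagnose why nothing better is available along this road: $SG$ has Perron eigenvalue $1$, so no Neumann series or contraction argument applies, and each further substitution $W=S-SGW$ produces another $O(n^{-2})$ explicit term plus a remainder that never becomes small. Noting that $V$ is well conditioned on $\mathbf{1}^{\perp}$ and that $V^{-1}\Delta\mathbf{1}=\tfrac12\mathbf{1}$ does isolate the dangerous direction, but converting $\ell_{2}$ control on $\mathbf{1}^{\perp}$ into a \emph{per-entry} $O(n^{-2})$ bound is precisely the theorem; deferring it to ``the delicate inverse-approximation estimate of Simons--Yao type'' defers the whole proof. (A small additional imprecision: $\mathbf{1}^{\perp}$ is not in general an invariant subspace of $V$, so speaking of eigenvalues of $V$ ``on $\mathbf{1}^{\perp}$'' is loose, though harmless for what you use it for.)

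For completeness: the known proofs of this estimate do not run through the expansion you set up. Simons and Yao, and \cite{yanxu2013} after them, argue directly on the linear systems $V\mathbf{x}=\mathbf{e}_{j}$, exploiting the balance identity $v_{ii}=\sum_{j\neq i}v_{ij}$ to derive explicit two-sided pointwise bounds on the diagonal and off-diagonal entries of $V^{-1}$ separately; the cancellation you identify as essential is built into those elementary (if intricate) manipulations rather than extracted from spectral data. To finish your route you would need a genuinely new argument giving $\max_{i,j}|((SG)^{2}V^{-1})_{ij}|\le cM^{2}/\{m^{3}(n-1)^{2}\}$, and nothing in the proposal indicates how to obtain it; as written, the proof is incomplete.
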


\begin{lemma}\label{lemma:inverse:bound}
If $V\in\mathcal{L}_{n}(m, M)$, for $n$ which is large enough,
\[
\| V^{-1} \|_\infty \le \| V^{-1}-S \|_\infty + \|S\|_\infty \le \frac{c_1nM^2}{m^3(n-1)^2}+ \frac{1}{m}(\frac{1}{n(n-1)}+\frac{1}{n-1})
\le \frac{ c_2M^2}{ nm},
\]
where $c_2$ is a constant that does not depend on $M$, $m$, and $n$.
\end{lemma}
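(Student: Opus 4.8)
The statement is essentially a bookkeeping consequence of the entrywise estimate already recorded in Lemma~\ref{lemma:2}, so the plan is to pass from the max-entry norm $\|\cdot\|$ to the induced $\ell_\infty$ operator norm $\|\cdot\|_\infty$ and then assemble the two pieces. First I would invoke subadditivity of the operator norm to write $\|V^{-1}\|_\infty \le \|V^{-1}-S\|_\infty + \|S\|_\infty$, which is the first inequality in the displayed chain. The whole task then reduces to bounding the two summands separately, recalling from \eqref{equ:fanshu} that $\|J\|_\infty$ is the maximum absolute row sum of $J$.

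For the first summand, the key observation is that each row of the $n\times n$ matrix $V^{-1}-S$ contains exactly $n$ entries, so its absolute row sum is at most $n$ times the largest entry in absolute value; that is, $\|V^{-1}-S\|_\infty \le n\,\|V^{-1}-S\|$. Plugging in the entrywise bound $\|V^{-1}-S\| \le c_1 M^2/(m^3(n-1)^2)$ from Lemma~\ref{lemma:2} yields $\|V^{-1}-S\|_\infty \le c_1 n M^2/(m^3(n-1)^2)$, which is precisely the first bracketed term. The only point to watch is the factor-of-$n$ loss incurred when converting a max-entry bound into an operator-norm bound; this is harmless because Lemma~\ref{lemma:2} already supplies an $O(n^{-2})$ entrywise rate, so the product is still $O(n^{-1})$.

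For the second summand I would use that $S$ defined at \eqref{definition:sij} is diagonal with $s_{ii}=1/v_{ii}$, so its $i$-th row sum is simply $1/v_{ii}$ and hence $\|S\|_\infty = \max_i 1/v_{ii}$. Since $V\in\mathcal{L}_n(m,M)$ forces $v_{ii}=\sum_{j\neq i} v_{ij}\ge (n-1)m$ by \eqref{eq1}, I obtain $\|S\|_\infty \le 1/((n-1)m)$, which is dominated by the stated expression $\frac{1}{m}\big(\frac{1}{n(n-1)}+\frac{1}{n-1}\big)$ (the extra positive term $1/(mn(n-1))$ only weakens the bound). Combining the two estimates and simplifying for $n$ large---using $n/(n-1)^2 = O(1/n)$, $1/(n-1)=O(1/n)$, and $m\le M$ (immediate from \eqref{eq1}) to absorb the remaining constants---collapses the sum to a single term of order $M^2/(nm)$, giving $\|V^{-1}\|_\infty \le c_2 M^2/(nm)$ with $c_2$ independent of $m$, $M$, and $n$.

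I do not anticipate a genuine obstacle: the entire argument is an assembly of the already-proven Lemma~\ref{lemma:2} with elementary norm inequalities. If anything, the one step deserving care is the transition from the entrywise bound to the $\ell_\infty$ operator norm, since a crude bound loses a factor of $n$; the argument succeeds precisely because the entrywise estimate decays like $n^{-2}$ rather than $n^{-1}$, leaving exactly enough room to accommodate this loss while still matching the target $O(n^{-1})$ rate.
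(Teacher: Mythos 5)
Your decomposition is exactly the one the paper intends: triangle inequality, then $\|V^{-1}-S\|_\infty \le n\|V^{-1}-S\|$ combined with Lemma~\ref{lemma:2}, then the row-sum of the diagonal matrix $S$ from \eqref{definition:sij} bounded via $v_{ii}\ge (n-1)m$. Those two summands are handled correctly and match the middle expression in the displayed chain (your bound $1/((n-1)m)$ for $\|S\|_\infty$ is even slightly sharper than the paper's).

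The gap is in your final absorption step. You claim that $n/(n-1)^2=O(1/n)$, $1/(n-1)=O(1/n)$ and $m\le M$ suffice to collapse $\frac{c_1nM^2}{m^3(n-1)^2}+\frac{C}{nm}$ into $\frac{c_2M^2}{nm}$ with $c_2$ independent of $m$, $M$, $n$. They do not. The first summand is of order $\frac{M^2}{nm^3}$, and $\frac{M^2}{nm^3}\le \frac{c_2M^2}{nm}$ forces $m^2\ge 1/c_2$, i.e.\ $m$ bounded away from zero; the second summand satisfies $\frac{C}{nm}\le \frac{c_2M^2}{nm}$ only if $M$ is bounded below. Neither follows from $m\le M$, and neither holds in the paper's application, where $m=e^{-Q_n-C_0e^{Q_n}}(1-C_1e^{Q_n})\to 0$ as $Q_n\to\infty$. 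The inequality that actually goes through is $\frac{1}{nm}=\frac{m^2}{nm^3}\le\frac{M^2}{nm^3}$ (this is the correct use of $m\le M$), yielding a final bound of order $\frac{c_2M^2}{nm^3}$ rather than $\frac{c_2M^2}{nm}$. That is also the bound the paper itself uses downstream in the proof of Theorem~\ref{theorem:1} (the term $\frac{c_2M^2}{nm^3}$ appears there), which strongly suggests the exponent in the lemma's final expression is a typo. So your argument is the right argument for the corrected statement, but the justification you give for the last inequality as literally stated would fail.
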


\subsubsection{Proof of the Theorem \ref{theorem:1}}
Then the Jacobian matrix $F^{'}({\boldsymbol{\alpha}})$ of $F({\boldsymbol{\alpha}})$ can
be calculated as follows. For $i,j=1,\dots,n,$
\begin{gather*}
    \frac{\partial{F_i}}{\partial{{\alpha_{i}}}}=-\sum_{j \neq i}^{n}e^{\alpha_{i} + \alpha_{j} + \varepsilon_{i,j}(\alpha_{i} + \alpha_{j})}\left(1+\frac{\partial \varepsilon_{i,j}(\alpha_{i} + \alpha_{j}) }{\partial \alpha_{i} }\right), \quad i=1,\dots,n, \\
    \frac{\partial{F_i}}{\partial{ {\alpha_{j}}}} = -e^{\alpha_{i} + \alpha_{j} + \varepsilon_{i,j}(\alpha_{i} + \alpha_{j})}\left(1+\frac{\partial \varepsilon_{i,j}(\alpha_{i} + \alpha_{j}) }{\partial \alpha_{j} }\right), \quad j=1,\dots,n, \quad j\neq i.
\end{gather*}
Following Assumption \ref{assumption:2}, $ -C_{0}e^{Q_{n}}\leq \varepsilon_{i,j} \leq C_{0}e^{Q_{n}}$ and $- C_{1}e^{Q_{n}}\leq \partial \varepsilon_{i,j} / \partial \alpha_{j} \leq C_{1}e^{Q_{n}}$, where $C_{0}$ and $C_{0}$ are positive constants, we have
\begin{equation*}
    -e^{Q_{n}+C_{0}e^{Q_{n}}}(1+C_{1}e^{Q_{n}})  \leq \frac{\partial{F_i}}{\partial{ {\alpha_{j}}}} \leq -e^{-Q_{n}-C_{0}e^{Q_{n}}}(1-C_{1}e^{Q_{n}}).
\end{equation*}
So for any $i\neq j$, we have the following inequality:
\begin{equation*}
    m\leq \frac{\partial{F_i}}{\partial{ {\alpha_{j}}}} \leq M.
\end{equation*}
It is not difficult to verify that $-F^{'}_{i,j}(\alpha^{*}) \in \mathcal{L}_{n}(m,M)$
 where $m=e^{-Q_{n}-C_{0}e^{Q_{n}}}(1-C_{1}e^{Q_{n}})$, $M=e^{Q_{n}+C_{0}e^{Q_{n}}}(1+C_{1}e^{Q_{n}}) $.
\vspace{2ex}

The following lemma assures that the condition holds with a large probability.
\begin{lemma}\label{lemma:4}
With probability approaching one, the following holds:
\begin{equation*}
    \max_{i=1,\dots,n} \big|\tilde{d}_{i} - \E (d_{i}) \big|= O\big( \sqrt{(n-1)\log(n-1)} + \sqrt{2\upsilon \log (2n)} + c  \log (2n) \big).
\end{equation*}
\end{lemma}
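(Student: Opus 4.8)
The plan is to bound the maximum deviation $\max_i |\tilde d_i - \E(d_i)|$ by splitting it through the triangle inequality into two pieces corresponding to the two sources of randomness in $\tilde d_i = d_i + e_i$. Recalling that $\E(d_i)$ is the expectation of the true degree (not the noisy one), I would write
\begin{equation*}
\big|\tilde d_i - \E(d_i)\big| = \big|(d_i - \E d_i) + e_i\big| \le \big|d_i - \E d_i\big| + |e_i|,
\end{equation*}
so that
\begin{equation*}
\max_{i=1,\dots,n}\big|\tilde d_i - \E(d_i)\big| \le \max_{i=1,\dots,n}\big|d_i - \E d_i\big| + \max_{i=1,\dots,n}|e_i|.
\end{equation*}
The first term is the fluctuation of the observed degree around its mean, and the second is the maximum of the injected sub-Gamma noise. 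I would handle the two maxima separately and then combine the resulting rates.

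For the noise term, I would apply Lemma \ref{sub-GammaMAX} directly: since $\{e_i\}_{i=1}^n$ are independent $\mathrm{sub}\Gamma(\upsilon_i,c_i)$ with $\max_i\upsilon_i=\upsilon$ and $\max_i c_i=c$, that lemma gives $\E(\max_i|e_i|) \le \sqrt{2\upsilon\log(2n)} + c\log(2n)$, which upgrades to an $O_p$ (or in-expectation) statement of the same order and supplies exactly the $\sqrt{2\upsilon\log(2n)} + c\log(2n)$ contribution appearing in the claim. For the degree term, I would fix a node $i$ and note that $d_i - \E d_i = \sum_{j\neq i}(a_{ij} - \E a_{ij})$ is a sum of $n-1$ independent centered bounded Bernoulli variables, hence each is sub-Gaussian (equivalently sub-Gamma) with a bounded variance proxy. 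A Bernstein- or Hoeffding-type tail bound (Lemma \ref{lm-Bernsteingm}, or the sub-Gamma concentration of Lemma \ref{sub-GammaConcentration}) gives, for a suitable constant $K$,
\begin{equation*}
\pr\big(|d_i - \E d_i| > t\big) \le 2\exp\!\big(-t^2/(K(n-1))\big),
\end{equation*}
since the summands are bounded by $1$ so the variance term scales like $n-1$.

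Then I would take a union bound over the $n$ nodes: choosing $t \asymp \sqrt{(n-1)\log(n-1)}$ makes $n\cdot 2\exp(-t^2/(K(n-1)))\to 0$, which yields $\max_i|d_i-\E d_i| = O_p\big(\sqrt{(n-1)\log(n-1)}\big)$. Adding the two bounds gives the stated rate. The main obstacle, and the only genuinely delicate point, is the union-bound calibration for the degree term: one must verify that the variance proxy of $d_i-\E d_i$ really grows like $n-1$ (so that the exponent $t^2/(n-1)$ is the correct scale) and that the logarithmic factor from the union over $n$ nodes is absorbed by the $\log(n-1)$ inside $t$. Everything else is a routine assembly of results already established in the excerpt; I would also keep track of whether the cross term interacts with the noise (it does not, since $e_i$ is independent of the $a_{ij}$), so the two maxima are controlled independently and simply summed.
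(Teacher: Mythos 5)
Your proof is correct and follows essentially the same route as the paper: the identical triangle-inequality split $\max_i|\tilde d_i-\E d_i|\le \max_i|d_i-\E d_i|+\max_i|e_i|$, with Lemma \ref{sub-GammaMAX} (plus Markov) handling the noise maximum. The only difference is that the paper simply cites an external result (condition (C5) of an earlier reference) for $\max_i|d_i-\E d_i|=O_p\bigl(\sqrt{(n-1)\log(n-1)}\bigr)$, whereas you supply the Hoeffding-plus-union-bound derivation of that bound yourself; your calibration ($t\asymp\sqrt{(n-1)\log(n-1)}$ against a variance proxy of order $n-1$) is the right one, so this just makes the step self-contained.
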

\begin{proof}
Note that $\{e_i\}_{i=1}^n$ are mutually independent and distributed in
{\em sub-gamma} distributions $\Gamma(\upsilon_{i},c_{i})(i=1,...,n)$ with respective parameters $(\upsilon_{i},c_{i})(i=1,...,n)$. Let $\max_{i=1,...,n}\upsilon_{i} = \upsilon$ and $\max_{i=1,...,n}c_{i} = c$. By Lemma \ref{sub-GammaMAX}, for each $i=1,...,n$, we have 
\begin{equation}\label{eq:sum:error}
\E (\max_{i=1,\ldots,n}|e_{i}|)  \leq \sqrt{2\upsilon \log (2n)} + c  \log (2n)
\end{equation}
%From Theorem 2.3 in \cite{boucheron2013concentration},  we have following inequalities:
%\begin{equation}\label{eq:sum:error}
%\P (|\max_{i=1,\ldots,n}e_i | \geq \sqrt{2\upsilon \log n} + c  \log n) \leq \exp(-\log n)=\frac{1}{n}
%\end{equation}
%and
%\begin{equation}\label{eq:sum:error}
%\P (\Sigma_{i}^{n}e_i ) \ge 2\kappa \sqrt \frac{ n\log(n)}{\gamma} \le 2 \exp(\frac{-\gamma}{n} \times \frac{n\log(n)}{ \gamma}) \le \frac{2}{n}
%\end{equation}
%then, we have
%\begin{equation}\label{eq:con3}
%\max |{e}_i | \le \kappa \sqrt { \log(n-1)}.
%\end{equation}
%where $\gamma$ is an absolute constant appearing in the concentration inequality.
Following \cite{yan2016undirectedrandom} in (C5) that they show the following inequality holds with probability approaching one:
\begin{equation}\label{eq:con3}
\max_{i=1,...,n} |d_{i} - \E(d_{i})| \le O_p \big(\sqrt {(n-1) \log (n-1)} \big),
\end{equation}
we have
\begin{equation}\label{eq:sum:error}
\begin{split}
\max_{i=1, \ldots, n} |\tilde{d}_{i} - \E ({d}_{i}) | & \le \max_i|d_{i} - \E (d_{i}) | + \max_i |e_{i}| \\
&= O_p \big( \sqrt{(n-1)\log(n-1)} +  \sqrt{2\upsilon \log (2n)} + c  \log (2n) \big),
\end{split}
\end{equation}
and it is what we need to prove.
\end{proof}

Now, we present the proof of Theorem \ref{theorem:1}.
\begin{proof}
Let
\begin{equation*}
    g_{ij}(\alpha) = \left(\frac{\partial^{2} F_{i}}{\partial \alpha_{1}\partial \alpha_{j}},....,\frac{\partial^{2} F_{i}}{\partial \alpha_{n}\partial \alpha_{j}} \right)^{\top}.
\end{equation*}
It is easy to verify that
\begin{equation*}
    \frac{\partial^{2} F_i}{\partial{\alpha_{i}}^{2}}=\sum_{j \neq i}^{n}e^{\alpha_{i} + \alpha_{j} + \varepsilon_{i,j}(\alpha_{i},\alpha_{j})}\left[\Big(1+\frac{\partial \varepsilon_{i,j}(\alpha_{i},\alpha_{j}) }{\partial \alpha_{i} }\Big)^{2} + \frac{\partial^{2} \varepsilon_{i,j}(\alpha_{i},\alpha_{j}) }{\partial {\alpha_{i}}^{2} }\right],\quad i=1,\dots,n,
\end{equation*}
and
\begin{equation*}
    \begin{aligned}
        & \frac{\partial^{2} F_i}{\partial \alpha_{j} \partial \alpha_{i}} \\
        & =e^{\alpha_{i} + \alpha_{j} + \varepsilon_{i,j}(\alpha_{i} ,\alpha_{j})}\left[\Big(1+\frac{\partial \varepsilon_{i,j}(\alpha_{i} ,\alpha_{j}) }{\partial \alpha_{j} }\Big) \Big(1+\frac{\partial \varepsilon_{i,j}(\alpha_{i},\alpha_{j}) }{\partial \alpha_{i} }\Big)+ \frac{\partial^{2} \varepsilon_{i,j}(\alpha_{i}, \alpha_{j}) }{\partial \alpha_{j} \partial \alpha_{i} }\right], \quad j=1,\dots,n, \quad j\neq i.
    \end{aligned}
\end{equation*}
Following Assumption \ref{assumption:2} that $ -C_{2}e^{Q_{n}}\leq \frac{\partial^{2} \varepsilon_{i,j}(\alpha_{i},\alpha_{j}) }{\partial \alpha_{j} \partial \alpha_{i} } \leq C_{2}e^{Q_{n}}$ where $C_{2}$ is a positive constant, we have
\begin{equation}\label{eq:app1}
\begin{split}
     \left|\frac{\partial^{2} F_i}{\partial \alpha_{j} \partial \alpha_{i}} \right| \leq e^{Q_{n}+C_{0}e^{Q_{n}}} \big(1+2C_{1}e^{Q_{n}}+C_{1}^{2}e^{2Q_{n}}+C_{2}e^{Q_{n}} \big).
\end{split}
\end{equation}
Let $M_{1} =e^{Q_{n}+C_{0}e^{Q_{n}}}(1+2C_{1}e^{Q_{n}}+C_{1}^{2}e^{2Q_{n}}+C_{2}e^{Q_{n}}) $. This leads to $\|g_{ii}(\alpha)\|_{1} \leq 2(n-1)M_{1}$, where $\|x\|_{1} = \sum_{i}|x_{i}|$ for a general vector $x$. On the other hand, we note that when $i \neq j$ and $k \neq i,j$, there exists
\begin{equation*}
    \frac{\partial^{2} F_i}{\partial \alpha_{k} \partial \alpha_{j}}=0
\end{equation*}
which leads to that $\|g_{ij}(\alpha)\|_{1} \leq 2M_{1}$ when $i \neq j$. Consequently, for any vector $v$,
\begin{equation*}
\begin{split}
\max_{i} \sum_{j}\left[\frac{\partial F_{i}}{\partial \alpha_{j}}(x)- \frac{\partial F_{i}}{\partial \alpha_{j}}(y) \right]v_{j} & \leq \|v\|_{\infty} \max_{i}\sum_{j} \left|\frac{\partial F_{i}}{\partial \alpha_{j}}(x)- \frac{\partial F_{i}}{\partial \alpha_{j}}(y) \right| \\
& = \|v\|_{\infty} \max_{i} \sum_{j} \left|\int_{0}^{1}g_{i}\big(tx+(1-t)y \big) \, dt \right|\\
& = \|v\|_{\infty} \|x-y\|_{\infty}\max_{i}\sum_{j}\max_{\alpha \in D }\|g_{ij}(\alpha)\|_{1}   \\
&\leq 4M_{1}(n-1) \|v\|_{\infty} \|x-y\|_{\infty}
\end{split}
\end{equation*}
It shows that $F^{'}(x)$ is Lipschitz continuous with the lipschitz coefficient $\lambda = 4M_{1}(n-1)$. For any $\alpha \in D$, we can define the Newton's iterative sequence with the starting point $\alpha^{(0)} :=  \alpha$, i.e.,
\begin{equation*}
    \alpha^{(k+1)} = \alpha^{(k)} - [F'(\alpha^{(k)})]^{-1}F(\alpha^{(k)}), \qquad k = 0,1,...
\end{equation*}
which shows that $F'(\alpha) \in \mathcal{L}_{n}(m, M)$ with
\begin{equation*}
    m=e^{-Q_{n}-C_{0}e^{-Q_{n}}}(1-C_{1}e^{-Q_{n}}), \qquad M=e^{Q_{n}+C_{0}e^{Q_{n}}}(1+C_{1}e^{Q_{n}}) .
\end{equation*}
By lemma \ref{lemma:2} and $M^{2}/m^{3} = o(n)$, we have 
\begin{eqnarray*}
\big\| [F'(\bs{\alpha})]^{-1}F(\bs{\alpha}) \big\|_\infty
& \le & \big\| [F'(\bs{\alpha})]^{-1} \big\|_\infty \big\| F(\bs{\alpha}) \big\|_\infty  \\
& \le & \left[\frac{c_1nM^2}{m^{3}(n-1)^2}+ \frac{1}{m(n-1)} \right]\| F(\bs{\alpha})\|_\infty \\
& \le &  O_{p} \left(\frac{ c_2M^2}{nm^{3}} \big(\sqrt{(n-1)\log(n-1)}+\sqrt{2\upsilon \log (2n)} + c  \log (2n) \big) \right)\\
& \le &  O_{p} \left(\frac{1}{n}e^{14Q_{n}+e^{Q_{n}}} \big(\sqrt{(n-1)\log(n-1)}+\sqrt{2\upsilon \log (2n)} + c  \log (2n) \big)\right).
\end{eqnarray*}
Hence
\begin{equation*}
    \aleph = O\left(\frac{1}{n}e^{4Q_{n}+e^{Q_{n}}}\right), \quad \delta = O\left(\frac{1}{n}e^{14Q_{n}+e^{Q_{n}}} \big(\sqrt{(n-1)\log(n-1)}+\sqrt{2\upsilon \log (2n)} + c\log (2n) \big)\right),
\end{equation*}
and thus
\begin{equation*}
    \begin{aligned}
        h = 2 \aleph \lambda \delta = & O\left(\frac{1}{n}e^{5Q_{n}+e^{Q_{n}}}\right)\times (n-1)O(e^{4Q_{n}+e^{Q_{n}}}) \\
        &\times O\left(\frac{1}{n}e^{14Q_{n}+e^{Q_{n}}} \big(\sqrt{(n-1)\log(n-1)}+\sqrt{2\upsilon \log (2n)} + c  \log (2n) \big) \right).
    \end{aligned}
\end{equation*}
If $e^{Q_{n}+e^{Q_{n}}} = \left(\frac{(\sqrt{(n-1)\log(n-1)}+\sqrt{2\upsilon \log (2n)} + c  \log (2n))}{n}\right)^{-1/23}$ and $n \rightarrow \infty$, we have $h=o(1)$. This verifies the conditions in Lemma \ref{lemma:1}. Therefore, $\lim_{k\rightarrow \infty }\alpha^{(k)}$ exists, and it is exactly $\widehat{\alpha}$. By lemma \ref{lemma:1}, it satisfies
\begin{equation*}
    \| \widehat{\alpha} - \alpha^{*}\|_{\infty} \leq 2\delta = O_{p}\left(\frac{1}{n}e^{16Q_{n}+e^{Q_{n}}} \big(\sqrt{(n-1)\log(n-1)}+\sqrt{2\upsilon \log (2n)} + c  \log (2n) \big)\right).
\end{equation*}
This is the consistency we need to prove.
%The above calculation show that
%\[ m=C_{0}e^{-2Q_{n}}(1+C_{1}e^{-Q_{n}}), M=C_{0}e^{2Q_{n}}(1+C_{1}e^{Q_{n}}) , \phi_{1}=1\]
% Following \cite{yan2016undirectedrandom} and \cite{Pan2019} in (C5), we will derive $\phi_{2} =1$. That is, condition (C4) holds. Note that
%\[\frac{M^2 \phi_1}{nm^6}(\sqrt{n\log n}+\kappa \sqrt{\log n}) = O(e^{6Q_{n}}\sqrt\frac{\log n}{n})\]
%If $e^{Q_{n}} = o((n/ \log (n))^{1/12}$ , the equation \eqref{condition-con-stron} satisfies.  By Theorem \ref{theorem:1}, the uniform consistency of $\widehat{\mathbf{\alpha}}$ is stated as follows.
\end{proof}

\subsubsection{Proof of the Theorem 2}
\noindent To prove Theorem \ref{theorem:2}, we should introduce the following lemma and proposition.
\begin{lemma}\label{lemma:asy}
If $V \in \mathcal{L}_n(m, M)$, $W=V^{-1}-S$ and $U=\operatorname{cov}\big[W\{\mathbf{{d}}- \E(\mathbf{d})\}\big]$, then
\begin{equation*}
\|U\|\leq \|V^{-1} - S\| + \frac{2M}{m^{2}(n-1)^{2}}.
\end{equation*}
\end{lemma}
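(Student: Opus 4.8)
The plan is to collapse the covariance $U$ into an explicit polynomial in $V$ and $S$ and then bound that matrix entrywise in the max-entry norm $\|A\|:=\max_{i,j}|a_{ij}|$. First I would use that $W=V^{-1}-S$ is non-random, so the covariance passes through $W$:
\[
U=\operatorname{cov}\big[W\{\mathbf{d}-\E(\mathbf{d})\}\big]=W\,\operatorname{cov}(\mathbf{d})\,W^\top .
\]
Because $a_{ij}=a_{ji}$ and the edges are independent, the degree covariance is symmetric and diagonally balanced, namely $\operatorname{cov}(d_i,d_j)=\operatorname{var}(a_{ij})$ for $i\neq j$ and $\operatorname{var}(d_i)=\sum_{j\neq i}\operatorname{var}(a_{ij})$; this is precisely the diagonally balanced matrix $V\in\mathcal{L}_n(m,M)$ of the hypothesis. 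Hence $U=WVW^\top$, and since $V$, $V^{-1}$, $S=\operatorname{diag}(1/v_{11},\ldots,1/v_{nn})$ and therefore $W$ are all symmetric, I may drop the transpose and write $U=WVW$.

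Next I would substitute $W=V^{-1}-S$ and telescope using $V^{-1}V=I$. Expanding
\[
U=(V^{-1}-S)V(V^{-1}-S)=(V^{-1}-S)(I-VS)=V^{-1}-2S+SVS,
\]
and then rewriting $V^{-1}=W+S$ gives the clean identity
\[
U=W+(SVS-S).
\]
By the triangle inequality for the max-entry norm this yields $\|U\|\le\|W\|+\|SVS-S\|=\|V^{-1}-S\|+\|SVS-S\|$, so the entire problem reduces to estimating the single correction term $SVS-S$.

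It then remains to bound $\|SVS-S\|$ entrywise. Since $S$ is diagonal, $(SVS)_{kl}=v_{kl}/(v_{kk}v_{ll})$; on the diagonal this equals $v_{kk}/v_{kk}^2=1/v_{kk}=S_{kk}$, so the diagonal entries of $SVS-S$ vanish. For $k\neq l$ the entry is $v_{kl}/(v_{kk}v_{ll})$, and the matrix-class bounds give $v_{kl}\le M$ together with the diagonal balance $v_{kk}=\sum_{j\neq k}v_{kj}\ge(n-1)m$ and likewise $v_{ll}\ge(n-1)m$. Consequently
\[
\|SVS-S\|\le\frac{M}{m^2(n-1)^2}\le\frac{2M}{m^2(n-1)^2},
\]
which, combined with the previous display, proves the claim.

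The step I expect to be the crux is the identification $\operatorname{cov}(\mathbf{d})=V$, because it is exactly what makes the telescoping $V^{-1}V=I$ available and collapses the sandwich $WVW^\top$ down to the single term $SVS-S$; without it the covariance would involve a matrix distinct from $V$ and the conclusion could not be stated purely in terms of $m$, $M$, and $n$. Once this identification is in hand, the rest is a direct entrywise computation relying only on the diagonal balance $v_{kk}\ge(n-1)m$ and the off-diagonal bound $v_{kl}\le M$. I note that the factor $2$ in the statement is slack, the argument in fact delivering the sharper constant $1$.
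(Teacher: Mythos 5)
Your argument is correct and is, in substance, the standard proof: the paper itself does not prove this lemma but merely defers to Proposition~1 of the cited 2015 reference, and your chain $U=W\operatorname{cov}(\mathbf{d})W^{\top}=WVW=W+(SVS-S)$ followed by the entrywise bound $|v_{kl}|/(v_{kk}v_{ll})\le M/\{m^{2}(n-1)^{2}\}$ for $k\neq l$ (and exact cancellation on the diagonal) is exactly the mechanism that proof uses; you even recover the sharper constant $1$ in place of $2$. The one point deserving caution is the step you yourself flag as the crux, the identification $\operatorname{cov}(\mathbf{d})=V$. In the exponential-family setting of the cited reference this is automatic (Fisher information equals the covariance of the sufficient statistic), and it is the only reading under which the lemma as stated is provable, so adopting it is reasonable. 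But in the present paper $V$ is used in the proof of Theorem~2 as the Jacobian of the moment equations, with off-diagonal entries $e^{\alpha_i^*+\alpha_j^*+\varepsilon_{ij}}(1+\partial\varepsilon_{ij}/\partial\alpha_j)$, whereas $\operatorname{cov}(d_i,d_j)=\operatorname{var}(a_{ij})=p_{ij}(1-p_{ij})$; these coincide for the logit link but not for general $\varepsilon$. Without that identification the telescoping $V(V^{-1}-S)=I-VS$ is unavailable and one is left with $W\Sigma W$ for a covariance matrix $\Sigma\neq V$, which does not reduce to the single correction term $SVS-S$ and yields a bound with extra factors of $M/m$. This is a defect of the lemma's statement (and of the paper's appeal to the 2015 result) rather than of your proof, but it should be recorded explicitly as a hypothesis: either $V=\operatorname{cov}(\mathbf{d})$, or an additional argument controlling $W(\Sigma-V)W$ is needed for the non-canonical links.
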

\begin{proof}
The proof of lemma \ref{lemma:asy} is similar to that of Proposition 1 in \cite{2015Asymptotic}, so we omit it. %Let $\tilde{V_{n}}= cov\{ \tilde{d}-
\end{proof}

\begin{proposition}\label{pro:1}
Assume that \\
(C1) $V:= \var(\mathbf{\tilde{d}}) \in \mathcal{L}_n(m, M)$; \\
(C2) $({\tilde{d}_{i} - \E(d_{i})})/{v_{ii}^{1/2}} $are asymptotically standard normal as $n\to \infty $. \\
If $M/m^2=o(n)$, then
for any fixed $k$, the first $k$ elements of $S(\mathbf{\tilde{d}}-\E (\mathbf{{d}}))$ are asymptotically normal distribution
with mean zero and the covariance is given by the upper $k\times k$ submatrix of the diagonal matrix
$B=diag(1/v_{11}, \ldots, 1/v_{nn})$, where $S$ is the approximate inverse of $V$ defined at \eqref{definition:sij}.
\end{proposition}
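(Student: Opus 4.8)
The plan is to reduce the assertion to a joint central limit theorem for a fixed number $k$ of coordinates and then to exploit the fact that, up to asymptotically negligible terms, these coordinates are functions of \emph{disjoint} families of independent random variables, so that their asymptotic independence comes for free. First I would make the normalization explicit. Since $S=\diag(v_{11}^{-1},\ldots,v_{nn}^{-1})$ by \eqref{definition:sij}, the $i$-th entry of $S(\mathbf{\tilde d}-\E(\mathbf d))$ equals $(\tilde d_i-\E d_i)/v_{ii}$, and $B^{-1/2}_{1:k}=\diag(v_{11}^{1/2},\ldots,v_{kk}^{1/2})$ maps it to $X_i:=(\tilde d_i-\E d_i)/v_{ii}^{1/2}$. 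Hence the claim ``asymptotically $N(0,B_{1:k})$'' is equivalent to
\begin{equation*}
(X_1,\ldots,X_k)^\top \xrightarrow{d} N(0,\mathbf{I}_k),\qquad X_i:=\frac{\tilde d_i-\E d_i}{v_{ii}^{1/2}}.
\end{equation*}
Condition (C2) already supplies the marginal limit $X_i\xrightarrow{d}N(0,1)$ for each fixed $i$, so the entire task is to upgrade these marginals to a joint limit with \emph{independent} components.

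The key step is a decomposition of each coordinate into a ``private'' and a ``shared'' part. Writing $\tilde d_i-\E d_i=\sum_{j\neq i}(a_{ij}-\E a_{ij})+e_i$, I split the edge sum according to whether the other endpoint lies among the first $k$ nodes:
\begin{equation*}
\tilde d_i-\E d_i = G_i+H_i,\quad G_i:=\sum_{j> k}(a_{ij}-\E a_{ij})+e_i,\quad H_i:=\sum_{\substack{j\le k\\ j\neq i}}(a_{ij}-\E a_{ij}).
\end{equation*}
For distinct $i,i'\le k$ the edge sets $\{\{i,j\}:j>k\}$ and $\{\{i',j\}:j>k\}$ are disjoint, and the noises $e_1,\ldots,e_k$ are independent of each other and of all edges; since the edges are independent Bernoulli variables, the variables $G_1,\ldots,G_k$ are \emph{mutually independent}. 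The shared part $H_i$ involves only the $\binom{k}{2}$ edges among the first $k$ nodes, a fixed finite number of variables, so $\var(H_i)\le (k-1)M$ by (C1).

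It then remains to show the shared parts are negligible after normalization. Using $v_{ii}=\sum_{j\neq i}v_{ij}\ge (n-1)m$ from the class $\mathcal{L}_n(m,M)$ at \eqref{eq1} together with $\var(H_i)\le (k-1)M$, one gets $\var\!\big(H_i/v_{ii}^{1/2}\big)\le (k-1)M/((n-1)m)=O(M/(nm))$. Because the off-diagonal entries $v_{ij}=\var(a_{ij})\le 1/4$ force $m\le 1/4$, the hypothesis $M/m^2=o(n)$ gives $M/(nm)=(M/m^2)(m/n)\le (M/m^2)/(4n)=o(1)$ and also $1/m\le M/m^2=o(n)$, so $v_{ii}\ge (n-1)m\to\infty$. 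Thus $H_i/v_{ii}^{1/2}\xrightarrow{p}0$ for each $i\le k$, and by Slutsky $G_i/v_{ii}^{1/2}=X_i-H_i/v_{ii}^{1/2}\xrightarrow{d}N(0,1)$. Since $G_1,\ldots,G_k$ are independent, the characteristic function of the vector $\big(G_1/v_{11}^{1/2},\ldots,G_k/v_{kk}^{1/2}\big)$ factorizes and converges to $\prod_{i=1}^k e^{-t_i^2/2}$, yielding joint convergence to $N(0,\mathbf{I}_k)$. Adding back the $o_p(1)$ residuals $H_i/v_{ii}^{1/2}$ through the multivariate Slutsky theorem gives $(X_1,\ldots,X_k)^\top\xrightarrow{d}N(0,\mathbf{I}_k)$, which is the assertion.

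The main obstacle is the middle step: recognizing the decomposition that isolates an \emph{exactly} independent family $\{G_i\}$ and controlling the residuals $H_i$ uniformly over $i\le k$. Once this structure is in place, the remaining work is only the $L^2$ bound on $H_i/v_{ii}^{1/2}$ — which is where the growth condition $M/m^2=o(n)$ is consumed and where $\mathcal{L}_n(m,M)$ enters through $v_{ii}\ge (n-1)m$ and $\max_{i\ne j}v_{ij}\le M$ — followed by two applications of Slutsky's theorem; the marginal normality itself is supplied gratis by (C2). This mirrors the route of Proposition~1 in \cite{2015Asymptotic}, with the noise terms $e_i$ absorbed harmlessly into the independent blocks $G_i$.
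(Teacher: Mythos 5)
Your proof is correct, and it takes a more self-contained route than the paper's. The paper disposes of this proposition in a few lines: it invokes Proposition 1 of \cite{yan2016undirectedrandom}, which already gives the joint asymptotic normality of the noiseless vector $\big(d_1-\E(d_1),\ldots,d_k-\E(d_k)\big)^\top$ with covariance $\diag(v_{11},\ldots,v_{kk})$ under $M/m^2=o(n)$, and then removes the noise contribution by a single Chebyshev bound, using the sub-Gamma second-moment estimate $\E e_i^2\le 8(\upsilon+c^2)$ together with $v_{ii}\to\infty$, so that $v_{ii}^{1/2}\big[S(\mathbf{\tilde d}-\E \mathbf{d})\big]_i=(d_i-\E(d_i))/v_{ii}^{1/2}+o_p(1)$. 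You instead reprove the joint CLT from scratch via the decomposition $\tilde d_i-\E(d_i)=G_i+H_i$ with disjoint edge families making $G_1,\ldots,G_k$ exactly independent, the $L^2$ bound $\var\big(H_i/v_{ii}^{1/2}\big)\le (k-1)M/((n-1)m)=o(1)$ (which is where $M/m^2=o(n)$ is spent), and the characteristic-function factorization. This is essentially the mechanism that underlies the cited external result, so nothing is conceptually new, but your version buys two things: it needs no appeal to the reference, and it actually uses hypothesis (C2) as stated --- for the noisy marginals $\tilde d_i$ --- by absorbing the $e_i$ into the independent blocks $G_i$, rather than handling the noise in a separate Chebyshev step with an explicit moment bound on the $e_i$. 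The only caveat is presentational: your conclusion $(X_1,\ldots,X_k)^\top\Rightarrow N(0,\mathbf{I}_k)$ should be read, as you note at the outset, as the statement that $\diag(v_{11}^{1/2},\ldots,v_{kk}^{1/2})$ applied to the first $k$ entries of $S(\mathbf{\tilde d}-\E \mathbf{d})$ is asymptotically standard normal, which is the intended meaning of ``covariance given by the upper $k\times k$ submatrix of $B$'' when the $v_{ii}$ diverge.
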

In Proposition 1 in \cite{yan2016undirectedrandom}, they show that if $M/m^{2}=o(n)$, then the vector $\big(d_{1} - \E(d_{1}),...,d_{r} - \E(d_{r}) \big)^{\top}$ is asymptotically normally distributed with mean zero and covariance matrix diag$(v_{11},...,v_{rr})$ for a fixed $r \geq 1$. Note that random variables $\{ e_i\}_{i=1}^n$ are mutually independent and distributed by
sub-gamma distributions with respective parameters $(\upsilon_{i},c_{i})(i=1,...,n)$. Recall that $\max_{i=1,...,n}\upsilon_{i} = \upsilon$ and $\max_{i=1,...,n}c_{i} = c$ for any $\tau > 0$, by Chebyshev's inequality, we have
\begin{equation*}
    \pr \left(|\frac{e_{i} }{v_{ii}}| > \tau \right) = \pr \left(|e_{i} | > \tau  v_{ii} \right)\leq\frac{\var (e_{i} )}{\tau^{2} (v_{ii})^{2}}
\end{equation*}
According to \cite{boucheron2013concentration}, we have $\E X^{2}\leq 8(\upsilon+c^{2})$. Then $\frac{\var (e_{i} )}{\tau^{2} (v_{ii})^{2}}   \leq \frac{8(\upsilon+c^{2}) )}{\tau^{2} (v_{ii})^{2}}$ holds.
If $M/m^{2}=o(n)$, we get
\begin{equation*}
    (v_{ii})^{1/2}\big[S({\tilde{d} - \E({d})})\big]_{i}=\frac{{d}_{i} - \E({d}_{i})}{(v_{ii})^{1/2}}+\frac{e_{i} }{v_{ii}}=\frac{{d}_{i}-\E({d}_{i})}{v_{ii}^{1/2}}+o_{p}(1)
\end{equation*}
 Therefore, for any fixed $k$, $({\tilde{d}_{i} - \E({d}_{i})})/{(v_{ii})^{1/2}}, i=1,...,k,$ are asymptotically independent and standard normal distributions.

\begin{proof}[Proof of Theorem \ref{theorem:2}]

Let $\widehat{\gamma}_{ij}=\widehat{\alpha}_i + \widehat{\alpha}_j -\alpha_{i}^{*}- \alpha_{j}^{*}$
and assume
\begin{equation}\label{equ:theorem:part2}
    \max_{i\neq j} |\hat{\gamma}_{ij}| = O\left(\frac{1}{n}e^{14Q_{n}+e^{Q_{n}}} \big(\sqrt{(n-1)\log(n-1)}+\sqrt{2\upsilon \log (2n)}+c\log (2n) \big) \right).
\end{equation}
For $i=1,\ldots,n$, by Taylor's expansion, we have
\begin{eqnarray*}
\tilde{d}_{i} - \E({d}_{i}) & = & \sum_{j\neq i} (e^{\widehat{\alpha}_{i}+\widehat{\alpha}_{j}+\varepsilon_{i,j}(\hat{\alpha}_{i},\hat{\alpha}_{j})}  - e^{\alpha_{i}^{*}+\alpha_{j}^{*}+\varepsilon_{i,j}(\alpha_{i}^{*},\alpha_{j}^{*})} ) \\
& = & \sum_{j\neq i} \Big[(e^{\alpha_{i}^{*}+\alpha_{j}^{*}+\varepsilon_{i,j}(\alpha_{i}^{*},\alpha_{j}^{*})})'(e^{\widehat{\alpha}_{i}+\widehat{\alpha}_{j}+\varepsilon_{i,j}(\widehat{\alpha}_{i},\widehat{\alpha}_{j})}  - e^{\alpha_{i}^{*}+\alpha_{j}^{*}+\varepsilon_{i,j}(\alpha_{i}^{*},\alpha_{j}^{*})} )\Big]
+ h_i,
\end{eqnarray*}
where $h_i=\frac{1}{2}\sum_{j\neq i} \left.(e^{\alpha_{i} + \alpha_{j} + \varepsilon_{i,j}(\alpha_{i} ,\alpha_{j})})''\right|_{\alpha_{i} + \alpha_j = \theta_{ij}}[(\widehat{\alpha}_{i} + \widehat{\alpha}_{j}) - ( \alpha_{i}^{*} + \alpha_{j}^{*}) ]^2$
and ${\theta}_{ij} = t_{ij}( \alpha_{i}^{*}+ \alpha_{j}^{*})
+ (1-t_{ij})(\widehat{\alpha}_i + \widehat{\alpha}_j)$, $0<t_{ij}<1$.
\vspace{2ex}

Writing the above expressions in matrices, we have
\begin{eqnarray*}
\mathbf{\tilde{d}} - \E \mathbf{{d}} &=& V(\bs{\widehat{\alpha}} - \bs{\alpha}) + \mathbf{h}.
\end{eqnarray*}
Equivalently,
\begin{eqnarray*}
\bs{\widehat{\alpha}} - \bs{\alpha} & = & V^{-1}(\mathbf{\tilde{d}} - \E \mathbf{d}) + V^{-1} \mathbf{h} \\
& = & S(\mathbf{\tilde{d}} - \E \mathbf{{d}}) + W( \mathbf{\tilde{d}} - \E \mathbf{d}) + V^{-1} \mathbf{h},
\end{eqnarray*}
where $\mathbf{h}=(h_1, \ldots, h_n)^{\top}$. Now that $\mu''({\theta}_{ij})=O(e^{4Q_{n}})$, then we get
\[
|h_i|\le \frac{1}{2}(n-1)e^{4Q_{n}}\widehat{\gamma}_{ij}^2,
\]
Therefore,
\begin{eqnarray*}
|(V^{-1} \mathbf{h})_i| & = & |(S\mathbf{h})_i| + |(W\mathbf{h})_i|
\\
& \le & \max_i \frac{|h_i|}{v_{ii}} + \|W\|\sum_i|h_i|
\\
& \le & O\left(\frac{3e^{4Q_{n}}\widehat{\gamma}_{ij}^2}{2m} +  \frac{ c_1M^2}{m^3(n-1)^2}\times \frac{1}{2}n(n-1)e^{4Q_{n}} \widehat{\gamma}_{ij}^2\right)
\\
&\le & O\left( \frac{m^2+C_3M^2}{2m^3}e^{4Q_{n}} \widehat{\gamma}_{ij}^2\right)
\\
& = & O \left(\frac{1}{n}e^{41Q_{n}+e^{Q_{n}}}\big(\sqrt{(n-1)\log(n-1)}+\sqrt{2\upsilon \log (2n)} + c  \log (2n)\big)^2 \right).
\end{eqnarray*}
If $\frac{1}{n}e^{41Q_{n}+e^{Q_{n}}}\big(\sqrt{(n-1)\log(n-1)}+\sqrt{2\upsilon \log (2n)} + c\log (2n)\big)^2=o(n^{1/2})$,
then, $(V^{-1} \mathbf{h})_i = o(n^{-1/2}),$
by lemma \ref{lemma:asy}, we have
\begin{equation*}
\begin{split}
&\var[W\{d_{i}-E(d_{i})\}+W\{e_{i}\}] \\
& =  U_{ii}+2\operatorname{cov}([W\{d_{i} - \E(d_{i})]_{i},W\{e_{i}]_{i}) + \var(\Sigma_{j}W_{ij}e_{j}) \\
& \le  O\left(\frac{e^{6Q_{n}}}{(n-1)^{2}}\right) + 2\Sigma_{j}w^{2}_{ij} \operatorname{cov} (d_{j} - \E(d_{j}),e_{j})+8(\upsilon+c^{2})n\|W\|^{2}\\
& \le  O \left(\frac{e^{6Q_{n}}}{(n-1)^{2}} + \frac{e^{6Q_{n}}8(\upsilon+c^{2})}{(n-1)^{3}} \right).
\end{split}
\end{equation*}
If $8e^{6Q_{n}}(\upsilon+c^{2}) = o(n^{1/2})$, by Chebyshev's inequality, we obtain that
\begin{equation*}
    \pr \left( \frac{\big[W\{\tilde{d} - \E({d})\}\big]_{i}}{n^{-1/2}}>\epsilon \right) \leq \frac{n \var [W\{\tilde{d} - \E({d})\}]_{i}}{\epsilon^{2}} =  o(n^{-1/2})
\end{equation*}
For arbitrarily given $\epsilon >0$, it shows that
\begin{equation}\label{equ:W}
    \big[W\{\tilde{d} - \E({d})\} \big]_{i} = o(n^{-1/2})
\end{equation}
By the first part of this theorem, \eqref{equ:theorem:part2} holds with probability approaching $1$.
Consequently, by \eqref{equ:W}, we have
\[
(\bs{\widehat{\alpha}} - \bs{\alpha}^*)_i = \big[S(\mathbf{\tilde{d}} - \E (\mathbf{{d}}) ) \big]_i + o_p(n^{-1/2}).
\]
Therefore, Theorem \ref{theorem:2} follows after Proposition \ref{pro:1}.
\end{proof}

\bibliographystyle{plainnat}
\bibliography{reference}

\begin{thebibliography}{43}
\providecommand{\natexlab}[1]{#1}
\providecommand{\url}[1]{\texttt{#1}}
\expandafter\ifx\csname urlstyle\endcsname\relax
  \providecommand{\doi}[1]{doi: #1}\else
  \providecommand{\doi}{doi: \begingroup \urlstyle{rm}\Url}\fi

\bibitem[Albert and Barab{\'a}si(2002)]{albert2002}
R{\'e}ka Albert and Albert-L{\'a}szl{\'o} Barab{\'a}si.
\newblock Statistical mechanics of complex networks.
\newblock \emph{Reviews of modern physics}, 74\penalty0 (1):\penalty0 47, 2002.

\bibitem[Bickel et~al.(2011)Bickel, Chen, Levina,
  et~al.]{degree3bickel2011method}
Peter~J Bickel, Aiyou Chen, Elizaveta Levina, et~al.
\newblock The method of moments and degree distributions for network models.
\newblock \emph{The Annals of Statistics}, 39\penalty0 (5):\penalty0
  2280--2301, 2011.

\bibitem[Blitzstein and Diaconis(2011)]{degree2blitzstein2011sequential}
Joseph Blitzstein and Persi Diaconis.
\newblock A sequential importance sampling algorithm for generating random
  graphs with prescribed degrees.
\newblock \emph{Internet Mathematics}, 6\penalty0 (4):\penalty0 489--522, 2011.

\bibitem[Boucheron et~al.(2013)Boucheron, Lugosi, and
  Massart]{boucheron2013concentration}
St{\'e}phane Boucheron, G{\'a}bor Lugosi, and Pascal Massart.
\newblock \emph{Concentration inequalities: A nonasymptotic theory of
  independence}.
\newblock Oxford university press, 2013.

\bibitem[Britton et~al.(2006)Britton, Deijfen, and
  Martin-L{\"o}f]{degree1britton2006generating}
Tom Britton, Maria Deijfen, and Anders Martin-L{\"o}f.
\newblock Generating simple random graphs with prescribed degree distribution.
\newblock \emph{Journal of Statistical Physics}, 124\penalty0 (6):\penalty0
  1377--1397, 2006.

\bibitem[Chatterjee et~al.(2011)Chatterjee, Diaconis, and
  Sly]{chatterjee2011random}
Sourav Chatterjee, Persi Diaconis, and Allan Sly.
\newblock Random graphs with a given degree sequence.
\newblock \emph{The Annals of Applied Probability}, pages 1400--1435, 2011.

\bibitem[Cutillo et~al.(2010)Cutillo, Molva, and Strufe]{Cutillo2010Privacy}
Leudo~Antonio Cutillo, Refik Molva, and Thorsten Strufe.
\newblock Privacy preserving social networking through decentralization.
\newblock In \emph{International Conference on Wireless On-demand Network
  Systems and Services}, 2010.

\bibitem[Dwork et~al.(2006)Dwork, McSherry, Nissim, and Smith]{Dwork2006}
Cynthia Dwork, Frank McSherry, Kobbi Nissim, and Adam Smith.
\newblock Calibrating noise to sensitivity in private data analysis.
\newblock In \emph{Theory of cryptography conference}, pages 265--284.
  Springer, 2006.

\bibitem[Erdos et~al.(1960)Erdos, R{\'e}nyi, et~al.]{Erd1959On}
Paul Erdos, Alfr{\'e}d R{\'e}nyi, et~al.
\newblock On the evolution of random graphs.
\newblock \emph{Publ. Math. Inst. Hung. Acad. Sci}, 5\penalty0 (1):\penalty0
  17--60, 1960.

\bibitem[Fan and Lu(2002)]{Fan2002Connected}
Chung Fan and Linyuan Lu.
\newblock Connected components in random graphs with given expected degree
  sequences.
\newblock \emph{Annals of Combinatorics}, 6\penalty0 (2):\penalty0 125--145,
  2002.

\bibitem[Fan et~al.(2020)Fan, Zhang, and Yan]{fan2020}
Yifan Fan, Huiming Zhang, and Ting Yan.
\newblock Asymptotic theory for differentially private generalized
  $\beta$-models with parameters increasing.
\newblock \emph{Statistics and Its Interface}, 13\penalty0 (3):\penalty0
  385--398, 2020.

\bibitem[Fienberg(2012)]{fienberg2012brief}
Stephen~E Fienberg.
\newblock A brief history of statistical models for network analysis and open
  challenges.
\newblock \emph{Journal of Computational and Graphical Statistics}, 21\penalty0
  (4):\penalty0 825--839, 2012.

\bibitem[Giné and Nickl(2015)]{Gin2015Mathematical}
Evarist Giné and Richard Nickl.
\newblock Mathematical foundations of infinite-dimensional statistical models.
\newblock 2015.
\newblock \doi{doi:10.1017/CBO9781107337862}.

\bibitem[Gragg and Tapia(1974)]{1974GRAGGt}
W.B. Gragg and R.A. Tapia.
\newblock Optimal error bounds for the newton-kantorovich theorem.
\newblock \emph{SIAM Journal on Numerical Analysis}, 11\penalty0 (1):\penalty0
  10--13, 1974.

\bibitem[Hay~M. and D.(2009)]{Hay2009}
Miklau~G. Hay~M., Li~C. and Jensen D.
\newblock Accurate estimation of the degree distribution of private networks.
\newblock In \emph{Ninth IEEE International Conference on Data Mining}, pages
  169--178. IEEE, 2009.

\bibitem[Hillar and Wibisono(2013)]{hillar2013maximum}
Christopher Hillar and Andre Wibisono.
\newblock Maximum entropy distributions on graphs.
\newblock \emph{\em Avaible at: \url{http://arxiv.org/abs/1301.3321}}, 2013.

\bibitem[Holland and Leinhardt(1981)]{Holland1981An}
Paul~W. Holland and Samuel Leinhardt.
\newblock An exponential family of probability distributions for directed
  graphs.
\newblock \emph{Journal of the American Statistical Association}, 76\penalty0
  (373):\penalty0 33--50, 1981.

\bibitem[Inusah and Kozubowski(2006)]{Inusah2006A}
Seidu Inusah and Tomasz~J. Kozubowski.
\newblock A discrete analogue of the laplace distribution.
\newblock \emph{Journal of Statal Planning and Inference}, 136\penalty0
  (3):\penalty0 1090--1102, 2006.

\bibitem[Karwa and Slavkovi{\'c}(2016)]{Karwa2016}
Vishesh Karwa and Aleksandra Slavkovi{\'c}.
\newblock Inference using noisy degrees: Differentially private $\beta $-model
  and synthetic graphs.
\newblock \emph{The Annals of Statistics}, 44\penalty0 (1):\penalty0 87--112,
  2016.

\bibitem[Lu and Miklau(2014)]{Lu2014Exponential}
Wentian Lu and Gerome Miklau.
\newblock Exponential random graph estimation under differential privacy.
\newblock In \emph{In proceedings of the 20th ACM SIGKDD international
  conference on Knowlege discovery and data mining}, 2014.

\bibitem[Luo and Qin(2022{\natexlab{a}})]{luo2021Ordered}
Jing Luo and Hong Qin.
\newblock Asymptotic in the ordered networks with a noisy degree sequence.
\newblock \emph{Journal of Systems Science and Complexity}, 35\penalty0
  (3):\penalty0 1137--1153, 2022{\natexlab{a}}.

\bibitem[Luo and Qin(2022{\natexlab{b}})]{luo2022asymptotic}
Jing Luo and Hong Qin.
\newblock Asymptotic in a class of network models with a difference private
  degree sequence.
\newblock \emph{Statistics and Its Interface}, 15\penalty0 (3):\penalty0
  383--397, 2022{\natexlab{b}}.

\bibitem[Luo et~al.(2020)Luo, Qin, and Wang]{luo2020asymptotic}
Jing Luo, Hong Qin, and Zhenghong Wang.
\newblock Asymptotic distribution in directed finite weighted random graphs
  with an increasing bi-degree sequence.
\newblock \emph{Acta Mathematica Scientia}, 40\penalty0 (2):\penalty0 355--368,
  2020.

\bibitem[Luo et~al.(2022)Luo, Liu, and Wang]{luo2021affiliation}
Jing Luo, Tour Liu, and Qiuping Wang.
\newblock Affiliation weighted networks with a differentially private degree
  sequence.
\newblock \emph{Statistical Papers}, 63:\penalty0 383--397, 2022.

\bibitem[Mccullagh and Nelder(1989)]{Mccullagh1989Generalized}
P.~Mccullagh and J.~A. Nelder.
\newblock \emph{Generalized Linear Models}.
\newblock Chapman and Hall, 1989.

\bibitem[Mosler(2017)]{Mosler2015}
Karl Mosler.
\newblock Ernesto estrada and philip a. knight (2015): A first course in
  network theory, oxford university press, 272 pp., {\textsterling}29.99, isbn
  9780198726463.
\newblock \emph{Statistical Papers}, 58\penalty0 (4):\penalty0 1283--1284, Dec
  2017.
\newblock ISSN 1613-9798.
\newblock \doi{10.1007/s00362-017-0961-1}.
\newblock URL \url{https://doi.org/10.1007/s00362-017-0961-1}.

\bibitem[Pr{\'e}kopa(1952)]{prekopa1952composed}
Andr{\'a}s Pr{\'e}kopa.
\newblock On composed poisson distributions, iv.
\newblock \emph{Acta Mathematica Academiae Scientiarum Hungarica}, 3\penalty0
  (4):\penalty0 317--325, 1952.

\bibitem[Rigollet and H{\"u}tter(2019)]{Rigollet2019}
Philippe Rigollet and Jan-Christian H{\"u}tter.
\newblock High dimensional statistics.
\newblock 2019.
\newblock URL \url{http://www-math.mit.edu/~rigollet/PDFs/RigNotes17.pdf}.

\bibitem[Rinaldo et~al.(2013)Rinaldo, Petrovi{\'c}, Fienberg,
  et~al.]{rinaldo2013maximum}
Alessandro Rinaldo, Sonja Petrovi{\'c}, Stephen~E Fienberg, et~al.
\newblock Maximum lilkelihood estimation in the $beta$-model.
\newblock \emph{The Annals of Statistics}, 41\penalty0 (3):\penalty0
  1085--1110, 2013.

\bibitem[Steutel and Van~Harn(2003)]{steutel2003infinite}
Fred~W Steutel and Klaas Van~Harn.
\newblock \emph{Infinite divisibility of probability distributions on the real
  line}.
\newblock CRC Press, 2003.

\bibitem[Vershynin(2018)]{Vershynin2018}
R.~Vershynin.
\newblock High-dimensional probability: An introduction with applications in
  data science.
\newblock 2018.

\bibitem[Yan and Xu(2013)]{yanxu2013}
Ting Yan and Jinfeng Xu.
\newblock A central limit theorem in the $\beta$-model for undirected random
  graphs with a diverging number of vertices.
\newblock \emph{Biometrika}, 100\penalty0 (2):\penalty0 519--524, 2013.

\bibitem[Yan et~al.(2015)Yan, Zhao, and Qin]{2015Asymptotic}
Ting Yan, Yunpeng Zhao, and Hong Qin.
\newblock Asymptotic normality in the maximum entropy models on graphs with an
  increasing number of parameters.
\newblock \emph{Journal of Multivariate Analysis}, 133:\penalty0 61--76, 2015.

\bibitem[Yan et~al.(2016)Yan, Qin, and Wang]{yan2016undirectedrandom}
Ting Yan, Hong Qin, and Hansheng Wang.
\newblock Asymptotics in undirected random graph models parameterized by the
  strengths of vertices.
\newblock \emph{Statistica Sinica}, 26:\penalty0 273--293, 2016.

\bibitem[Yan et~al.(2019)Yan, Jiang, Fienberg, and Leng]{Yan2019}
Ting Yan, Binyan Jiang, Stephen~E. Fienberg, and Chenlei Leng.
\newblock Statistical inference in a directed network model with covariates.
\newblock \emph{Journal of the American Statistical Association}, 114\penalty0
  (526):\penalty0 857--868, 2019.

\bibitem[Yuan et~al.(2011)Yuan, Lei, and Yu]{Yuan2011Personalized}
Mingxuan Yuan, Chen Lei, and Philip~S. Yu.
\newblock Personalized privacy protection in social networks.
\newblock \emph{Proceedings of the Vldb Endowment}, 4\penalty0 (2):\penalty0
  141--150, 2011.

\bibitem[Zhang and Chen(2021)]{zhang2020concentration}
Huiming Zhang and Song~Xi Chen.
\newblock Concentration inequalities for statistical inference.
\newblock \emph{Communications in Mathematical Research}, 37\penalty0
  (1):\penalty0 1--85, 2021.

\bibitem[{Zhang} and {Jia}(2022)]{zhang2022elastic}
Huiming {Zhang} and Jinzhu {Jia}.
\newblock Elastic-net regularized high-dimensional negative binomial
  regression: Consistency and weak signals detection.
\newblock \emph{Statistica Sinica}, 32:\penalty0 181--207, 2022.

\bibitem[Zhang and Li(2016)]{zhang2016characterizations}
Huiming Zhang and Bo~Li.
\newblock Characterizations of discrete compound poisson distributions.
\newblock \emph{Communications in Statistics-Theory and Methods}, 45\penalty0
  (22):\penalty0 6789--6802, 2016.

\bibitem[Zhang and Wei(2022)]{zhang2021sharper}
Huiming Zhang and Haoyu Wei.
\newblock Sharper sub-weibull concentrations.
\newblock \emph{Mathematics}, 10\penalty0 (13):\penalty0 2252, 2022.

\bibitem[Zhang et~al.(2014)Zhang, Liu, and Li]{zhang2014notes}
Huiming Zhang, Yunxiao Liu, and Bo~Li.
\newblock Notes on discrete compound poisson model with applications to risk
  theory.
\newblock \emph{Insurance: Mathematics and Economics}, 59:\penalty0 325--336,
  2014.

\bibitem[Zhao et~al.(2012)Zhao, Levina, and Zhu]{degree4zhao2012consistency}
Yunpeng Zhao, Elizaveta Levina, and Ji~Zhu.
\newblock Consistency of community detection in networks under degree-corrected
  stochastic block models.
\newblock \emph{The Annals of Statistics}, 40\penalty0 (4):\penalty0
  2266--2292, 2012.

\bibitem[Zhou et~al.(2008)Zhou, Pei, and Luk]{Zhou2008A}
Bin Zhou, Jian Pei, and Wo~Shun Luk.
\newblock A brief survey on anonymization techniques for privacy preserving
  publishing of social network data.
\newblock \emph{Acm Sigkdd Explorations Newsletter}, 10\penalty0 (2):\penalty0
  12--22, 2008.

\end{thebibliography}
\newpage

\begin{figure}[H]
\centering
\includegraphics[width=0.90\textwidth]{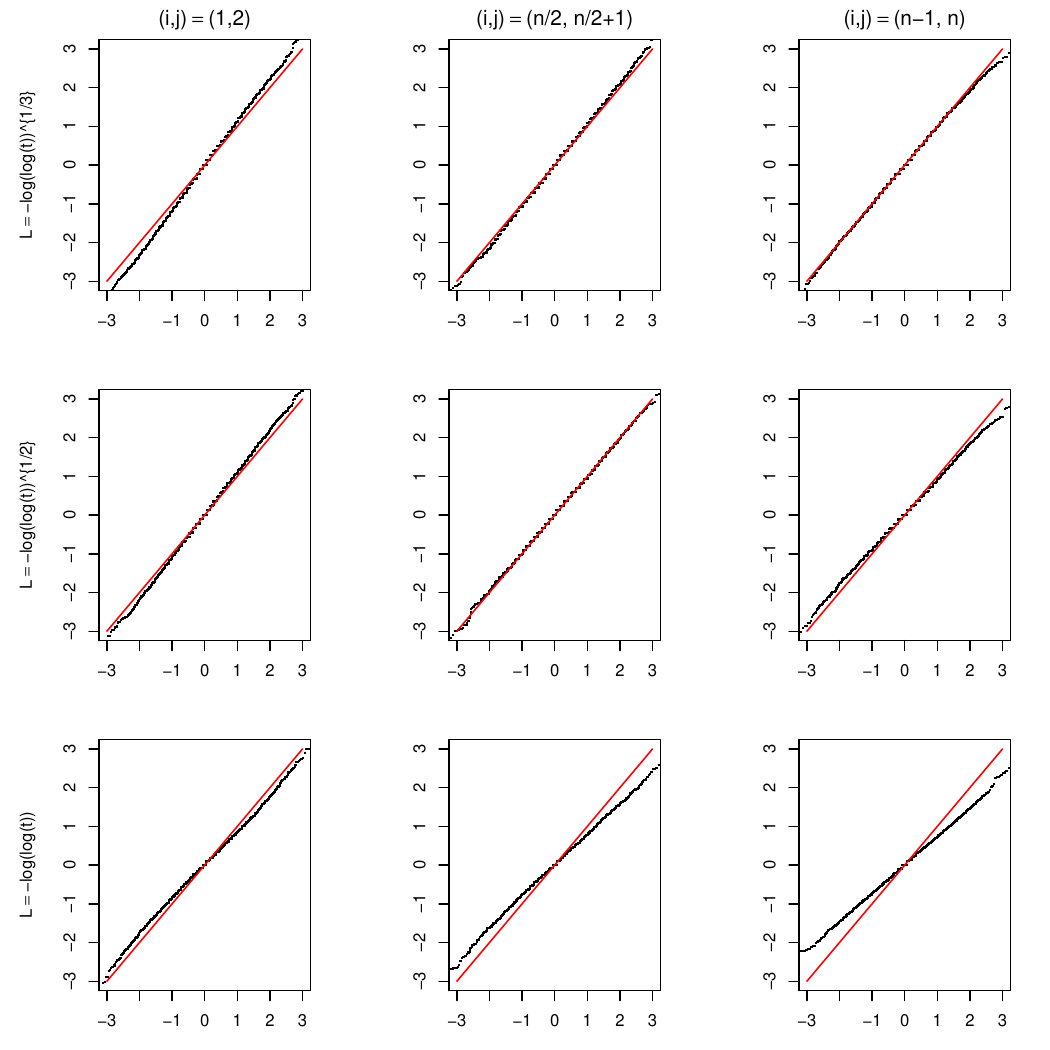}
\caption{In case of model $M_{log}$, the QQ plots of $\xi_{ij}$ with red color for $\widehat{\xi}_{ij}$ ($n=100~~ and~~ \epsilon = 2$). }
\label{Fig1:log}
\end{figure}

\begin{figure}[H]
\centering
\includegraphics{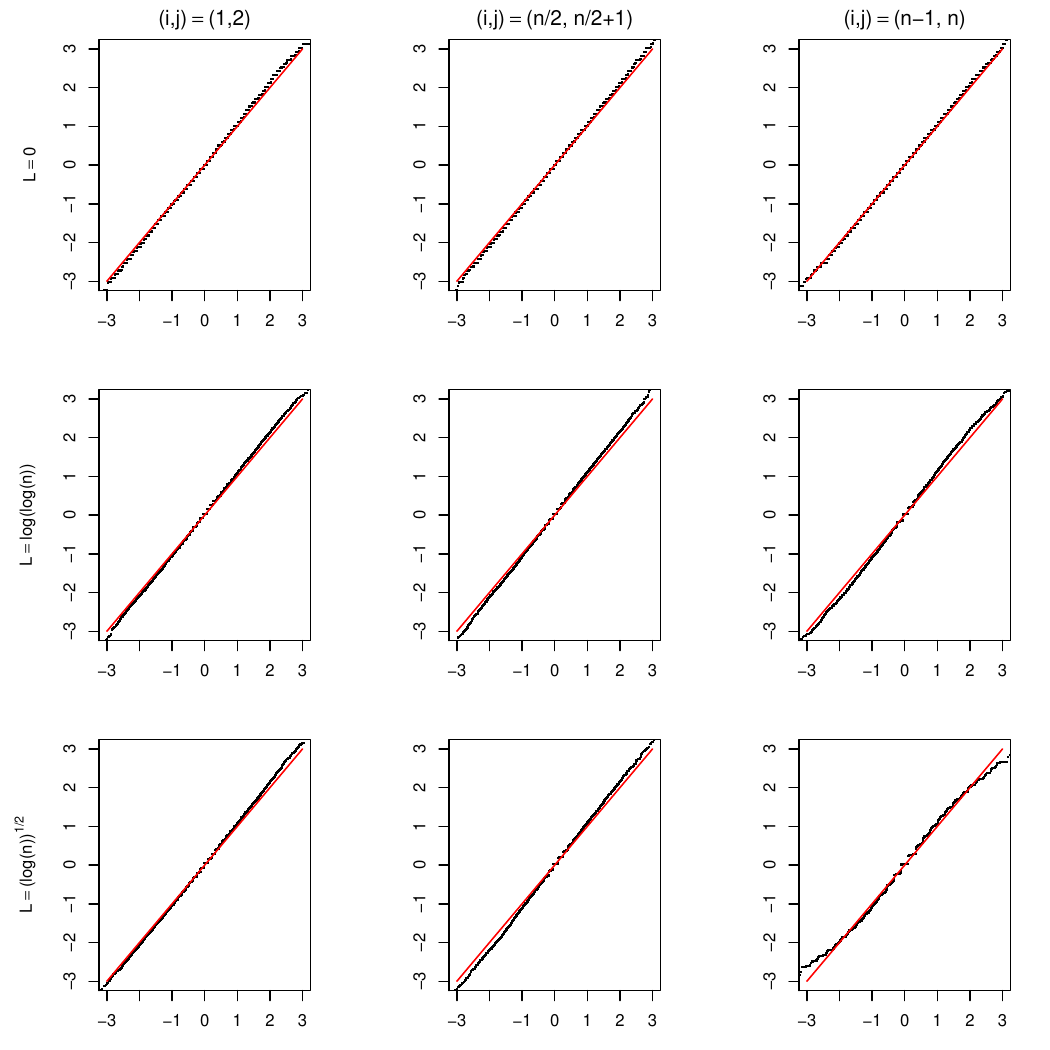}
\caption{In case of model $M_{logit}$, the QQ plots of $\xi_{ij}$ with red color for $\widehat{\xi}_{ij}$ ($n=100~~ and~~ \epsilon = 2$). }
\label{Fig2:logit}
\end{figure}

\begin{figure}[H]
\centering
\includegraphics{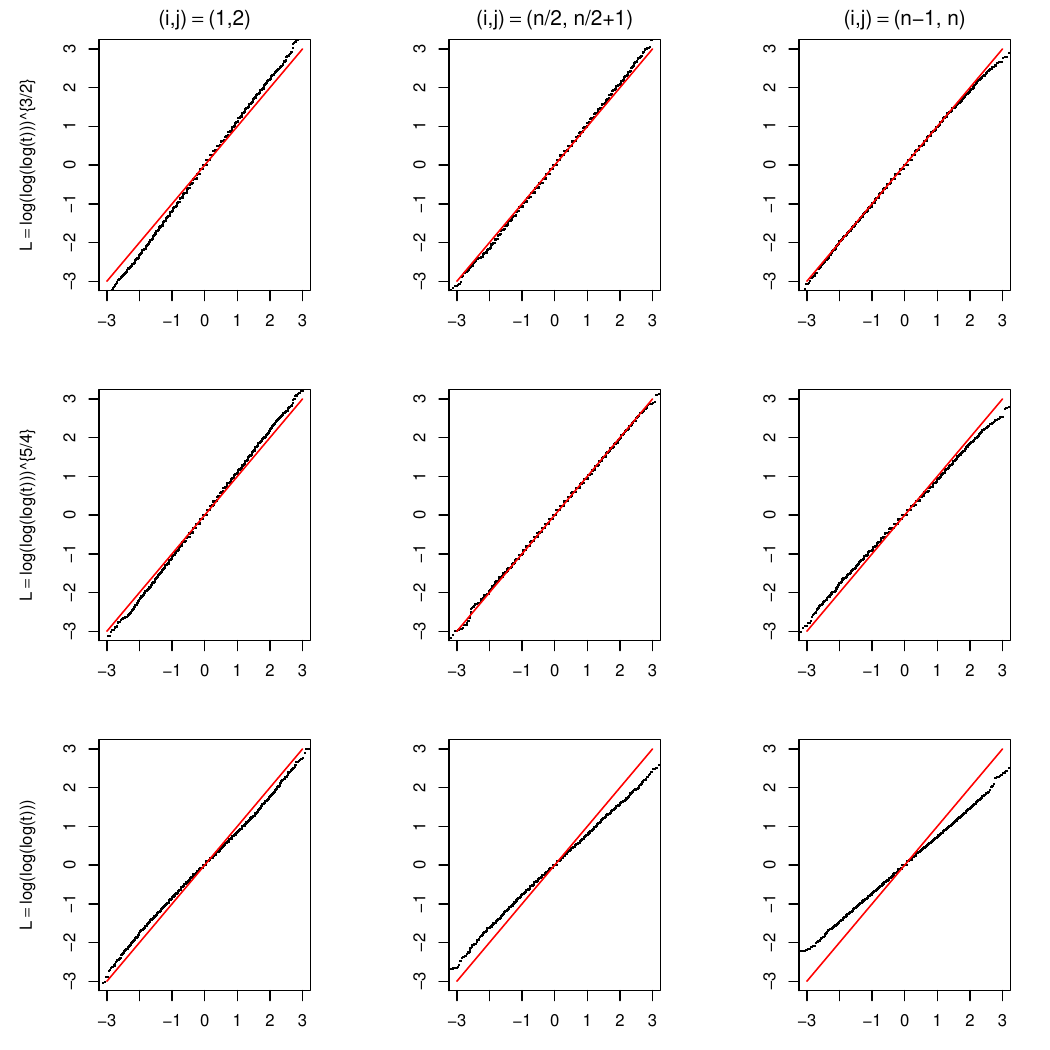}
\caption{In case of model $M_{cloglog}$, the QQ plots of $\xi_{ij}$ with red color for $\widehat{\xi}_{ij}$ ($n=100~~ and~~ \epsilon = 2$). }
\label{Fig3:cloglog}
\end{figure}

\begin{table}[htbp]
  \centering
  %\scriptsize  -\log(\log(t))^{1/3},-log(log(t))^{1/2},-log(log(t)), -log(t)
  \caption{In case of $M_{log}$, estimated coverage probabilities of $\alpha_i-\alpha_j$ for pair $(i,j)$  as well as  the length of confidence intervals , and the probabilities that the parameter estimator does not exist, multiplied by $100$. }
\begin{tabular}{llccc}
  \hline
    \multicolumn{1}{p{4.055em}}{$n$} & \multicolumn{1}{p{4.055em}}{$(i,j)$} &$-\log(\log(t))^{1/3}$    & $-\log(\log(t))^{1/2}$    & $-\log(\log(t))$  \\
   \hline
          &       &  $a_1=0.01$, $a_2=(\Lambda-0.01)/4,m=2$   & \multicolumn{1}{r}{} & \multicolumn{1}{r}{} \\
    \hline
    100   & (1,2) & 98.64/0.41/0 & 98.69/0.43/0.22 & 96.42/0.41/3.34 \\
          & (50,51) & 96.10/0.55/0 & 96.13/0.59/0.22 & 92.59/0.60/3.34 \\
          & (99,100) & 94.14/0.73/0 & 93.67/0.80/0.22 & 91.04/0.90/3.34 \\
    \hline
    200   & (1,2) & 98.98/0.30/0 & 99.00/0.30/0 & 96.53/0.28/0.14 \\
          & (100,101) & 96.94/0.40/0 & 96.12/0.42/0 & 91.86/0.43/0.14 \\
          & (199,200) & 95.58/0.54/0 & 94.58/0.58/0 & 88.96/0.65/0.14 \\
    \hline
          &       & $a_1=\Lambda-0.01$, $a_2=0.025$,$m=2$   & \multicolumn{1}{r}{} & \multicolumn{1}{r}{} \\
    \hline
    100   & (1,2) & 98.68/0.41/0 & 98.72/0.44/0.08 & 96.52/0.42/2.42 \\
          & (50,51) & 95.82/0.55/0 & 96.32/0.59/0.08 & 92.60/0.61/2.42 \\
          & (99,100) & 94.10/0.73/0 & 93.55/0.80/0.08 & 90.65/0.90/2.42 \\
    \hline
    200   & (1,2) & 99.08/0.30/0 & 98.26/0.30/0 & 96.74/0.28/0.06 \\
          & (100,101) & 97.42/0.40/0 & 96.30/0.42/0 & 91.61/0.43/0.06 \\
          & (199,200) & 95.62/0.54/0 & 95.40/0.57/0 & 88.69/0.65/0.06 \\
    \hline
          &       & $a_1=4*\Lambda/5$, $a_2=\Lambda/5$,$m=2$   & \multicolumn{1}{r}{} & \multicolumn{1}{r}{} \\
    \hline
    100   & (1,2) & 98.60/0.42/0.4 & 98.74/0.44/0.1 & 96.53/0.42/3.16 \\
          & (50,51) & 96.26/0.55/0.04 & 96.06/0.59/0.1 & 96.27/0.61/3.16 \\
          & (99,100) & 93.70/0.73/0.04 & 94.65/0.80/0.1 & 90.27/0.90/3.16 \\
   \hline
    200   & (1,2) & 98.82/0.30/0 & 98.86/0.30/0 & 96.43/0.28/0.05 \\
          & (100,101) & 97.16/0.40/0 & 96.50/0.42/0 & 92.53/0.43/0.05 \\
          & (199,200) & 95.28/0.54/0 & 94.48/0.58/0 & 88.36/0.65/0.05 \\
    \hline
    \end{tabular}%
  \label{tab:log:1}%
\end{table}%

\begin{table}[htbp]
  \centering
  %\scriptsize
  \caption{In case of $M_{logit}$, estimated coverage probabilities of $\alpha_i-\alpha_j$ for pair $(i,j)$  as well as  the length of confidence intervals , and the probabilities that the parameter estimator does not exist, multiplied by $100$. }
\begin{tabular}{llccc}
 \hline
    \multicolumn{1}{p{4.055em}}{$n$} & \multicolumn{1}{p{4.055em}}{$(i,j)$} & $0$    & $\log(log(n))$    & $\log(n)^{1/2}$ \\
    \hline
          &       & $a_1=0.01$, $a_2=(\Lambda-0.01)/4,m=2$   & \multicolumn{1}{r}{} & \multicolumn{1}{r}{} \\
    \hline
    100   & (1,2) & 93.02/0.60/0 & 92.51/0.63/1.9 & 92.07/0.68/44.24 \\
          & (50,51) & 93.16/0.60/0 & 91.76/0.76/1.9 & 91.60/0.94/44.24 \\
          & (99,100) & 92.98/0.60/0 & 90.74/1.03/1.9 & 96.10/1.63/44.24 \\
     \hline
    200   & (1,2) & 93.94/0.40/0 & 93.48/0.48/0.024 & 93.60/0.48/14.00 \\
          & (100,101) & 93.56/0.40/0 & 93.02/0.55/0.04 & 92.49/0.68/14.00 \\
          & (199,200) & 94.16/0.40/0 & 92.86/0.75/0.04 & 94.21/1.12/14.00 \\
     \hline
          &       &$a_1=\Lambda-0.01$, $a_2=0.025$,$m=2$   & \multicolumn{1}{r}{} & \multicolumn{1}{r}{} \\
     \hline
    100   & (1,2) & 93.10/0.58/0 & 92.45/0.63/1.5 & 92.50/0.68/45.1 \\
          &(50,51)& 93.00/0.58/0 & 91.53/0.76/1.5 & 91.51/0.94/45.1 \\
          & (99,100) & 93.22/0.58/0 & 91.15/1.02/1.5 & 95.70/1.60/45.1 \\
     \hline
    200   & (1,2) & 94.04/0.40/0 & 93.86/0.45/0.03 & 93.76/0.48/11.98 \\
          & (100,101) & 94.25/0.40/0 & 93.80/0.55/0.03 & 92.69/0.68/11.98 \\
          & (199,200) & 94.15/0.40/0 & 92.51/0.76/0.03 & 95.02/1.12/11.98 \\
     \hline
          &       & $a_1=4*\Lambda/5$, $a_2=\Lambda/5$,$m=2$   & \multicolumn{1}{r}{} & \multicolumn{1}{r}{} \\
     \hline
    100   & (1,2)& 92.48/0.58/0 & 92.27/0.63/1.62 & 92.46/0.68/44.58 \\
          &(50,51)& 92.64/0.58/0 & 90.93/0.76/1.62 & 90.47/0.94/44.58 \\
          & (99,100) & 92.88/0.58/0 & 90.28/1.03/1.62 & 95.23/1.59/44.58 \\
     \hline
    200   & (1,2) & 93.81/0.40/0 & 93.50/0.45/0 & 93.80/0.48/12.67 \\
          & (100,101) & 94.00/0.40/0 & 93.25/0.55/0 & 92.61/0.68/12.67 \\
          & (199,200) & 94.27/0.40/0 & 92.51/0.75/0 & 95.00/1.12/12.67 \\
     \hline
    \end{tabular}%
  \label{tab:logit:2}%
\end{table}%

\newpage
\begin{table}[htbp]
  \centering
  %\scriptsize  log(log(log(t)))^{3/2},log(log(log(t)))^{5/4},log(log(log(t))), log(log(t))
  \caption{In case of $M_{cloglog}$, estimated coverage probabilities of $\alpha_i-\alpha_j$ for pair $(i,j)$  as well as  the length of confidence intervals , and the probabilities that the parameter estimator does not exist, multiplied by $100$. }
\begin{tabular}{llccc}
  \hline
    \multicolumn{1}{p{4.055em}}{$n$} & \multicolumn{1}{p{4.055em}}{$(i,j)$} & $log(log(log(t)))^{3/2}$    & $log(log(log(t)))^{5/4}$    & $log(log(log(t)))$ \\
   \hline
          &       & $a_1=0.01$, $a_2=(\Lambda-0.01)/4,m=2$   & \multicolumn{1}{r}{} & \multicolumn{1}{r}{} \\
    \hline
    100   & (1,2) & 86.86/0.47/0 & 89.21/0.48/0 & 91.24/0.50/0 \\
          & (50,51)& 88.16/0.45/0 & 91.52/0.47/0 & 93.50/0.50/0 \\
          & (99,100) & 89.46/0.45/0 & 92.70/0.47/0 & 96.06/0.52/0 \\
    \hline
    200   & (1,2) & 91.58/0.34/0 & 92.42/0.35/0 & 97.08/0.36/0 \\
          & (100,101) & 93.63/0.34/0 & 95.38/0.36/0 & 98.54/0.37/0 \\
          & (199,200) & 95.66/0.34/0 & 96.78/0.38/0 & 99.18/0.41/0 \\
    \hline
          &       & $a_1=\Lambda-0.01$, $a_2=0.025$,$m=2$   & \multicolumn{1}{r}{} & \multicolumn{1}{r}{} \\
    \hline
    100   &(1,2) & 86.64/0.47/0 & 89.33/0.45/0 & 91.34/0.45/0 \\
          & (50,51) & 88.06/0.48/0 & 91.46/0.47/0 & 93.52/0.47/0 \\
          & (99,100) & 89.54/0.50/0 & 92.84/0.50/0 & 95.90/0.52/0 \\
    \hline
    200   &(1,2) & 91.48/0.34/0 & 92.22/0.35/0 & 97.16/0.36/0 \\
          & (100,101) & 93.88/0.34/0 & 95.66/0.36/0 & 98.66/0.37/0 \\
          & (199,200) & 95.40/0.34/0 & 96.78/0.38/0 & 99.24/0.41/0 \\
    \hline
          &       &  $a_1=4*\Lambda/5$, $a_2=\Lambda/5$,$m=2$  & \multicolumn{1}{r}{} & \multicolumn{1}{r}{} \\
   \hline
    100   & (1,2) & 86.02/0.47/0 & 88.72/0.48/0 & 91.02/0.50/0 \\
          & (50,51) & 88.32/0.45/0 & 91.18/0.47/0 & 93.56/0.50/0 \\
          & (99,100) & 89.72/0.45/0 & 92.22/0.47/0 & 95.72/0.52/0 \\
    \hline
    200   &(1,2) & 91.42/0.34/0 & 92.06/0.35/0 & 96.70/0.36/0 \\
          & (100,101) & 93.82/0.34/0 & 95.54/0.36/0 & 98.66/0.37/0 \\
          & (199,200) & 95.18/0.34/0 & 96.94/0.38/0 & 99.12/0.41/0 \\
    \hline
    \end{tabular}%
  \label{tab:cloglog:3}%
\end{table}

\begin{table}[h!]\centering
%\scriptsize
%\small
\footnotesize
\caption{The Bruce Kapferer network dataset: In case of $M_{log}$, the parameter estimator $\widetilde{\alpha}_{log}$ in model $M_{log}$, $\widetilde{\alpha}_{log}$,$95\%$confidence intervals(in square brackets) and their standard errors(in parentheses) .}
\label{table:3}
\begin{tabular}{cccccc}
\hline
Vertex     & degree & $\widetilde{\alpha}_{log}$   & Vertex     &  degree& $\widetilde{\alpha}_{log}$    \\
\hline
  \multicolumn{6}{c}{In case of $M_{log}$ and $\epsilon =2$}\\
\hline
    1     & -2.50[-3.63,-0.76](0.73) & 2     & 20    & -0.59[-1.24,0.06](0.33) & 10 \\
    2     & -1.28[-2.19,-0.37](0.47) & 5     & 21    &-1.28[-2.19,-0.37](0.47) & 5 \\
    3     & -0.49[-1.11,0.13](0.32) & 11    & 22    & -0.49[-1.11,0.13](0.32) & 11 \\
    4     & -0.49[-1.11,0.13](0.32) & 11    & 23    & -2.50[-3.63,-0.76](0.73) & 2\\
    5     & -1.79[-2.96,-0.62](0.60) & 3     & 24    & -2.50[-3.63,-0.76](0.73) & 2\\
    6     & -1.10[-1.93,-0.27](0.42) & 6     & 25    & -1.79[-2.96,-0.62](0.60) & 3 \\
    7     & -1.10[-1.93,-0.27](0.42) & 6     & 26    & -0.69[-1.38,0.01](0.35) & 9 \\
    8     & -1.79[-2.96,-0.62](0.60) & 3     & 27    & -0.69[-1.38,0.01](0.35) & 9 \\
    9     & -0.69[-1.38,0.01](0.35) & 9     & 28    & -0.59[-1.24,0.06](0.33) & 10 \\
    10    & -2.89[-4.91,-0.87](1.03) & 1     & 29    &-1.10[-1.93,-0.27](0.42) & 6 \\
    11    & -0.18[-0.72,0.35](0.27) & 15    & 30    & -0.12[-0.64,0.40](0.26) & 16 \\
    12    & -0.25[-0.80,0.30](0.28) & 14    & 31    & -0.59[-1.24,0.06](0.33) & 10 \\
    13    & -0.59[-1.24,0.06](0.33) & 10    & 32    & -0.12[-0.64,0.40](0.26) & 16 \\
    14    & -0.81[-1.53,-0.09](0.37) & 8     & 33    & -1.10[-1.93,-0.27](0.42)& 6 \\
    15    & -1.28[-2.19,-0.37](0.47) & 5     & 34    & -0.59[-1.24,0.06](0.33) & 10\\
    16    & 0.37[-0.05,0.78](0.21) & 26    & 35    & -1.10[-1.93,-0.27](0.42) & 6 \\
    17    & -0.94[-1.72,-0.17](0.39) & 7     & 36    & -0.69[-1.38,0.01](0.35) & 9 \\
    18    & -0.06[-0.56,0.45](0.26) & 17    & 37    &-1.28[-2.19,-0.37](0.47) & 5 \\
    19    & -2.89[-4.91,-0.87](1.03) & 1     &       &       &  \\
  \hline
    \end{tabular}%
  \label{tab:example:log}%
\end{table}%

%
% Table generated by Excel2LaTeX from sheet 'Sheet1'
\begin{table}[h!]\centering
%\scriptsize
%\small
\footnotesize
\caption{The Bruce Kapferer network dataset: In case of $M_{logit}$, the parameter estimator $\widetilde{\alpha}_{logit}$ in model $M_{logit}$, $\widetilde{\alpha}_{logit}$,$95\%$confidence intervals(in square brackets) and their standard errors(in parentheses) .}
\label{table:3}
\begin{tabular}{cccccc}
\hline
Vertex     & degree & $\widetilde{\alpha}_{logit}$   & Vertex     &  degree& $\widetilde{\alpha}_{logit}$    \\
\hline
  \multicolumn{6}{c}{In case of $M_{logit}$ and $\epsilon =2$}\\
\hline
    1     & -2.52[-4.02,-1.03](0.76) & 2     & 20    & -0.32[-1.12,0.49](0.41) & 10 \\
    2     & -1.38[-2.39,-0.36](0.52) & 5     & 21    & -1.38[-2.39,-0.36](0.52) & 5 \\
    3     & -0.14[-0.93,0.64](0.40) & 11    & 22    & -0.14[-0.93,0.64](0.40) & 11 \\
    4     & -0.14[-0.93,0.64](0.40) & 11    & 23    & -2.52[-4.02,-1.03](0.76) & 2 \\
    5     & -2.04[-3.29,-0.78](0.64) & 3     & 24    & -2.52[-4.02,-1.03](0.76) & 2 \\
    6     & -1.12[-2.07,-0.17](0.48) & 6     & 25    & -2.04[-3.29,-0.78](0.64) &3\\
    7     & -1.12[-2.07,-0.17](0.48) & 6     & 26    & -0.50[-1.32,0.33](0.42) & 9 \\
    8     & -2.04[-3.29,-0.78](0.64) & 3     & 27    &-0.50[-1.32,0.33](0.42) & 9 \\
    9     & -0.50[-1.32,0.33](0.42) & 9     & 28    & -0.32[-1.12,0.49](0.41) & 10 \\
    10    & -3.30[-5.35,-1.26](1.04) & 1     & 29    & -1.12[-2.07,-0.17](0.48) & 6 \\
    11    & 0.47[-0.26,1.21](0.38) & 15    & 30    & 0.62[-0.11,1.35](0.37)&16 \\
    12    & 0.33[-0.42,1.07](0.38) & 14    & 31    & -0.32[-1.12,0.49](0.41) & 10 \\
    13    & -0.32[-1.12,0.49](0.41) & 10    & 32    & 0.62[-0.11,1.35](0.37) & 16 \\
    14    & -0.69[-1.55,0.17](0.44) & 8     & 33    & -1.12[-2.07,-0.17](0.48) & 6 \\
    15    & -1.38[-2.39,-0.36](0.52) & 5     & 34    & -0.32[-1.12,0.49](0.41) & 10 \\
    16    & 2.10[1.29,2.91](0.41) & 26    & 35    & -1.12[-2.07,-0.17](0.48) & 6 \\
    17    & -0.89[-1.79,0.00](0.46) & 7     & 36    & -0.50[-1.32,0.33](0.42) & 9 \\
    18    & 0.76[0.03,1.49](0.37) & 17    & 37    & -1.38[-2.39,-0.36](0.52) & 5 \\
    19    & -3.30[-5.35,-1.26](1.04) & 1     &       &       &  \\
    \hline
    \end{tabular}%
  \label{tab:example:logit}%
\end{table}%

\newpage	

\begin{table}[h!]\centering
%\scriptsize
%\small
\footnotesize
\caption{The Bruce Kapferer network dataset: In case of $M_{cloglog}$, the parameter estimator $\widetilde{\alpha}_{cloglog}$ in model $M_{cloglog}$, $\widetilde{\alpha}_{cloglog}$,$95\%$confidence intervals(in square brackets) and their standard errors(in parentheses) .}
\label{table:3}
\begin{tabular}{cccccc}
\hline
Vertex     & degree & $\widetilde{\alpha}_{cloglog}$   & Vertex     &  degree& $\widetilde{\alpha}_{cloglog}$    \\
\hline
  \multicolumn{6}{c}{In case of $M_{cloglog}$ and $\epsilon =2$}\\
\hline
    1     & -1.19[-2.25,-0.14](0.54) & 2     & 20    & -0.17[-0.85,0.52](0.35)& 10 \\
    2     & -0.70[-1.50,0.09](0.40) & 5     & 21    & -0.70[-1.50,0.09](0.40)& 5\\
    3     & -0.07[-0.75,0.60](0.34) & 11    & 22    & -0.07[-0.75,0.60](0.34)& 11 \\
    4     & -0.07[-0.75,0.60](0.34) & 11    & 23    & -1.19[-2.25,-0.14](0.54) & 2 \\
    5     & -0.10[-1.93,-0.07](0.47) & 3     & 24    & -1.19[-2.25,-0.14](0.54) & 2 \\
    6     & -0.58[-1.34,0.16](0.39) & 6     & 25    & -0.10[-1.93,-0.07](0.47) & 3 \\
    7     & -0.58[-1.34,0.16](0.39) & 6     & 26    & -0.26[-0.96,0.43](0.35) & 9 \\
    8     & -0.10[-1.93,-0.07](0.47) & 3     & 27    & -0.26[-0.96,0.43](0.35) & 9 \\
    9     & -0.26[-0.96,0.43](0.35) & 9     & 28    & -0.17[-0.85,0.52](0.35) & 10 \\
    10    & -1.48[-2.81,-0.15](0.68) & 1     & 29    &-0.58[-1.34,0.16](0.39) & 6 \\
    11    & 0.26[-0.40,0.93](0.34) & 15    & 30    & 0.34[-0.33,1.01](0.34) & 16 \\
    12    & 0.18[-0.48,0.85](0.34) & 14    & 31    & -0.17[-0.85,0.52](0.35) & 10 \\
    13    & -0.17[-0.85,0.52](0.35) & 10    & 32    &0.34[-0.33,1.01](0.34) & 16\\
    14    & -0.36[-1.07,0.35](0.36) & 8     & 33    & -0.58[-1.34,0.16](0.39) & 6 \\
    15    & -0.70[-1.50,0.09](0.40) & 5     & 34    & -0.17[-0.85,0.52](0.35) & 10\\
    16    & 1.05[0.31,1.79](0.38) & 26    & 35    & -0.58[-1.34,0.16](0.39) & 6 \\
    17    & -0.47[-1.20,0.26](0.37) & 7     & 36    &-0.26[-0.96,0.43](0.35) & 9 \\
    18    & 0.42[-0.25,1.09](0.34) & 17    & 37    & -0.70[-1.50,0.09](0.40) & 5 \\
    19    & -1.48[-2.81,-0.15](0.68) & 1     &       &       &  \\
\hline
    \end{tabular}%
  \label{tab:example:cloglog}%
\end{table}%

\begin{figure}[!htp]
\centering
\caption{The scatter plots ($n=39,\epsilon =2$). The $\bar{d}$ denotes the noisy degree sequences and $\widehat{\alpha}$ denotes the corresponding the parameter estimation.}
\label{Fig4}
\includegraphics[width=0.90\textwidth]{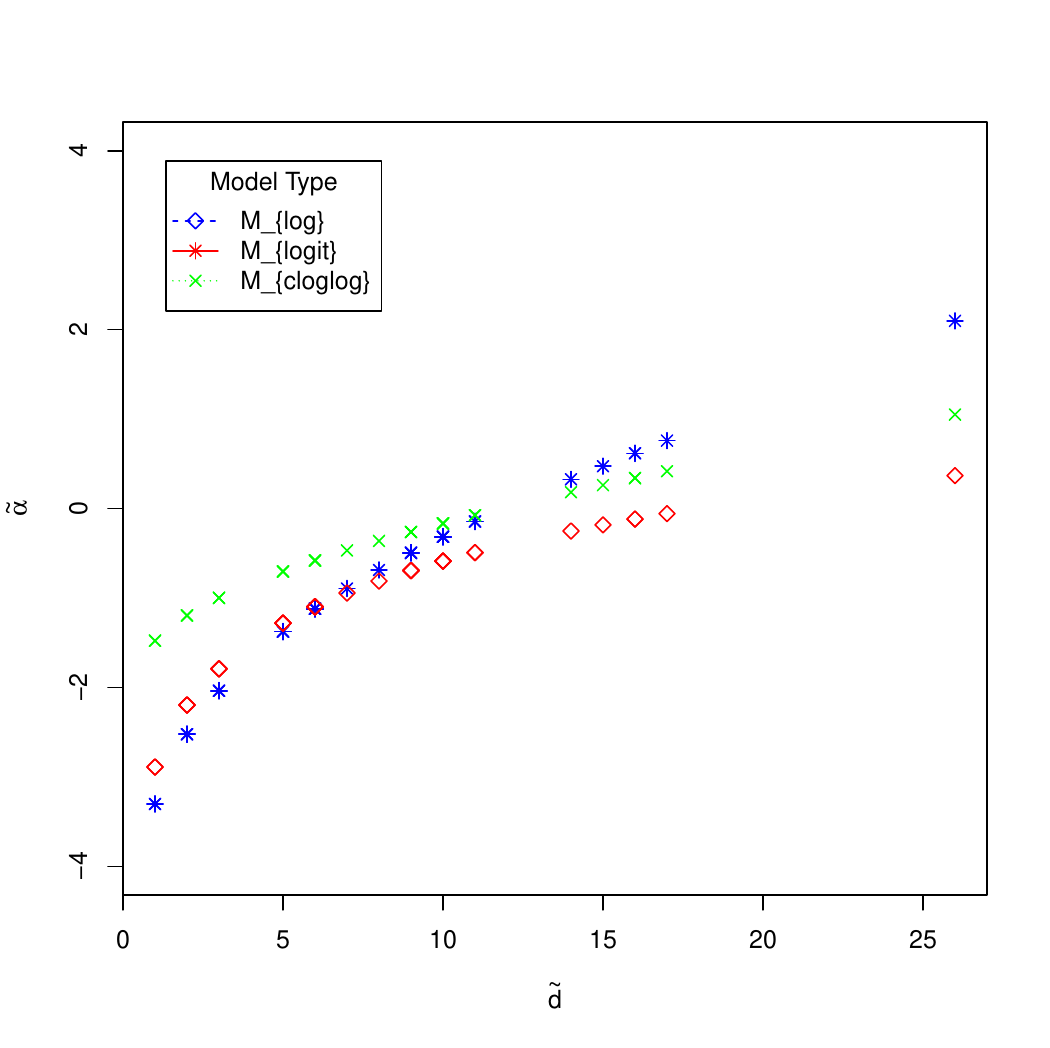}
\end{figure}

\end{document}